\documentclass{amsart}
\usepackage{amssymb}
\usepackage{latexsym}
\usepackage{amsmath}

\textwidth=140mm \textheight=208mm \calclayout
\def\wt{\widetilde}
\def\wh{\widehat}
\def\ov{\overline}
\def \im{{\rm Im}}
 \def\up{\upharpoonright}
\def\cH{\mathcal H} \def\cB{\mathcal B}

\def\cK{\mathcal K} \def\cL{\mathcal L}
 \def\cN{\mathcal N} \def\cP{\mathcal P} 
 \def\cT{\mathcal T} \def\cI{\mathcal I}

\def \gH{\mathfrak H}   \def \gN{\mathfrak N}

\def \bC{\mathbb C}    \def\bR{\mathbb R}
\def\bH{\mathbb H} 
\def \l{\lambda}
\def \a{\alpha} \def \b{\beta}  \def \L{\Lambda}  \def \s{\sigma}
 \def \t{\theta} \def\g {\gamma}
\def\d {\delta}  \def\om {\omega} \def\Om {\Omega} 
\def \f{\varphi}  \def \G{\Gamma} \def\D {\Delta} \def\Si{\Sigma}

\def \C{\widetilde {\mathcal C}}
\def \CA{\C(\cH_0,\cH_1)}

\def \cd {\cdot}

\def\AC {AC(\cI; \bH)}   \def\LI {L_\Delta^2(\cI)}
\def\lI {\cL_\Delta^2(\cI)}
\def\LS {L^2(\Sigma ;\bH)} \def\lS {\cL^2(\Sigma; \bH)}

\def\tma{\cT_{\max}} \def\tmi{\cT_{\min}} \def\Tma{T_{\max}} \def\Tmi{T_{\min}}
\def\Sel{\wt{\rm Self}_0 (\Tmi)} \def\Selo{{\rm Self}_0 (\Tmi) }

\def \dom {{\rm dom}\,}  \def \ran {{\rm ran}\,}  \def \ker{{\rm ker\,}}
 \def \mul {{\rm mul}\,}

  \def\tm{\times}

\def \pair {\tau=\{\tau_+,\tau_-\}}
\def \CR {\bC\setminus\bR}

\newcommand {\lo}[1] {\cL_\D^2[#1,\bH ]}

\def\bt{\{\cH,\G_0,\G_1\}}
\def\bta{\{\cH_0\oplus \cH_1,\Gamma _0,\Gamma _1\}}

\newtheorem{theorem}{Theorem}[section]
\newtheorem{proposition}[theorem]{Proposition}
\newtheorem{corollary}[theorem]{Corollary}
\newtheorem{lemma}[theorem]{Lemma}
\newtheorem{assertion}[theorem]{Assertion}
\theoremstyle{definition}

\theoremstyle{definition}
\newtheorem {definition} [theorem]{Definition}
\theoremstyle{remark}
\newtheorem{remark}[theorem]{Remark}
\numberwithin{equation}{section}
\begin{document}
\title[On spectral and pseudospectral functions]
{On spectral and pseudospectral functions of first-order symmetric systems }
\author {Vadim Mogilevskii}
\address{Institute of Applied Mathematics and Mechanics, NAS of
Ukraine,  R. Luxemburg Str. 74,  83050 Donetsk,    Ukraine}
\address{Department of Mathematics,  Lugans'k National University,
 Oboronna Str. 2, 91011  Lugans'k,   Ukraine}
\email{vim@mail.dsip.net}
\subjclass[2010]{34B08,34B40,34L10,47A06,47B25}
\keywords{First-order symmetric system, spectral function, pseudospectral function, Fourier transform, characteristic matrix}
\begin{abstract}
We consider general (not necessarily Hamiltonian) first-order symmetric system $J y'-B(t)y=\D(t) f(t)$ on an interval $\cI=[a,b) $ with the regular
endpoint $a$. A distribution matrix-valued function $\Si(s), \; s\in\bR,$ is called a spectral (pseudospectral) function of such  a system if the corresponding Fourier transform is an isometry (resp. partial isometry) from $\LI$ into $L^2(\Si)$. The main result is a parametrization of all spectral and pseudospectral functions of a given system  by
means of a Nevanlinna boundary parameter  $\tau$. Similar
parameterizations for various classes of boundary  problems
have earlier been obtained by Kac and Krein, Fulton, Langer and Textorius, Sakhnovich and others.
\end{abstract}
\maketitle
\section{Introduction}
Let  $H$ and $\wh H$ be finite dimensional Hilbert spaces and let
\begin {equation}\label{1.1}
\bH:=H\oplus\wh H \oplus H.
\end{equation}
 Denote also by $[\bH]$ the set of all linear
operators in $\bH$. We study first-order symmetric systems  of differential equations defined on an interval $\cI=[a,b), -\infty<a <b\leq\infty,$ with the
regular endpoint $a$ and regular or singular endpoint $b$. Such a system is of the form \cite{Atk,GK}
\begin {equation}\label{1.2}
J y'-B(t)y=\D(t) f(t), \quad t\in\cI,
\end{equation}
where $B(t)=B^*(t)$ and $\D(t)\geq 0$ are locally integrable $[\bH]$-valued functions on $\cI$ and
\begin {equation} \label{1.3}
J=\begin{pmatrix} 0 & 0&-I_H \cr 0& i I_{\wh H}&0\cr I_H&
0&0\end{pmatrix}:H\oplus\wh H\oplus H \to H\oplus\wh H\oplus H.
\end{equation}
 Recall that system \eqref{1.2} is called definite if  each  solution of the homogeneous system
\begin {equation}\label{1.5}
J y'-B(t)y=\l \D(t) y, \quad \l \in\bC
\end{equation}
satisfying $\D(t)y(t)=0$ (a.e. on $\cI$) is trivial, i.e., $y(t)=0,\; t\in\cI$. Recall also that system \eqref{1.2} is called a Hamiltonian system if $\wh H=\{0\}$ and hence
\begin {equation}\label{1.6}
J=\begin{pmatrix}  0&-I_H \cr  I_H& 0\end{pmatrix}:H\oplus H \to H\oplus H.
\end{equation}
Let $\gH:=\LI$ be the Hilbert space of  functions $f(\cd):\cI\to\bH$  satisfying $ \int\limits_\cI (\D(t)f(t),f(t))_\bH\,dt <\infty$ and let $\gH_b$ be the set of  functions $f\in\gH$ with compact support. Denote also by $Y_0(\cd,\l)$ the $[\bH]$-valued operator solution of \eqref{1.5} satisfying $Y_0(a,\l)=I_\bH$. For each function $f(\cd)\in\gH_b$ define the Fourier transform
\begin {equation}\label{1.6a}
\wh f(s)=\int_\cI Y_0^*(t,s)\D(t)f(t)\,dt.
\end{equation}
\begin{definition}\label{def1.1}
A distribution $[\bH]$-valued function $\Si(s),\; s\in\bR,$ is called a spectral function of the system \eqref{1.2} if for each $f\in\gH_b$ the following Parseval equality holds
\begin {equation}\label{1.6b}
\int_\cI (\D(t)f(t),f(t))\, dt=\int_\bR (d\Si(s)\wh f(s),\wh f(s)).
\end{equation}
This means that the mapping $Vf=\wh f,\; f\in\gH_b,$ admits an extension to an isometry $V=V_\Si$ from $\gH$ to $\LS$ (for definition of the Hilbert space $\LS$ see \cite{DunSch,Kac50} and also Section \ref{sub2.3}).
\end{definition}
As is known the extension theory of symmetric linear relations gives a natural framework   for studying of spectral functions of symmetric systems.Assume that system \eqref{1.2} is definite. Then according to \cite{Kac03,LesMal03, Orc}   this system  generates the minimal linear relation $\Tmi$ and the maximal
linear relation $\Tma$ in   $\gH$.  It turns out that $\Tmi$ is a closed symmetric relation with possibly nontrivial multivalued part $\mul \Tmi=\{f\in\gH: \{0,f\}\in \Tmi\}$. Moreover, the  deficiency indices $n_\pm(\Tmi)$ of $\Tmi$ are not necessarily equal and satisfy $n_\pm(\Tmi)\leq\dim\bH$.

Recall that a linear relation $\wt T=\wt T^*\supset\Tmi$ in a Hilbert space $\wt\gH\supset \gH$ is called an exit space self-adjoint extension of $\Tmi$. Denote by $\wt {\rm Self} (\Tmi)$ the set of all (minimal) exit space self-adjoint extensions of $\Tmi$ and let $\Sel $ be the set of all $\wt T\in \wt {\rm Self} (\Tmi)$ with $\mul\wt T=\mul\Tmi$. Each extension $\wt T\in \wt {\rm Self} (\Tmi)$ generates a generalized resolvent $R(\l)$ of $\Tmi$ defined by
\begin {equation}\label{1.7a}
R(\l)=P_\gH(\wt T-\l)^{-1}\up\gH,\quad \l\in\CR.
\end{equation}
According to \cite{Bru78,DLS88,Sht57}  $R(\l)$ admits the representation
\begin {equation}\label{1.8}
(R(\l) f)(x)=\int_\cI Y_0(x,\l)(\Om(\l)+\tfrac 1 2 \, {\rm
sgn}(t-x)J)Y_0^*(t,\ov\l)\D(t) f(t)\,dt , \quad f\in\gH,
\end{equation}
where $\Om(\cd)=\Om_{\wt T}(\cd):\CR\to [\bH]$ is an operator function called a characteristic matrix of the system \eqref{1.2}. Since $\Om(\cd)$ is a Nevanlinna function, the equality (the Stieltjes formula)
\begin {equation}\label{1.9}
\Si(s)=\Si_\Om(s)=\lim\limits_{\d\to+0}\lim\limits_{\varepsilon\to +0} \frac 1 \pi\int_{-\d}^{s-\d}\im \,\Om(\s+i\varepsilon)\, d\s
\end{equation}
defines a distribution $[\bH]$-valued function $\Si_\Om(\cd)$ (the spectral function of $\Om(\cd)$).

Let $X_\D=\{t\in\cI: \D(t)\;\; \text{is invertible}\}$ and let $\mu_1$ be the Lebesgue measure on $\cI$. If $\mu_1(\cI\setminus X_\D)=0$, then $\Tmi$ is a densely defined operator in $\gH$ and hence each extension $\wt T\in \wt {\rm Self} (\Tmi)$ is an operator (this implies that $\wt {\rm Self} (\Tmi)=\Sel$). By using the same methods as in \cite{Sht57} one can show that in this case for each $\wt T\in \wt {\rm Self} (\Tmi)$ the equalities \eqref{1.7a}--\eqref{1.9} define a spectral function $\Si(\cd)$ of the system \eqref{1.2}.

If $\mu_1(\cI\setminus X_\D)>0$, then  the situation is more complicated. In particular, in this case spectral functions of the system \eqref{1.2} may not exist.

Spectral type functions of Hamiltonian systems \eqref{1.2} with $\mu_1(\cI\setminus X_\D)\geq 0$ were studied in \cite{Kac83,Kac03,Sah90,Sah13}. Namely, let $\f(t,\l)=(\f_1(t,\l),\, \f_2(t,\l))^\top(\in [H,H\oplus H])$ be an operator solution of \eqref{1.5} with $\f_1(a,\l)=I_H$ and $\f_2(a,\l)=0$ and let
\begin {equation}\label{1.12}
\wh f_\f(s)=\int_\cI \f^*(t,s)\D(t)f(t)\, dt, \quad s\in \bR
\end{equation}
be the "truncated" Fourier transform of a function $f\in\gH_b$ (cf. \eqref{1.6a}). Moreover, let $T$ be a symmetric extension of $\Tmi$ defined as a closure of the set of all $\{y,f\}\in\Tma$ such that a function $y=\{y_1(t), y_2(t)\}(\in H\oplus H)$ has compact support and satisfies $y_2(a)=0$. Assume also that $\mul T=\{f\in\gH:\{0,f\}\in T\}$ is a multivalued part of $T$ and let  $\gH_0:=\gH\ominus \mul T$, so that
\begin {equation}\label{1.13}
\gH=\mul T\oplus \gH_0.
\end{equation}
In the papers by Kats \cite{Kac83,Kac03} Hamiltonian systems \eqref{1.2} with $H=\bC$   and $B(t)\equiv 0$ were considered. In these papers a quasispectral function is defined as a scalar distribution function $\s_\f (s), \; s\in\bR,$ satisfying
\begin {equation}\label{1.14}
\int_\cI (\D(t)f(t), f(t))\,dt=\int_\bR |\wh f_\f(s)|^2\, d\s_\f (s)
\end{equation}
for all $f\in\gH_b\cap\gH_0$. Moreover, $\s_\f (s)$ is called a spectral function if \eqref{1.14} holds for all $f\in\gH_b$.
It  is shown in \cite{Kac03} that at least one quasispectral function always exists, while a spectral function exists if and only if $\mul T=\{0\}$. Observe also that the subspace $\gH_0$ is characterized in \cite{Kac03} in terms of indivisible intervals for $\D(t)$.

Recall that system \eqref{1.2} is called regular if $b<\infty$ and the coefficients $\D(t)$ and $B(t)$ are integrable on $\cI=[a,b]$. Spectral and pseudospectral functions of regular Hamiltonian systems with $B(t)\equiv 0$ were studied in \cite{Sah90} (see also \cite{Sah13}). Clearly, for such a system the Fourier transform \eqref{1.12} is well defined for every function $f\in\gH$; moreover, \cite[Lemma 6]{Sah90} yields $\mul T=\{f\in\gH:\wh f_\f(s)\equiv 0,\; s\in\bR\}$. According to \cite{Sah90} a distribution $[H]$-valued function $\Si_\f(\cd)$ is called a pseudospectral function if
\begin {equation*}
\int_\cI (\D(t)f(t), f(t))\,dt=\int_\bR ( d\Si_\f (s)\wh f_\f(s), \wh f_\f(s)), \quad f\in\gH_0.
\end{equation*}
Under certain additional assumptions, a description of all  pseudospectral (and spectral) functions $\Si_\f(\cd)$ of a regular Hamiltonian system \eqref{1.2} is obtained in \cite{Sah90,Sah13}. Such a description is given in terms of a   linear-fractional transform of a Nevanlinna operator pair which plays a role of a parameter.

Spectral functions of a general (not necessarily Hamiltonian) definite system \eqref{1.2} defined on  $\cI=[a,b), \; b\leq \infty,$ were studied in  \cite{LanTex78,LanTex84,LanTex85}. According to \cite{LanTex84} an  $[\bH]$-valued distribution function $\Si(\cd)$ is called a spectral function of system \eqref{1.2} if the mapping $Vf=\wh f$  defined by \eqref{1.6a}  admits an extension to a contraction  $V$ from $\gH$ to $\LS$ satisfying $||Vf||=||f||$ for all $f\in\dom \Tma$. By using the Krein's method of directing mappings the authors establish in \cite{LanTex84} a correspondence between spectral functions and self-adjoint extensions of $\Tmi$.

In the present paper we study spectral and pseudospectral functions of definite symmetric systems \eqref{1.2}. We specify a connection between boundary problems for such systems and pseudospectral functions $\Si(\cd)$. This enables us to parameterize all functions $\Si(\cd)$ in terms of a boundary parameter.

Note that some of our results are closely related to those from \cite{LanTex84,LanTex85} (for more details see Remarks \ref{rem4.5a}, \ref{rem4.13a} and  \ref{rem4.19} below).

Assume that $\Si(\cd)$ is an $[\bH]$-valued distribution function on $\bR$ satisfying the following condition:

(C) The mapping $Vf=\wh f$ originally defined by \eqref{1.6a} on $\gH_b$ admits an extension to a partial isometry $V=V_\Si$ from $\gH$ to $\LS$.

We prove that in this case
\begin {equation}\label{1.15}
\mul\Tmi \subset \ker V_\Si.
\end{equation}

For an $[\bH]$-valued distribution function $\Si(\cd)$ satisfying the condition (C) and the additional condition $||V_\Si f||=||f||, \; f\in\dom\Tmi,$ the inclusion \eqref{1.15} can be derived from the results of \cite{LanTex84} (see Remark \ref{rem4.5a} below).

Let system \eqref{1.2} be Hamiltonian with $\dim H=1$ and  $B(t)\equiv 0$, let $\wh f_\f(s)$ be the Fourier transform \eqref{1.12} and let $\s (s)$ be a scalar distribution function such that the mapping $Vf=\wh f,\; f\in \gH_b,$ admits an extension to a partial isometry $V$ from $\gH$ to $L_\s^2$. Then   the inclusion $\mul T \subset \ker V$ follows from the proof of Lemma 4.1 in \cite{Kac03}. Note that this proof is based on the method of indivisible intervals, which is not elaborated for the case $\dim H>1$.

The inclusion \eqref{1.15} makes natural the following definition (cf. Definition \ref{def1.1}).
\begin{definition}\label{def1.2}
An $[\bH]$-valued distribution function $\Si(\cd)$ on $\bR$ satisfying the condition (C) is called a pseudospectral function of the system \eqref{1.2} if $\ker V_\Si=\mul\Tmi$.
\end{definition}
Clearly, a pseudospectral function $\Si(\cd)$ is a spectral function in the sense of \cite{LanTex84}.

The main result of the paper is a parametrization of  all pseudospectral functions in terms of a Nevanlinna boundary parameter. Such a parametrization is obtained for  absolutely definite systems satisfying $n_-(\Tmi)\leq n_+(\Tmi)$ (system \eqref{1.2} is called absolutely definite if $\mu_1 (X_\D)>0$). However to simplify presentation we additionally assume below (within this section) that system is Hamiltonian and $n_-(\Tmi)= n_+(\Tmi)$.  In this case there exist a finite dimensional  Hilbert  space $\cH_b$
 and a surjective linear mapping $
\G_b=(\G_{0b}, \G_{1b})^\top:\dom\Tma\to
 \cH_b \oplus \cH_b$
such that
\begin {equation*}
 [y,z]_b(=\lim\limits_{t\uparrow b} (J y(t),z(t)))= (\G_{0b}y,\G_{1b}z)-(\G_{1b}y,\G_{0b}z),\quad y,z \in \dom\Tma
\end{equation*}
In fact $\G_b y$ is a singular  boundary value of a function $y\in\dom\Tma$ (for more details see Remark 3.5 in \cite{AlbMalMog13}).

Assume that $\cH_b$ and $\G_b$ are fixed. For a function $y\in\dom\Tma$  let
$\G_0'y=\{ - y_1(a),\;\G_{0b}y\}\in  H\oplus \cH_b$ and $\G_1'y= \{y_0(a),-\G_{1b}y\}\in H\oplus \cH_b$, where $y_0(a)$ and $y_1(a)$ are taken from the representation $y(t)=\{y_0(t),\,y_1(t)\}(\in H\oplus H)$ of $y$. We show that for each generalized resolvent $R(\l)$ of $\Tmi$ there exists a unique Nevanlinna pair  $\tau=\{C_0(\l),C_1(\l)\}$ of operator functions $C_0(\l), \; C_1(\l)(\in [H\oplus\cH_b]),\; \l\in\CR,$ such that a function $y(t)=(R(\l)f)(t), \; f=f(\cd)\in\gH,$ is an $L_\D^2$-solution of the following boundary problem:
\begin {gather}
J y'-B(t)y = \l \D(t) y+\D(t)f(t), \quad t\in\cI\label{1.16}\\
C_0(\l)\G_0'y-C_1(\l)\G_1'y=0,\quad \l\in\CR.\label{1.17}
\end{gather}
Note, that  \eqref{1.17} is a boundary condition imposed on boundary values of a function $y\in\dom\Tma$. One may consider a Nevanlinna pair $\tau$
as a  boundary parameter, since $R(\l)$ runs over the set of all generalized resolvents of $\Tmi$ when $\tau$ runs over the set of all
Nevanlinna pairs $\tau=\{C_0(\l),C_1(\l)\}$. To indicate this fact explicitly we write $R(\l)=R_\tau(\l)$ and $\Om(\l)=\Om_\tau(\l)$ for the generalized resolvent of $\Tmi$ and the corresponding characteristic matrix respectively.

The main result can be formulated in the form of the following theorem.
\begin{theorem}\label{th1.3}
If system \eqref{1.2} is absolutely definite, then there exist operator functions $\Om_0(\l)(\in [\bH]), \; S(\l)(\in [H\oplus\cH_b,\bH])$ and a Nevanlinna operator function $M(\l)(\in [H\oplus\cH_b]), \; \l\in\CR,$ such that the equality
\begin {equation}\label{1.18}
\Om(\l)=\Om_\tau(\l)=\Om_0(\l)+S(\l)(C_0(\l)-C_1(\l)M(\l))^{-1}C_1(\l)S^*(\ov\l), \quad \l\in\CR
\end{equation}
together with the Stieltjes formula \eqref{1.9} establishes a bijective correspondence between all  boundary parameters $\tau=\{C_0(\l),C_1(\l)\}$ satisfying
\begin {equation*}
\lim_{y\to \infty} \tfrac 1 {i y} (C_0(i y)-C_1(i y )M(i y))^{-1}C_1(i y) =\lim_{y\to \infty} \tfrac 1 {i y} M(i y)( C_0(i y)-C_1(i y )M(i y) )^{-1} C_0(i y)=0
\end{equation*}
and all pseudospectral functions  $\Si(\cd)=\Si_\tau(\cd)$ of the system.
\end{theorem}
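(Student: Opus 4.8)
The strategy is to place the whole construction inside a boundary triple for $\Tma$ adapted to the two endpoints, to read off the characteristic matrix $\Om_\tau(\l)$ of each generalized resolvent from the explicit solution of the boundary value problem \eqref{1.16}--\eqref{1.17}, and then to use the Stieltjes formula \eqref{1.9} together with a Shtraus-type Parseval argument to pass from characteristic matrices to pseudospectral functions. I describe the plan in the simplified setting of the theorem (Hamiltonian system, $n_-(\Tmi)=n_+(\Tmi)$); for the general absolutely definite case with $n_-(\Tmi)\leq n_+(\Tmi)$ one replaces the ordinary boundary triple below by an appropriate boundary relation, the remaining steps being unchanged.

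\emph{Step 1 (boundary triple and the formula \eqref{1.18}).} First I would fix the boundary triple $\{H\oplus\cH_b,\G_0',\G_1'\}$ for $\Tma$ with $\G_0',\G_1'$ as in the statement, and compute its $\gamma$-field $\gamma(\l)\in[H\oplus\cH_b,\gH]$ and Weyl function $M(\l)\in[H\oplus\cH_b]$; because $Y_0(a,\l)=I_\bH$, the operator $\gamma(\l)$ is given explicitly in terms of $Y_0(\cd,\l)$ and the singular boundary mapping $\G_b$, and $M(\cd)$ is a Nevanlinna function. Using the already established bijection $R(\l)=R_\tau(\l)$ between generalized resolvents of $\Tmi$ and Nevanlinna boundary parameters $\tau=\{C_0(\l),C_1(\l)\}$ realized by the boundary problem \eqref{1.16}--\eqref{1.17}, I would write the $L_\D^2$-solution $y=(R_\tau(\l)f)(\cd)$ as a particular solution plus a solution of the homogeneous equation whose boundary data are forced by \eqref{1.17}; comparing the resulting integral kernel with the canonical representation \eqref{1.8} yields \eqref{1.18}, where $\Om_0(\l)$ is the characteristic matrix of the canonical self-adjoint extension $\ker\G_0'$ of $\Tmi$ (the value of $\Om_\tau$ at $\tau=\{I,0\}$) and $S(\l)\in[H\oplus\cH_b,\bH]$ is obtained by pairing $Y_0(\cd,\l)$ with $\gamma(\ov\l)$. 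Invertibility of $C_0(\l)-C_1(\l)M(\l)$ for $\l\in\CR$ is exactly the admissibility of $\tau$ as a boundary parameter, so the right-hand side of \eqref{1.18} is well defined; since $\Om_\tau(\cd)$ is a Nevanlinna function the Stieltjes formula \eqref{1.9} produces a distribution function $\Si_\tau:=\Si_{\Om_\tau}$.

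\emph{Step 2 (from $\Om_\tau$ to pseudospectral functions).} Each $R_\tau(\l)$ is generated by a minimal exit space self-adjoint extension $\wt T_\tau\in\wt{\rm Self}(\Tmi)$. Compressing the spectral measure of $\wt T_\tau$ through the Fourier transform \eqref{1.6a} and combining it with \eqref{1.8}--\eqref{1.9}, one obtains by the Shtraus method (cf.\ \cite{Sht57}) that the closure of $Vf=\wh f$ is a partial isometry $V_{\Si_\tau}$ from $\gH$ into $L^2(\Si_\tau)$, so $\Si_\tau$ satisfies condition (C), and moreover that $\ker V_{\Si_\tau}=\mul\wt T_\tau$. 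By \eqref{1.15} one always has $\mul\Tmi\subset\ker V_{\Si_\tau}$, and $\Si_\tau$ is a pseudospectral function in the sense of Definition \ref{def1.2} precisely when $\mul\wt T_\tau=\mul\Tmi$, i.e.\ $\wt T_\tau\in\Sel$. The algebraic core of the theorem is to translate $\wt T_\tau\in\Sel$ into a condition on the parameter: an exit space extension given by the Krein formula has minimal multivalued part if and only if the parameter has no ``mass at infinity'', which in the present parametrization becomes exactly
\[
\lim_{y\to\infty}\tfrac1{iy}\bigl(C_0(iy)-C_1(iy)M(iy)\bigr)^{-1}C_1(iy)=0,\qquad \lim_{y\to\infty}\tfrac1{iy}M(iy)\bigl(C_0(iy)-C_1(iy)M(iy)\bigr)^{-1}C_0(iy)=0 .
\]
Establishing this equivalence — matching the abstract minimality of $\mul\wt T_\tau$ with the two one-sided limit conditions for the linear-fractional transform along the imaginary axis — is the step I expect to be the main obstacle, since it requires controlling the asymptotics of $(C_0-C_1M)^{-1}C_1$ and of $M(C_0-C_1M)^{-1}C_0$ simultaneously and linking both to the behaviour of $\mul\wt T_\tau$.

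\emph{Step 3 (bijectivity).} For injectivity: $\tau\mapsto R_\tau(\l)$ is injective by uniqueness of the solution of \eqref{1.16}--\eqref{1.17}, the characteristic matrix is determined by $R_\tau(\cd)$ through \eqref{1.8}, and, $\Om_0,S,M$ being fixed, \eqref{1.18} lets one recover $C_1(\l)\bigl(C_0(\l)-C_1(\l)M(\l)\bigr)^{-1}$ and hence $\tau$ from $\Om_\tau$; since a pseudospectral function determines $\im\Om_\tau$ and, via the functional model, $\Om_\tau$ itself up to the additive term absorbed in $\Om_0$, distinct admissible $\tau$ yield distinct $\Si_\tau$. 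For surjectivity: given any pseudospectral function $\Si$, one builds from the partial isometry $V_\Si$ and the closure of $\ran V_\Si$ in $L^2(\Si)$ a functional model of a minimal exit space self-adjoint extension $\wt T\supset\Tmi$; the hypothesis $\ker V_\Si=\mul\Tmi$ forces $\mul\wt T=\mul\Tmi$, so $\wt T\in\Sel$, its generalized resolvent has characteristic matrix $\Om$ with $\Si_\Om=\Si$, and by Step 1 $\wt T=\wt T_\tau$ for a unique admissible $\tau$, which by Step 2 satisfies the displayed limit conditions. Hence $\Si=\Si_\tau$, and together with Steps 1--2 this proves the theorem.
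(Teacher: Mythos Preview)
Your overall architecture matches the paper's: build the decomposing boundary triplet, derive the Krein-type formula \eqref{1.18} for the characteristic matrix, use Theorem~\ref{th2.12.3} to identify the limit conditions with $\wt T^\tau\in\Sel$, and use the Shtraus--Stieltjes argument to pass between $\Sel$ and pseudospectral functions. Two points need correction.

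First, a misstatement in Step~1: invertibility of $C_0(\l)-C_1(\l)M(\l)$ holds for \emph{every} Nevanlinna boundary parameter $\tau$ and every $\l\in\CR$ (this is part of Theorem~\ref{th2.12.1}), not only for admissible ones. Admissibility is purely the pair of limit conditions at $iy\to\infty$; it has nothing to do with invertibility at finite $\l$.

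Second, and more importantly, you never use the hypothesis that the system is absolutely definite, and the place where it is needed is precisely the step you pass over in Step~3. You write: ``its generalized resolvent has characteristic matrix $\Om$ with $\Si_\Om=\Si$''. Why should the spectral function of $\Om_{\wt T}$ coincide with the given pseudospectral function $\Si$? From $\Si$ you can build $\wt T=\wt T_\Si\in\Sel$ via the functional model (this is Proposition~\ref{pr4.12}), but a priori there might be several pseudospectral functions $\Si'$ with $\wt T_{\Si'}=\wt T$, and $\Si_{\Om_{\wt T}}$ is only one of them. The paper closes this gap in Theorem~\ref{th4.16}\,(2) by a direct measure-theoretic argument: writing the difference of the two kernels $K_{\d,\Si}-K_{\d,\Si_\Om}$ and using that the set $\{t:\D(t)\text{ invertible}\}$ has positive Lebesgue measure to force $\Psi=\Psi_\Om$ $\s$-a.e. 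Without absolute definiteness this uniqueness is open (see Remark~\ref{rem4.19}), so it cannot be skipped. Your injectivity argument in Step~3 is also shakier than necessary (recovering $\Om_\tau$ from $\Si_\tau$ is delicate); the paper instead gets injectivity immediately from the uniqueness clause of Proposition~\ref{pr4.12}.

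A minor slip in Step~2: $\ker V_{\Si_\tau}$ is a subspace of $\gH$, while $\mul\wt T_\tau$ lives in $\wt\gH$; what one actually obtains from the Shtraus computation is $\ker V_{\Si_\tau}=\gH\cap\mul\wt T_\tau$, which equals $\mul\Tmi$ exactly when $\wt T_\tau\in\Sel$.
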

Note that the operator functions $\Om_0(\cd), \; S(\cd)$ and $M(\cd)$ in \eqref{1.18} are defined in terms of the boundary values of respective $L_\D^2$-operator solutions of Eq.  \eqref{1.5}. Observe also that in the case of maximal deficiency indices a description  of spectral and pseudospectral functions for certain  classes of boundary value problems in the form close to \eqref{1.18}, \eqref{1.9} has been obtained in \cite{Ful77,Gor66,HinSha82,Hol85,KacKre,Mog07}(for regular symmetric systems see \cite{LanTex84,LanTex85} and \cite{Sah90}). Moreover,
similar to \eqref{1.18}, \eqref{1.9} parametrization of $[H\oplus\wh H]$-valued pseudospectral functions $\Si(\cd)$ of a singular system \eqref{1.2} with arbitrary deficiency indices of $\Tmi$
 can be found in  recent works \cite{AlbMalMog13,Mog13.1}.

It follows from \eqref{1.15} that the set of spectral functions of the system \eqref{1.2} is not empty if and only if $\mul\Tmi=\{0\}$. Moreover, if this condition is satisfied, then the set of spectral functions coincides with the set of pseudospectral functions and, consequently, all the above results hold for spectral functions.

In conclusion note that, in the case of a definite regular Hamiltonian system \eqref{1.2}, for each pseudospectral function   $\Si_\f(s)(\in [H])$ in the sense of \cite{Sah90,Sah13} there is a pseudospectral function $\Si(s)(\in [H\oplus H])$ in the sense of Definition \ref{def1.2} corresponding to appropriate  separated boundary conditions \eqref{1.17} and such that $\Si(s)= {\rm diag} (\Si_\f(s), \, 0)$. Hence the results from \cite{Sah90,Sah13} concerning pseudospectral functions of definite systems can be developed by using the results of the present paper (this assertion will be clarified in more details elsewhere).
\section{Preliminaries}
\subsection{Notations}
The following notations will be used throughout the paper: $\gH$, $\cH$ denote Hilbert spaces; $[\cH_1,\cH_2]$  is the set of all bounded linear operators defined on the Hilbert space $\cH_1$ with values in the Hilbert space $\cH_2$; $[\cH]:=[\cH,\cH]$; $P_\cL$ is the orthoprojection in $\gH$ onto the subspace $\cL\subset\gH$; $\bC_+\,(\bC_-)$ is the upper (lower) half-plane  of the complex plane.

Recall that a closed linear   relation   from $\cH_0$ to $\cH_1$ is a closed  linear subspace in $\cH_0\oplus\cH_1$. The set of all closed linear relations from $\cH_0$ to $\cH_1$ (in $\cH$) will be denoted by $\C (\cH_0,\cH_1)$ ($\C(\cH)$). A closed linear operator $T$ from $\cH_0$ to $\cH_1$  is
identified  with its graph $\text {gr}\, T\in\CA$.

For a linear relation $T\in\C (\cH_0,\cH_1)$  we denote by $\dom T,\,\ran T,
\,\ker T$ and $\mul T$  the domain, range, kernel and the multivalued part of
$T$ respectively. Recall that $\mul T$ ia a subspace in $\cH_1$ defined by
\begin{gather*}
\mul T:=\{h_1\in \cH_1:\{0,h_1\}\in T\}.
\end{gather*}
Clearly, $T\in \C (\cH_0,\cH_1)$ is an operator if and only if $\mul T=\{0\}$.
The inverse and adjoint linear relations of $T$ are the
relations $T^{-1}\in\C (\cH_1,\cH_0)$ and $T^*\in\C (\cH_1,\cH_0)$ defined by
\begin{gather*}
T^{-1}=\{\{h_1,h_0\}\in\cH_1\oplus\cH_0:\{h_0,h_1\}\in T\}\nonumber\\
T^* = \{\{k_1,k_0\}\in \cH_1\oplus\cH_0:\, (k_0,h_0)-(k_1,h_1)=0, \;
\{h_0,h_1\}\in T\}.
\end{gather*}
Recall also that an operator function $\Phi
(\cd):\bC\setminus\bR\to [\cH]$ is called a Nevanlinna function if it is
holomorphic and satisfies $\im\, \l\cd \im \Phi (\l)\geq 0 $ and $\Phi ^*(\l)=\Phi (\ov \l), \; \l\in\bC\setminus\bR$.
\subsection{Symmetric relations and generalized resolvents}
Recall that a linear relation $A\in\C (\gH)$ is called symmetric (self-adjoint) if $A\subset A^*$ (resp. $A=A^*$). For each symmetric relation $A\in\C (\gH)$ the following decompositions hold
\begin{equation*}
\gH=\gH_0\oplus \mul A, \qquad A={\rm gr}\, A_0\oplus \wh {\mul} A,
\end{equation*}
where $\wh {\mul} A=\{0\}\oplus \mul A$ and $A_0$ is a closed symmetric not necessarily densely defined operator in $\gH_0$ (the operator part of $A$). Moreover, $A=A^*$ if and only if $A_0=A_0^*$.

Let $A=A^*\in \C (\gH)$, let $\cB$ be the Borel $\sigma$-algebra of $\bR$ and let $E_0(\cd):\cB\to [\gH_0]$ be the orthogonal spectral measure of $A_0$. Then the spectral measure $E_A(\cd):\cB\to [\gH]$ of $A$ is defined as $E_A(B)=E_0(B)P_{\gH_0}, \; B\in\cB$.
\begin{definition}\label{def2.0}
Let $\wt A=\wt A^*\in \C(\wt\gH)$ and let $\gH$ be a subspace in $\wt\gH$. The relation $\wt A$ is called $\gH$-minimal if
$\ov{\text{span}} \{\gH,(\wt A-\l)^{-1}\gH: \l\in\CR\}=\wt\gH$.
\end{definition}
\begin{definition}\label{def2.1}
The relations $T_j\in \C (\gH_j), \; j\in\{1,2\},$ are said to be unitarily equivalent (by means of a unitary operator $U\in [\gH_1,\gH_2]$) if $T_2=\wt U
T_1$ with $\wt U=U\oplus U \in [\gH_1^2, \gH_2^2]$.
\end{definition}
Let $A\in\C (\gH)$ be a symmetric relation. Recall the following definitions and results.
\begin{definition}\label{def2.2}
A relation $\wt A=\wt A^*$ in a Hilbert space $\wt \gH \supset \gH$ satisfying $A\subset \wt A$ is called an exit space self-adjoint extension of $A$. Moreover, such an extension $\wt A$ is called minimal if it is $\gH$-minimal.
\end{definition}
In what follows we denote by  $\wt {\rm Self} (A)$ the set of all minimal exit space self-adjoint extensions of $A$. Moreover, we denote by ${\rm Self} (A)$ the set of all extensions $\wt A=\wt A^*\in \C (\gH)$ of $A$ (such an extension is called canonical). As is known, for each $A$ one has $\wt {\rm Self} (A)\neq \emptyset $. Moreover,
${\rm Self} (A)\neq  \emptyset$ if and only if $A$ has equal deficiency indices, in which case ${\rm Self} (A)\subset \wt{\rm Self} (A)$.
\begin{definition}\label{def2.4}
Exit space extensions $\wt A_j=\wt A_j^*\in \C (\wt \gH_j),\; j\in\{1,2\},$ of $A$ are called equivalent (with respect to $\gH$) if there  exists  a unitary operator $V\in
[\wt\gH_1\ominus \gH, \wt\gH_2\ominus \gH]$ such that  $\wt A_1$ and
$\wt A_2$ are unitarily equivalent by means of $U=I_{\gH}\oplus V$.
\end{definition}
\begin{definition}\label{def2.5}
The operator functions $R(\cd):\CR\to [\gH]$ and $F(\cd):\bR\to [\gH]$
are called a generalized resolvent and a spectral function of  $A$
respectively if there exists an exit space extension $\wt A$ of $A$ (in a certain Hilbert space $\wt \gH\supset \gH$)  such that
\begin {gather}
R(\l) =P_\gH (\wt A- \l)^{-1}\up \gH, \quad \l \in \CR\label{2.4}\\
F(t)=P_{\gH}E((-\infty,t))\up\gH, \quad  t\in\bR.\label{2.5}
\end{gather}
Here $P_\gH$ is the orthoprojection in $\wt\gH$ onto $\gH$ and $E(\cd)$ is the spectral measure of $\wt A$.
\end{definition}
In the case $\wt A\in  {\rm Self} (A)$ the equality \eqref{2.4} defines a canonical resolvent $R(\l)=(\wt A-\l)^{-1}$ of A.
\begin{proposition}\label{pr2.6}
Each generalized resolvent $R(\l)$ of $A$ is generated by some (minimal) extension $\wt A\in \wt{\rm Self} (A)$. Moreover,  the extensions $\wt A_1,\, \wt A_2\in \wt {\rm Self} (A)$ inducing the same generalized resolvent $R(\cd)$ are equivalent.
\end{proposition}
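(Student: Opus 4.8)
The plan is to prove the two assertions of Proposition \ref{pr2.6} separately, using the standard dilation-theoretic machinery for symmetric relations. For the first assertion, let $R(\l)$ be a generalized resolvent of $A$, so by Definition \ref{def2.5} there is an exit space extension $\wt A=\wt A^*\in\C(\wt\gH_1)$ (with $\wt\gH_1\supset\gH$, not necessarily $\gH$-minimal) such that $R(\l)=P_\gH(\wt A-\l)^{-1}\up\gH$ for $\l\in\CR$. First I would pass to the $\gH$-minimal part: set
\begin{equation*}
\wt\gH:=\ov{\text{span}}\{\gH,(\wt A-\l)^{-1}\gH:\l\in\CR\}\subset\wt\gH_1,
\end{equation*}
and argue that $\wt\gH$ reduces $\wt A$, i.e.\ the orthoprojection onto $\wt\gH$ commutes with the resolvents $(\wt A-\l)^{-1}$. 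This uses that $\wt\gH$ is invariant under $(\wt A-\l)^{-1}$ for all $\l\in\CR$ (immediate from the definition of $\wt\gH$, since $(\wt A-\l)^{-1}(\wt A-\mu)^{-1}\gH$ is expressible via the resolvent identity as a combination of $(\wt A-\l)^{-1}\gH$ and $(\wt A-\mu)^{-1}\gH$), together with the symmetry relation $((\wt A-\l)^{-1})^*=(\wt A-\ov\l)^{-1}$ which gives invariance of $\wt\gH$ under the adjoints as well; an invariant-under-resolvent-and-adjoint subspace is reducing. Then $\wt A_0:=\wt A\cap(\wt\gH\oplus\wt\gH)$ is self-adjoint in $\wt\gH$, contains $A$ (since $A\subset\gH\oplus\gH\subset\wt\gH\oplus\wt\gH$ and $A\subset\wt A$), is $\gH$-minimal by construction, and satisfies $P_\gH(\wt A_0-\l)^{-1}\up\gH=P_\gH(\wt A-\l)^{-1}\up\gH=R(\l)$ because $\wt\gH$ reduces $\wt A$ and $\gH\subset\wt\gH$. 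Hence $\wt A_0\in\wt{\rm Self}(A)$ generates $R(\cd)$.

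For the uniqueness-up-to-equivalence assertion, suppose $\wt A_j=\wt A_j^*\in\C(\wt\gH_j)$, $j=1,2$, are both in $\wt{\rm Self}(A)$ and generate the same $R(\cd)$. The idea is to build the unitary $U$ on generators. For finite linear combinations $h+\sum_k(\wt A_1-\l_k)^{-1}g_k$ with $h,g_k\in\gH$, $\l_k\in\CR$, define $U$ to send this to $h+\sum_k(\wt A_2-\l_k)^{-1}g_k$; the key point is that the $\gH$-inner product of two such generators depends only on $\gH$, the given vectors, the $\l$'s, and $R(\cd)$ — because each pairing $((\wt A_j-\l)^{-1}g,(\wt A_j-\mu)^{-1}g')$ can be reduced, via the resolvent identity $(\wt A_j-\l)^{-1}-(\wt A_j-\mu)^{-1}=(\l-\mu)(\wt A_j-\l)^{-1}(\wt A_j-\mu)^{-1}$ and the symmetry $((\wt A_j-\l)^{-1})^*=(\wt A_j-\ov\l)^{-1}$, to quantities of the form $(P_\gH(\wt A_j-\nu)^{-1}\up\gH\,\xi,\eta)=(R(\nu)\xi,\eta)$ with $\xi,\eta\in\gH$. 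Thus $U$ is a well-defined isometry between dense subsets, extends to a unitary $\wt\gH_1\to\wt\gH_2$ (surjective by $\gH$-minimality of $\wt A_2$), restricts to the identity on $\gH$, and intertwines the resolvents, hence $\wt A_2=(U\oplus U)\wt A_1(U\oplus U)^{-1}$; moreover $U\up(\wt\gH_1\ominus\gH)$ is the unitary $V$ required in Definition \ref{def2.4}.

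I expect the main obstacle to be purely bookkeeping rather than conceptual: one must handle linear relations (not just operators), so all manipulations of $(\wt A-\l)^{-1}$ should be phrased with the Neumann/resolvent identity valid for relations, and care is needed that $(\wt A-\l)^{-1}$ is a bounded everywhere-defined operator precisely because $\wt A$ is self-adjoint (the multivalued part is absorbed: $\mul\wt A=\ker((\wt A-\l)^{-1})$ does not obstruct boundedness). A second mild point is verifying that the span defining $\wt\gH$ is genuinely reducing for the relation $\wt A$ and not merely invariant under each resolvent; here the self-adjointness and the identity $((\wt A-\l)^{-1})^*=(\wt A-\ov\l)^{-1}$ do the work, and one concludes by the standard fact that a closed subspace invariant under a self-adjoint relation's resolvents together with their adjoints reduces the relation. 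Both of these are routine once set up carefully, so no serious difficulty is anticipated; the result is classical and due in this generality to Krein, Langer, Štraus and others as indicated in the references.
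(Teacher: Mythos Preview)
Your argument is correct and is precisely the classical dilation-theoretic proof of this result. However, you should be aware that the paper does \emph{not} supply a proof of Proposition \ref{pr2.6} at all: it is stated as a known fact in the preliminaries section, with the implicit attribution to the standard literature on generalized resolvents (Krein, Na\u{\i}mark, \u{S}traus, Langer--Textorius). So there is nothing to compare against; your write-up simply fills in what the paper takes for granted.

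A few small remarks on your sketch, in case you intend to polish it. In the uniqueness half, the inner-product computation $((\wt A_1-\l)^{-1}g,(\wt A_1-\mu)^{-1}g')$ via the resolvent identity requires $\l\neq\ov\mu$; the diagonal case $\l=\ov\mu$ is handled by a limiting argument or by noting that equality of norms on a dense set already follows from the off-diagonal identities by polarization and continuity. Also, to conclude equivalence in the sense of Definition \ref{def2.4} you need $U(\wt\gH_1\ominus\gH)=\wt\gH_2\ominus\gH$; this is immediate once you observe that $U$ is unitary and fixes $\gH$ pointwise, hence preserves $\gH^\perp$. Finally, your caveat about relations is well placed but harmless here: since $\wt A=\wt A^*$, each $(\wt A-\l)^{-1}$ for $\l\in\CR$ is a genuine bounded operator on all of $\wt\gH$, and the resolvent identity holds in the usual operator form, so no extra relational bookkeeping is actually needed in the body of the argument.
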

In the sequel we suppose   that  a generalized resolvent $R(\cd)$ and a spectral function $F(\cd)$ are generated by an extension $\wt A\in \wt {\rm Self} (A)$. Moreover, we identify equivalent extensions. Then by Proposition \ref{pr2.6} the equality \eqref{2.4} gives a bijective correspondence between generalized resolvents $R(\l)$ and extensions
$\wt A\in \wt {\rm Self} (A)$, so that each $\wt A\in \wt {\rm Self} (A)$  is uniquely  defined by the corresponding generalized resolvent  \eqref{2.4} (spectral function \eqref{2.5}).

It follows from  \eqref{2.4} and \eqref{2.5} that the generalized
resolvent $R(\cd)$ and the spectral function $F(\cd)$ generated by
an  extension $\wt A\in \wt {\rm Self} (A)$  are related  by
\begin {equation*}
R(\l)=\int_{\bR}\frac {d F(t)} {t-\l}, \quad \l\in\bR.
\end{equation*}
Moreover, setting  $\wt\gH_0=\wt\gH\ominus \mul \wt A$ one gets from
\eqref{2.5} that
\begin {equation}\label{2.7}
F(\infty)(:=s-\lim\limits_{t\to +\infty}F(t) )=P_\gH
P_{\wt\gH_0}\up\gH.
\end{equation}
\subsection{The spaces $\cL^2(\Si;\cH)$ and $L^2(\Si;\cH)$ }\label{sub2.3}
Let $\cH$ be a finite dimensional Hilbert space.  A non-decreasing operator function $\Si(\cd): \bR\to [\cH]$ is called a
distribution function if it is left continuous and satisfies $\Si(0)=0$.
\begin{theorem}\label{th2.8} $\,$\cite[ch. 3.15]{DunSch}, \cite{Kac50}
Let $\Si(\cd): \bR\to [\cH]$  be a distribution function. Then:
\begin{enumerate}\def\labelenumi{\rm (\arabic{enumi})}
\item
There exist a scalar measure $\s$ on Borel sets of $\bR$ and a function $\Psi:\bR\to [\cH]$ (uniquely defined by $\s$ up to $\s$-a.e.) such that $\Psi (s)\geq 0$ $\s$-a.e. on $\bR$, $\s([\a,\b))<\infty$ and $\Si(\b)-\Si(\a)=\int\limits_{[\a,\b)}\Psi(s)\, d \s (s)$ for any finite interval $[\a,\b)\subset\bR$.
\item
The set  $\cL^2(\Si;\cH)$ of all Borel-measurable functions $f(\cd):\bR\to \cH$ satisfying
\begin {equation*}
||f||_{\cL^2(\Si;\cH)}^2=\int_\bR (d\Si(s)f(s),f(s)):=\int_\bR(\Psi(s)f(s),f(s))_{\cH}\, d\s (s)<\infty
\end{equation*}
is a semi-Hilbert space with the semi-scalar product
\begin {equation*}
(f,g)_{\cL^2(\Si;\cH)}=\int_\bR (d\Si(s)f(s),g(s)):=\int_\bR(\Psi(s)f(s),g(s))_{\cH}\,d\s (s), \quad f,g\in \cL^2(\Si;\cH).
\end{equation*}
Moreover, different measures $\s$ from statement {\rm (1)} give rise to the same space  $\cL^2(\Si;\cH)$.
\end{enumerate}
\end{theorem}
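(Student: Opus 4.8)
The statement to prove is Theorem 2.8, which is essentially a standard structure theorem for spaces $\mathcal{L}^2(\Sigma;\mathcal{H})$ attributed to Dunford--Schwartz and Kac. Let me sketch a proof plan.

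\textbf{Proof proposal.}

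The plan is to first produce the scalar control measure $\s$ together with the density $\Psi$, and then verify that the resulting semi-Hilbert space structure is independent of the choice of $\s$. For statement (1), since $\mathcal H$ is finite dimensional, fix an orthonormal basis $e_1,\dots,e_n$ of $\mathcal H$ and consider the scalar distribution functions $\s_{jk}(s):=(\Si(s)e_j,e_k)_{\mathcal H}$. Each diagonal function $\s_{jj}$ is non-decreasing and left continuous with $\s_{jj}(0)=0$, hence determines a non-negative Borel measure $\mu_{jj}$ with $\mu_{jj}([\a,\b))=\s_{jj}(\b)-\s_{jj}(\a)<\infty$ on finite intervals. Set $\s:=\sum_{j=1}^n\mu_{jj}$; this is a non-negative Borel measure, finite on finite intervals. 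The off-diagonal functions $\s_{jk}$ are of bounded variation on every finite interval (being differences of non-decreasing functions, by the polarization/positivity of $\Si$), and the associated complex measures $\mu_{jk}$ satisfy $|\mu_{jk}|\le\tfrac12(\mu_{jj}+\mu_{kk})\le\s$; hence each $\mu_{jk}\ll\s$ and the Radon--Nikodym theorem gives densities $\psi_{jk}=d\mu_{jk}/d\s\in L^1_{\mathrm{loc}}(\s)$. Assemble $\Psi(s):=(\psi_{jk}(s))_{j,k=1}^n$ as an $[\mathcal H]$-valued function. Then for any finite interval $[\a,\b)$ one has $(\Si(\b)-\Si(\a))e_j=\sum_k\bigl(\int_{[\a,\b)}\psi_{kj}\,d\s\bigr)e_k=\int_{[\a,\b)}\Psi(s)e_j\,d\s(s)$, which is the claimed matrix identity. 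Positivity $\Psi(s)\ge0$ $\s$-a.e. follows because for every rational vector $h$ the scalar measure $s\mapsto(\Si(s)h,h)$ is non-decreasing, so its density $(\Psi(s)h,h)$ is $\ge0$ off a $\s$-null set; taking the countable union of these null sets over a dense set of $h$ and using continuity of $h\mapsto(\Psi(s)h,h)$ in finite dimension yields $\Psi(s)\ge0$ $\s$-a.e. Uniqueness of $\Psi$ up to $\s$-a.e. equivalence is the essential uniqueness in Radon--Nikodym applied entrywise.

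For statement (2), once $\Psi\ge0$ is in hand, the integrand $(\Psi(s)f(s),f(s))_{\mathcal H}$ is a non-negative Borel-measurable function of $s$ for Borel-measurable $f$, so $\|f\|^2_{\mathcal L^2(\Si;\mathcal H)}\in[0,\infty]$ is well defined, and $\mathcal L^2(\Si;\mathcal H)$ is a linear space (triangle inequality via Cauchy--Schwarz pointwise in the positive semidefinite form $(\Psi(s)\,\cdot\,,\cdot)$ plus Minkowski in $L^2(\s)$). The sesquilinear form $(f,g)\mapsto\int_\bR(\Psi(s)f(s),g(s))\,d\s(s)$ is well defined on $\mathcal L^2(\Si;\mathcal H)$ by the pointwise Cauchy--Schwarz bound $|(\Psi f,g)|\le(\Psi f,f)^{1/2}(\Psi g,g)^{1/2}$ followed by the integral Cauchy--Schwarz inequality, and it is clearly linear in the first argument, conjugate-symmetric, and non-negative on the diagonal; hence it is a semi-scalar product, making $\mathcal L^2(\Si;\mathcal H)$ a semi-Hilbert space.

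It remains to prove that a different admissible pair $(\s',\Psi')$ from statement (1) yields the same space with the same semi-norm. The key observation is that $\s$ and $\s'$ need not be mutually absolutely continuous, but they agree on the ``active'' part: introduce $\rho:=\s+\s'$, write $\s=p\,d\rho$, $\s'=p'\,d\rho$ with $p,p'\ge0$ in $L^1_{\mathrm{loc}}(\rho)$, and set $\widetilde\Psi:=p\Psi$, $\widetilde\Psi':=p'\Psi'$ as $\rho$-densities of $d\Si$. By the uniqueness part of (1) applied with control measure $\rho$, one gets $\widetilde\Psi=\widetilde\Psi'$ $\rho$-a.e. Now for any Borel $f$, the change of variables for Radon--Nikodym densities gives $\int_\bR(\Psi f,f)\,d\s=\int_\bR(\widetilde\Psi f,f)\,d\rho=\int_\bR(\widetilde\Psi' f,f)\,d\rho=\int_\bR(\Psi' f,f)\,d\s'$ (with the same meaning whether finite or $+\infty$), and likewise for the sesquilinear form. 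Hence $f\in\mathcal L^2(\Si;\mathcal H)$ computed with $(\s,\Psi)$ iff the same holds with $(\s',\Psi')$, and the semi-scalar products coincide; thus the space and its structure depend only on $\Si$. The main obstacle is the bookkeeping in this last independence argument — in particular being careful that the Radon--Nikodym densities compose correctly ($\widetilde\Psi=p\Psi$ holds $\rho$-a.e.\ even where $p=0$, since there $d\Si$ also vanishes) and that integrals against $\s$ and against $p\,d\rho$ agree for non-negative integrands regardless of finiteness; the rest is routine.
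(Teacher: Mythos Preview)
The paper does not prove Theorem~2.8 at all: it is stated as a known result with references to Dunford--Schwartz and Kats, and no argument is given. Your proof is correct and is essentially the standard one found in those references --- taking the trace measure $\s=\sum_j\mu_{jj}$ as the scalar control measure, building $\Psi$ via Radon--Nikodym entrywise, and verifying independence of the pair $(\s,\Psi)$ by passing to a common dominating measure $\rho=\s+\s'$. There is nothing to compare against in the paper itself.
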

\begin{definition}\label{def2.9}$\,$\cite{DunSch,Kac50}
The Hilbert space $L^2(\Si;\cH)$ is a Hilbert space of all equivalence classes in $\cL^2(\Si;\cH)$ with respect to the seminorm $||\cd||_{\cL^2(\Si;\cH)}$.
\end{definition}
In the following we denote by $\pi_\Si$ the quotient map from $\cL^2(\Si;\cH)$ onto $L^2(\Si;\cH)$. Moreover, we denote by $\cL^2_{loc}(\Si;\cH)$ the set of all functions $g\in \cL^2(\Si;\cH)$ with the compact support and we put $L^2_{loc}(\Si;\cH):=\pi_\Si \cL^2_{loc}(\Si;\cH)$.

With a distribution function $\Si(\cd)$ one associates the
multiplication operator $\L=\L_\Si$ in $L^2(\Si;\cH)$ defined by
\begin{gather}
\dom \Lambda_\Si=\{\wt f\in L^2(\Si;\cH):s f(s)\in \cL^2(\Si;\cH) \;\;\text{for some (and hence for all)}\;\; f(\cd)\in \wt f\}\nonumber\\
\Lambda_\Si \wt f=\pi_{\Si}(sf(s)), \;\; \wt f\in\dom\Lambda_\Si,\quad f(\cd)\in\wt f.\label{2.10}
\end{gather}
As is known, $\Lambda_\Si^*=\Lambda_\Si$ and the spectral measure $E_\Si$ of $\Lambda_\Si$ is given by
\begin {equation}\label{2.11}
E_\Si(B)\wt f= \pi_\Si (\chi_B(\cd)f(\cd)), \quad B\in\cB,\;\; \wt f \in  L^2(\Si;\cH),\;\; f(\cd)\in \wt f,
\end{equation}
where $\chi_B(\cd)$ is the indicator of the Borel set $B$.

Let $\cK, \;\cK'$ and $\cH$ be finite dimensional Hilbert spaces  and let $\Si(s)(\in [\cH])$ be a distribution function. For Borel measurable functions $Y(s)(\in [\cH,\cK]), \; g(s)(\in \cH)$ and $Z(s)(\in [\cK',\cH]), \; s\in\bR,$ we let
\begin{gather}
\int_\bR Y(s)d\Si(s)g(s):=\int_\bR Y(s)\Psi(s)g(s)\, d\s (s)\, (\in \cK)\label {2.12}\\
\int_\bR Y(s)d\Si(s)Z(s):=\int_\bR Y(s)\Psi(s)Z(s)\, d\s (s)\, (\in [\cK',\cK]),\label {2.13}
\end{gather}
where $\s$ and $\Psi(\cd)$ are defined in Theorem \ref{th2.8}, (1).
\subsection{The classes $\wt R_+(\cH_0,\cH_1)$ and $\wt R(\cH)$}
Let $\cH_0$ be a Hilbert space, let $\cH_1$ be a subspace in $\cH_0$ and let
$\tau =\{\tau_+,\tau_-\}$ be a collection of holomorphic functions
$\tau_\pm(\cd):\bC_\pm\to\CA$. In the paper we systematically deal with
collections $\pair$ of the special class $\wt R_+(\cH_0,\cH_1)$. Definition and
detailed characterization of this class can be found in our paper
\cite{Mog13.2} (see also \cite{Mog06.1,Mog11,AlbMalMog13}, where the notation
$\wt R (\cH_0,\cH_1)$ were used instead of $\wt R_+(\cH_0,\cH_1)$). If
$\dim\cH_1<\infty$, then according to \cite{Mog13.2} the collection $\pair\in
\wt R_+(\cH_0,\cH_1)$ admits the representation
\begin {equation}\label{2.15}
\tau_+(\l)=\{(C_0(\l),C_1(\l));\cH_0\}, \;\;\l\in\bC_+; \;\;\;\;
\tau_-(\l)=\{(D_0(\l),D_1(\l));\cH_1\}, \;\;\l\in\bC_-
\end{equation}
by means of two pairs of holomorphic operator functions
\begin {equation*}
(C_0(\l),C_1(\l)):\cH_0\oplus\cH_1\to\cH_0, \;\;\l\in\bC_+,\;\; \text{and}\;\;
(D_0(\l),D_1(\l)):\cH_0\oplus\cH_1\to\cH_1, \;\;\l\in\bC_-
\end{equation*}
(more precisely, by equivalence classes of such pairs). The equalities
\eqref{2.15} mean that
\begin {equation*}
\begin{array}{c}
\tau_+(\l)=\{\{h_0,h_1\}\in\cH_0\oplus\cH_1: C_0(\l)h_0+C_1(\l)h_1=0\},
\;\;\;\l\in\bC_+\\
\tau_-(\l)=\{\{h_0,h_1\}\in\cH_0\oplus\cH_1: D_0(\l)h_0+D_1(\l)h_1=0\},
\;\;\;\l\in\bC_-.
\end{array}
\end{equation*}
In \cite{Mog13.2} the class $\wt R_+(\cH_0,\cH_1)$ is characterized both in
terms of $\CA$-valued functions $\tau_\pm(\cd)$ and in terms of operator
functions $C_j(\cd)$ and $D_j(\cd), \; j\in\{0,1\},$ from \eqref{2.15}.

If $\cH_1=\cH_0=:\cH$, then the class $\wt R(\cH):=\wt R_+ (\cH,\cH)$ coincides
with the well-known class of Nevanlinna $\C (\cH)$-valued functions $\tau
(\cd)$ (see, for instance, \cite{DM00}). In this case the collection
\eqref{2.15} turns into the Nevanlinna pair
\begin {equation}\label{2.16}
\tau(\l)=\{(C_0(\l),C_1(\l));\cH\}, \quad \l\in\CR,
\end{equation}
with $C_0(\l),C_1(\l)\in [\cH]$. Recall also that the subclass $\wt
R^0(\cH)\subset \wt R(\cH)$ is defined as the set of all $\tau(\cd)\in\wt
R(\cH)$ such that $\tau(\l)\equiv \t(=\t^*), \; \l\in\CR$. This implies that
$\tau(\cd)\in \wt R^0(\cH)$  if and only if
\begin {equation}\label{2.17}
\tau(\l)\equiv \{(C_0,C_1);\cH\},\quad \l\in\CR,
\end{equation}
with some operators $C_0,C_1\in [\cH]$ satisfying  $\im (C_1C_0^*)=0 $ and
$0\in\rho (C_0\pm i C_1)$ (for more details see e.g. \cite[Remark
2.5]{AlbMalMog13}).
\subsection{Boundary triplets and Weyl functions}
Here we recall definitions of a boundary triplet and the corresponding Weyl function of a symmetric relation  following \cite{DM91,Mal92,Mog06.2,Mog13.2}.

Let $A$ be a closed  symmetric linear relation in the Hilbert space $\gH$, let $\gN_\l(A)=\ker (A^*-\l)\; (\l\in\bC)$ be a defect subspace of $A$, let $\wh\gN_\l(A)=\{\{f,\l f\}:\, f\in \gN_\l(A)\}$ and let $n_\pm (A):=\dim \gN_\l(A)\leq\infty, \; \l\in\bC_\pm,$ be deficiency indices of $A$.

Next, assume that $\cH_0$ is a Hilbert space,  $\cH_1$ is a subspace
in $\cH_0$ and   $\cH_2:=\cH_0\ominus\cH_1$, so that $\cH_0=\cH_1\oplus\cH_2$.
Denote by $P_j$ the orthoprojection   in $\cH_0$ onto $\cH_j,\; j\in\{1,2\} $.

\begin{definition}\label{def2.10}
 A collection $\Pi_+=\bta$, where
$\G_j: A^*\to \cH_j, \; j\in\{0,1\},$ are linear mappings, is called a boundary
triplet for $A^*$, if the mapping $\G :\wh f\to \{\G_0 \wh f, \G_1 \wh f\}, \wh
f\in A^*,$ from $A^*$ into $\cH_0\oplus\cH_1$ is surjective and the following
Green's identity holds
\begin {equation*}
(f',g)-(f,g')=(\G_1  \wh f,\G_0 \wh g)_{\cH_0}- (\G_0 \wh f,\G_1 \wh
g)_{\cH_0}+i (P_2\G_0 \wh f,P_2\G_0 \wh g)_{\cH_2}
\end{equation*}
 holds for all $\wh
f=\{f,f'\}, \; \wh g=\{g,g'\}\in A^*$.
\end{definition}
According to \cite{Mog06.2} a boundary triplet $\Pi_+=\bta$ for $A^*$ exists if and only if $n_-(A)\leq n_+(A)$, in which case $\dim \cH_1=n_-(A)$ and $ \dim \cH_0 =n_+(A)$.
\begin{proposition}\label{pr2.11}$\,$ \cite{Mog06.2}
Let  $\Pi_+=\bta$ be a boundary triplet for $A^*$.  Then  the equalities
\begin {gather*}
\G_1\up \wh\gN_\l(A)=M_+(\l)\G_0\up \wh\gN_\l(A),\quad\l\in\bC_+\\
(\G_1+i P_2\G_0)\up \wh\gN_\l(A)= M_-(\l)P_1\G_0\up\wh\gN_\l(A),\quad\l\in\bC_-
\end{gather*}
correctly define the (holomorphic) operator functions $M_{+}(\cdot):\bC_+\to [\cH_0,\cH_1]$ and $M_{-}(\cdot):\bC_-\to [\cH_1,\cH_0]$ satisfying $M_+^*(\ov\l)=M_-(\l), \; \l\in\bC_-$.
\end{proposition}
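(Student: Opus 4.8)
The plan is to establish, in this order: (i) the auxiliary identity $A=\ker\G_0\cap\ker\G_1$; (ii) that for $\l\in\bC_+$ the map $\G_0\up\wh\gN_\l(A)$ is a bijection onto $\cH_0$, so that $M_+(\l):=\G_1\circ(\G_0\up\wh\gN_\l(A))^{-1}$ is a well-defined element of $[\cH_0,\cH_1]$; (iii) that for $\l\in\bC_-$ every $\wh f\in\wh\gN_\l(A)$ satisfies $(\G_1+iP_2\G_0)\wh f=M_+^*(\ov\l)P_1\G_0\wh f$, while $P_1\G_0\up\wh\gN_\l(A)$ is a bijection onto $\cH_1$, so that $M_-(\l):=M_+^*(\ov\l)\in[\cH_1,\cH_0]$ is the unique operator satisfying the second displayed relation and the identity $M_+^*(\ov\l)=M_-(\l)$ holds automatically; (iv) holomorphy of $M_\pm$.

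For (i): if $\wh f=\{f,f'\}\in\ker\G$, Green's identity with arbitrary $\wh g=\{g,g'\}\in A^*$ reduces to $(f',g)-(f,g')=0$, so $\wh f\in(A^*)^*=A$; conversely, for $\wh f\in A$ and arbitrary $\wh g\in A^*$ one has $(f',g)-(f,g')=0$ by symmetry of $A$, and letting $\{\G_0\wh g,\G_1\wh g\}$ run over $\cH_0\oplus\cH_1$ (surjectivity of $\G$) Green's identity forces $\G_0\wh f\in\cH_2$ and $\G_1\wh f=-i\G_0\wh f$; since $\G_1\wh f\in\cH_1$ while $-i\G_0\wh f\in\cH_2=\cH_0\ominus\cH_1$, both vanish. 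For (ii): injectivity of $\G_0\up\wh\gN_\l(A)$ for $\l\in\bC_+$ is immediate, since Green's identity with $\wh g=\wh f$ and $\G_0\wh f=0$ gives $2i\,\im\l\,||f||^2=0$. Surjectivity onto $\cH_0$ then follows from $\dim\wh\gN_\l(A)=n_+(A)=\dim\cH_0<\infty$; alternatively, and without finite dimensionality, $A':=\ker\G_0$ is closed (boundedness of $\G$ in the graph norm is automatic here, $A^*/A$ being finite-dimensional) and symmetric, with $A'^*=\{\wh f\in A^*:P_1\G_0\wh f=0\}$ (again by Green's identity and surjectivity of $\G$); Green's identity together with $\cH_1\perp\cH_2$ shows $\gN_\mu(A')=\{0\}$ for $\mu\in\bC_-$, whence $\ran(A'-\l)=\gH$ and $A^*=A'+\wh\gN_\l(A)$ for $\l\in\bC_+$, so $\G_0(\wh\gN_\l(A))=\G_0(A^*)=\cH_0$. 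In either case $M_+(\l)\in[\cH_0,\cH_1]$ is the unique operator with $\G_1\up\wh\gN_\l(A)=M_+(\l)\G_0\up\wh\gN_\l(A)$.

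The main step is (iii). Fix $\l\in\bC_-$ and $\wh f=\{f,\l f\}\in\wh\gN_\l(A)$. Given arbitrary $v\in\cH_0$, use (ii) for $\ov\l\in\bC_+$ to pick $\wh g=\{g,\ov\l g\}\in\wh\gN_{\ov\l}(A)$ with $\G_0\wh g=v$; then $\G_1\wh g=M_+(\ov\l)v\in\cH_1$. Since $\ov{\ov\l}=\l$, the left-hand side of Green's identity for $(\wh f,\wh g)$ equals $(\l f,g)-(f,\ov\l g)=0$. In the right-hand side $(\G_0\wh f,M_+(\ov\l)v)=(P_1\G_0\wh f,M_+(\ov\l)v)=(M_+^*(\ov\l)P_1\G_0\wh f,v)$ because $M_+(\ov\l)v\in\cH_1$, and $(P_2\G_0\wh f,P_2v)=(P_2\G_0\wh f,v)$; collecting terms gives $((\G_1+iP_2\G_0)\wh f-M_+^*(\ov\l)P_1\G_0\wh f,v)=0$ for all $v\in\cH_0$, hence $(\G_1+iP_2\G_0)\wh f=M_+^*(\ov\l)P_1\G_0\wh f$. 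Thus $M_-(\l):=M_+^*(\ov\l)\in[\cH_1,\cH_0]$ satisfies the second defining relation — which is exactly the asserted identity $M_+^*(\ov\l)=M_-(\l)$. Finally, $P_1\G_0\up\wh\gN_\l(A)$ is injective: $P_1\G_0\wh f=0$ forces $\G_0\wh f\in\cH_2$ and $(\G_1+iP_2\G_0)\wh f=0$, so $\G_1\wh f=-i\G_0\wh f\in\cH_1\cap\cH_2=\{0\}$, whence $\wh f\in\ker\G=A$ and $f\in\ker(A-\l)=\{0\}$; combined with $\dim\wh\gN_\l(A)=n_-(A)=\dim\cH_1<\infty$ this yields bijectivity onto $\cH_1$, so $M_-(\l)$ is uniquely determined.

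For (iv), since $M_-(\l)=M_+^*(\ov\l)$ it suffices to show $M_+$ is holomorphic on $\bC_+$, which is standard: near any $\l_0\in\bC_+$ there is a holomorphic operator function $\l\mapsto Z(\l)\in[\cH_0,A^*]$ (graph norm) with $Z(\l)$ a bijection onto $\wh\gN_\l(A)$, and then $\G_0Z(\l),\ \G_1Z(\l)$ are holomorphic with $\G_0Z(\l)$ boundedly invertible by (ii), so $M_+(\l)=\G_1Z(\l)(\G_0Z(\l))^{-1}$ is holomorphic. The conceptual heart of the argument, and the step most likely to require care, is (iii): one must exploit the nonstandard form of Green's identity — in particular the term $i(P_2\G_0\,\cd,P_2\G_0\,\cd)_{\cH_2}$ together with the splitting $\cH_0=\cH_1\oplus\cH_2$ — so that the $\cH_2$-components of the boundary data cancel exactly and the full operator $M_+^*(\ov\l)$, rather than only its compression to $\cH_1$, reproduces $(\G_1+iP_2\G_0)\up\wh\gN_\l(A)$. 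The surjectivity assertions are the only places where finite-dimensionality of the $\cH_j$ is used, and it can be bypassed for $M_+$ as indicated.
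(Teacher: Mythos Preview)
The paper does not prove Proposition~\ref{pr2.11}; it is quoted from \cite{Mog06.2} and stated without argument, so there is no ``paper's own proof'' to compare against.  Your proposal must therefore be assessed on its own merits.

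Your argument is essentially correct and follows the natural route one would expect: identify $A=\ker\G$, show $\G_0\up\wh\gN_\l(A)$ is a bijection onto $\cH_0$ for $\l\in\bC_+$, and then use Green's identity on pairs $(\wh f,\wh g)\in\wh\gN_\l(A)\times\wh\gN_{\ov\l}(A)$ to force the relation $(\G_1+iP_2\G_0)\wh f=M_+^*(\ov\l)P_1\G_0\wh f$ for $\l\in\bC_-$.  The computation in~(iii) is the crux and you handle the $\cH_1\oplus\cH_2$ splitting correctly; in particular the observation that $M_+(\ov\l)v\in\cH_1$ kills the $\cH_2$-component of $\G_0\wh f$ in the pairing is exactly what is needed.

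One small inaccuracy: in your alternative argument for surjectivity in~(ii) you write that boundedness of $\G$ in the graph norm ``is automatic here, $A^*/A$ being finite-dimensional''.  This justification is wrong in the generality of Definition~\ref{def2.10}, where $n_\pm(A)$ may be infinite.  The correct (and standard) reason is that $\G$ has closed graph: if $\wh f_n\to\wh f$ in $A^*$ and $\G\wh f_n\to(h_0,h_1)$, Green's identity and surjectivity of $\G$ force $(h_0,h_1)=\G\wh f$.  With that fix your alternative argument goes through without any finiteness assumption.  Apart from this, the proof is sound.
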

\begin{definition}\label{def2.12}$\,$\cite{Mog06.2}
The operator functions  $M_\pm(\cd)$ defined in Proposition
\ref{pr2.11}  are called  the Weyl functions corresponding to the boundary triplet $\Pi_+$.
\end{definition}
 \begin{theorem}\label{th2.12.1}$\,$\cite{Mog06.2}
Let $A$ be a closed symmetric linear relation in $\gH$, let
$\Pi_+=\bta$ be a boundary triplet for $A^*$ and let $M_+(\cd)$ be the corresponding Weyl function. If $\pair\in\wt R_+(\cH_0,\cH_1)$ is a collection of holomorphic pairs \eqref{2.15}, then for every
$g\in\gH$ and $\l\in\CR$ the abstract boundary value problem
\begin{gather}
\{f,\l f+g\}\in A^*\label{2.19}\\
C_0(\l)\G_0\{f,\l f+g\}-C_1(\l)\G_1\{f,\l f+g\}=0, \quad
\l\in\bC_+
\label{2.20}\\
D_0(\l)\G_0\{f,\l f+g\}-D_1(\l)\G_1\{f,\l f+g\}=0, \quad
\l\in\bC_-\label{2.21}
\end{gather}
has a unique solution $f=f(g,\l)$ and the equality
$R(\l)g:=f(g,\l)$ defines a generalized resolvent $R(\l)=R_\tau
(\l)$ of $A$. Moreover, $0\in\rho (\tau_+(\l) +M_+(\l))$ and the
following Krein-Naimark formula for resolvents is valid:
\begin {equation}\label{2.22}
R_\tau(\l)=(A_0-\l)^{-1} -\g_+(\l)(\tau_+(\l)
+M_+(\l))^{-1}\g_-^*(\ov\l), \;\;\;\l\in\bC_+
\end{equation}
Conversely, for each generalized resolvent $R(\l)$ of $A$ there
exists a unique $\tau\in \wt R_+(\cH_0,\cH_1)$ such that $R(\l)=R_\tau (\l)$ and, consequently, the equality \eqref{2.22}  is valid.
\end{theorem}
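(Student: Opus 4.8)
The plan is to prove the direct part by solving the boundary value problem \eqref{2.19}--\eqref{2.21} for each fixed $\l$ and recognising the solution operator through the Krein--Naimark formula \eqref{2.22}, and then to obtain the converse by reading $\tau$ off a minimal exit space realisation of a given generalized resolvent. Besides $M_\pm$ I use the associated $\g$-fields $\g_+(\l):\cH_0\to\gN_\l(A)$ for $\l\in\bC_+$ ($\g_+(\l)h$ being the unique element of $\gN_\l(A)$ with $\G_0\{\g_+(\l)h,\l\g_+(\l)h\}=h$) and the analogous $\g_-(\l):\cH_1\to\gN_\l(A)$ for $\l\in\bC_-$, together with the reference extension $A_0:=\ker\G_0$, which in this framework is a maximal symmetric extension of $A$ with $\bC_+\subset\rho(A_0)$, so that $(A_0-\l)^{-1}\in[\gH]$ for $\l\in\bC_+$ and $\ker\G=\ov A$.

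First I would fix $\l\in\bC_+$ and $g\in\gH$ and use the componentwise direct sum of relations $A^*=A_0\dotplus\wh\gN_\l(A)$ to write every $\wh f=\{f,\l f+g\}\in A^*$ uniquely as $\wh f=\{(A_0-\l)^{-1}g,\,g+\l(A_0-\l)^{-1}g\}+\{\g_+(\l)h_0,\,\l\g_+(\l)h_0\}$ with $h_0=\G_0\wh f\in\cH_0$ free. A short computation with Green's identity and the definitions of $M_+$ and $\g_-$ gives $\G_0\wh f=h_0$ and $\G_1\wh f=\g_-^*(\ov\l)g+M_+(\l)h_0$, so that the boundary condition \eqref{2.20} becomes the linear equation $(C_0(\l)-C_1(\l)M_+(\l))h_0=C_1(\l)\g_-^*(\ov\l)g$ in $\cH_0$. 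Reading $\tau_+(\l)$ as the relation $\{\{x,y\}\in\cH_0\oplus\cH_1:C_0(\l)x+C_1(\l)y=0\}$ one checks $(\tau_+(\l)+M_+(\l))^{-1}=-(C_0(\l)-C_1(\l)M_+(\l))^{-1}C_1(\l)$; hence the equation is uniquely solvable for every $g$ if and only if $0\in\rho(\tau_+(\l)+M_+(\l))$, and in that case $f=(A_0-\l)^{-1}g-\g_+(\l)(\tau_+(\l)+M_+(\l))^{-1}\g_-^*(\ov\l)g$, which is precisely \eqref{2.22}. The analogous argument on $\bC_-$ uses the pair $(D_0,D_1)$ and $M_-$.

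The crux, and what I expect to be the main obstacle, is to prove $0\in\rho(\tau_+(\l)+M_+(\l))$ for all $\l\in\bC_+$ (and the dual statement on $\bC_-$). Here I would combine the sign and maximality properties built into the class $\wt R_+(\cH_0,\cH_1)$ from \cite{Mog13.2} (in particular $\im(y,x)_{\cH_0}\ge 0$ for $\{x,y\}\in\tau_+(\l)$, $\l\in\bC_+$) with the strict positivity of the imaginary part form of the Weyl function, which because of the $i(P_2\G_0\,\cd\,,P_2\G_0\,\cd\,)_{\cH_2}$ correction in Definition \ref{def2.10} equals $(\im\l)\g_+(\l)^*\g_+(\l)$ up to that correction; since $\g_+(\l)$ is injective and the boundary spaces are finite dimensional in our setting, this form is bounded below. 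Taking imaginary parts then forces $\ker(\tau_+(\l)+M_+(\l))=\{0\}$, and surjectivity follows by an index count (in general, by the maximal sectorial character of $\tau_+(\l)+M_+(\l)$). It is exactly the interplay between the unequal deficiency indices and this correction term that makes the step delicate.

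Granting this, $R_\tau(\cd)$ is a holomorphic $[\gH]$-valued function on $\CR$ obeying \eqref{2.22}; to see that it is a genuine generalized resolvent I would take the boundary triplet realisation of an arbitrary $\tau\in\wt R_+(\cH_0,\cH_1)$ from \cite{Mog13.2} (a symmetric relation $T$ in a finite dimensional space $\cK$ with a boundary triplet whose Weyl family is $\tau$), form the coupling $\wt A=\wt A^*$ of $A$ and $T$ in $\wt\gH:=\gH\oplus\cK$, check its $\gH$-minimality, and recompute $P_\gH(\wt A-\l)^{-1}\up\gH$ by running the reduction of the second paragraph inside $\wt\gH$; the special case $\tau\equiv\{(C_0,C_1);\cH\}\in\wt R^0(\cH)$ reproduces the Krein formula for canonical self-adjoint extensions and may serve as a warm-up. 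Conversely, given a generalized resolvent $R(\cd)$ of $A$, I would use Proposition \ref{pr2.6} to fix a minimal $\wt A=\wt A^*\in\wt{\rm Self}(A)$ with $R(\l)=P_\gH(\wt A-\l)^{-1}\up\gH$; symmetry of $\wt A$ and $A\subset\wt A$ give $\{R(\l)g,\l R(\l)g+g\}\in A^*$, so I may set $\tau_+(\l):=\{\{\G_0\wh f,\G_1\wh f\}:\wh f=\{R(\l)g,\l R(\l)g+g\},\ g\in\gH\}$ for $\l\in\bC_+$ and dually $\tau_-(\l)$ for $\l\in\bC_-$. Holomorphy and boundedness of $R(\cd)$ yield the representation \eqref{2.15} with holomorphic coefficients, self-adjointness of $\wt A$ yields the sign inequalities, and $\gH$-minimality of $\wt A$ yields the maximality and the behaviour at $i\infty$ that place $\tau=\{\tau_+,\tau_-\}$ in $\wt R_+(\cH_0,\cH_1)$; then $R(\l)=R_\tau(\l)$ by the second paragraph. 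Finally, uniqueness follows because if $R_{\tau^1}=R_{\tau^2}$, subtracting two instances of \eqref{2.22} and cancelling the injective $\g_+(\l)$ on the left and the surjective $\g_-^*(\ov\l)$ on the right gives $(\tau_+^1(\l)+M_+(\l))^{-1}=(\tau_+^2(\l)+M_+(\l))^{-1}$ for $\l\in\bC_+$, hence $\tau_+^1=\tau_+^2$, and likewise $\tau_-^1=\tau_-^2$.
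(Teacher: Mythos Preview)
The paper does not give a proof of this theorem: the statement carries the citation \cite{Mog06.2} and is quoted as a known result, with no argument supplied in the present paper. So there is no ``paper's own proof'' to compare your proposal against; the proof lives in \cite{Mog06.2}.

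That said, your sketch is a faithful outline of the standard argument for Krein--Naimark type formulas in the boundary triplet framework and is, in its broad strokes, the approach taken in \cite{Mog06.2}. The decomposition $A^*=A_0\dotplus\wh\gN_\l(A)$ for $\l\in\bC_+$ (using that $A_0=\ker\G_0$ is maximal symmetric with $\bC_+\subset\rho(A_0)$), the computation $\G_1\wh f=\g_-^*(\ov\l)g+M_+(\l)h_0$, and the reduction of \eqref{2.20} to a finite-dimensional linear system are all correct. A few cautions: on $\bC_-$ the reference extension $A_0$ need not have $\l\in\rho(A_0)$ when $n_-(A)<n_+(A)$, so the analogous decomposition and formula there must be written with the dual extension $A_1=\ker\G_1$ (or via $R_\tau(\l)=R_\tau(\ov\l)^*$), not by a naive symmetry argument; and in the invertibility step for $\tau_+(\l)+M_+(\l)$ you should be explicit that the imaginary-part inequality from the class $\wt R_+(\cH_0,\cH_1)$ already incorporates the $P_2$-correction, so that the combined form is strictly positive on $\cH_0$ --- this is precisely how \cite{Mog06.2,Mog13.2} organise the argument. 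With these points addressed your proposal would constitute a correct proof, but for the purposes of this paper the theorem is simply imported from \cite{Mog06.2}.
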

\begin{remark}\label{rem2.12.2}
It follows from Theorem \ref{th2.12.1} that the boundary value
problem \eqref{2.19}--\eqref{2.21} as well as formula for
resolvents \eqref{2.22} give a parametrization of
all generalized resolvents
\begin {equation}\label{2.23}
R(\l)=R_\tau(\l)=P_\gH (\wt A^\tau-\l)^{-1}\up \gH, \quad
\l\in\CR,
\end{equation}
and, consequently, all (minimal) exit space self-adjoint
extensions $\wt A = \wt A^\tau$ of $A$ by means of an abstract
boundary parameter $\tau\in \wt R_+(\cH_0,\cH_1)$.
\end{remark}
\begin{theorem}\label{th2.12.3}
Let under the assumptions of Theorem \ref{th2.12.1} $\pair\in\wt R_+(\cH_0,\cH_1)$  be a collection of holomorphic pairs  \eqref{2.15} and  let $\wt A^\tau\in \C (\wt \gH)$ be the corresponding exit space self-adjoint extension of  $A$ (see remark \ref{rem2.12.2}). Then:

{\rm (1)} The equalities
\begin {gather}
\Phi_\tau(\l):=P_1(C_0(\l)-C_1(\l)M_+(\l))^{-1}C_1
(\l),\quad \l\in\bC_+\label{2.24}\\
\wh\Phi_\tau(\l)=M_+(\l)(C_0(\l)-C_1(\l)M_+(\l))^{-1}C_0(\l)\up\cH_1,\quad \l\in\bC_+\label{2.25}
\end{gather}
define holomorphic $[\cH_1]$-valued functions $\Phi_\tau(\cd)$ and $\wh \Phi_\tau(\cd)$ on $\bC_+$ satisfying $\im \Phi_\tau(\l)\geq 0$ and $\im \wh\Phi_\tau(\l)\geq 0, \; \l\in\bC_+$ . Hence
there exist  strong limits
\begin {gather}
\cB_\tau:=s-\lim_{y\to +\infty} \tfrac 1 {i y}
P_1(C_0(i y)-C_1(i y )M_+(i y))^{-1}C_1(i y)\label{2.26}\\
\wh\cB_\tau:=s-\lim_{y\to +\infty} \tfrac 1 {i y}M_+(i y)(C_0(i y)-C_1(i y )M_+(i y))^{-1}C_0(i y)\up\cH_1\label{2.27}
\end{gather}

\rm{(2)}  The  extension $\wt A^\tau$ satisfies  $\mul
\wt A^\tau =\mul A$ if and only if $\cB_{\tau}=\wh\cB_{\tau}=0$
\end{theorem}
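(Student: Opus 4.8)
The plan is to reduce the statement $\mul\wt A^\tau=\mul A\Llr \cB_\tau=\wh\cB_\tau=0$ to an analysis of the behaviour of the generalized resolvent $R_\tau(\l)$ along the imaginary axis, exploiting the Krein--Naimark formula \eqref{2.22}. The starting point is the well-known criterion: for a symmetric relation $A$ and an exit space self-adjoint extension $\wt A=\wt A^\tau$ with generated generalized resolvent $R_\tau(\l)=P_\gH(\wt A^\tau-\l)^{-1}\up\gH$, one has $\mul\wt A^\tau=\mul A$ if and only if
\begin{equation*}
s-\lim_{y\to+\infty} iy\, R_\tau(iy)f = -P_{\gH\ominus\mul A}f,\qquad f\in\gH,
\end{equation*}
equivalently $s-\lim_{y\to+\infty} iy\, R_\tau(iy)\up\mul A=0$ together with the correct limit on $\gH\ominus\mul A$ (the latter holds automatically once $A_0$ is an operator). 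Concretely, writing $\wt\gH_0=\wt\gH\ominus\mul\wt A^\tau$ and using \eqref{2.7}, the condition $\mul\wt A^\tau=\mul A$ is equivalent to $P_\gH P_{\wt\gH_0}\up\gH = P_{\gH\ominus\mul A}$, which in turn is equivalent to the displayed strong-limit identity since $s-\lim_{y\to+\infty} iy(\wt A^\tau-iy)^{-1}=-P_{\wt\gH_0}$.

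Next I would insert the Krein--Naimark formula. From \eqref{2.22},
\begin{equation*}
iy\,R_\tau(iy) = iy(A_0-iy)^{-1} - iy\,\g_+(iy)(\tau_+(iy)+M_+(iy))^{-1}\g_-^*(\overline{iy}),\qquad y>0.
\end{equation*}
Since $A_0$ is an operator, $s-\lim_{y\to+\infty} iy(A_0-iy)^{-1}=-P_{\gH_0}=-P_{\gH\ominus\mul A}$, so the whole question collapses to: $\mul\wt A^\tau=\mul A$ iff the second term tends strongly to $0$ as $y\to+\infty$. Now I would use the standard asymptotic behaviour of the $\g$-fields and the Weyl function along $i\bR_+$: one has $\g_\pm(iy)\to 0$ strongly while $\tfrac1{iy}\g_+(iy)$, $\tfrac1{iy}M_+(iy)$ have finite strong limits (these are the classical estimates underlying Proposition \ref{pr2.11} and the construction of boundary triplets in \cite{Mog06.2}; they give $\|\g_+(iy)\|=O(1)$, $iy\,\g_+(iy)^*\g_-(iy)$ bounded, and $\im M_+(iy)\sim y$ on the complement of the "constant'' directions). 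Plugging the representation \eqref{2.15} of $\tau_+(iy)$ — i.e. $(\tau_+(iy)+M_+(iy))^{-1}$ expressed through $(C_0(iy)-C_1(iy)M_+(iy))^{-1}$ — and regrouping, the second term of $iy\,R_\tau(iy)$ is, up to factors converging strongly to bounded operators (the $\g$-fields), governed precisely by the two operator combinations
\begin{equation*}
\tfrac1{iy}P_1(C_0(iy)-C_1(iy)M_+(iy))^{-1}C_1(iy)\quad\text{and}\quad \tfrac1{iy}M_+(iy)(C_0(iy)-C_1(iy)M_+(iy))^{-1}C_0(iy)\up\cH_1,
\end{equation*}
whose strong limits are $\cB_\tau$ and $\wh\cB_\tau$ by part (1). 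I would show that the second term of $iy\,R_\tau(iy)$ tends strongly to $0$ if and only if both these limits vanish, by writing $(\tau_+(iy)+M_+(iy))^{-1}$ via the block/Schur-type identity that splits it into a part "seeing'' $C_1(C_0-C_1M_+)^{-1}$ and a part "seeing'' $M_+(C_0-C_1M_+)^{-1}C_0$, and matching the $\g_+(iy)$ on the left with $\g_-^*(\overline{iy})$ on the right so that the surviving scalings are exactly $iy$ on each block. The "only if'' direction uses that $\g_\pm(iy)$ are, for large $y$, uniformly bounded below on the relevant subspaces (injectivity of the $\g$-field), so no cancellation can hide a nonzero $\cB_\tau$ or $\wh\cB_\tau$.

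The main obstacle I anticipate is the bookkeeping in the last step: correctly splitting $(\tau_+(\l)+M_+(\l))^{-1}$ — which lives on $\cH_0$ while the two operators in \eqref{2.26}, \eqref{2.27} involve the projection $P_1$ onto $\cH_1$ and restriction to $\cH_1$ — and tracking which powers of $y$ attach to which block, so that the strong limit of the resolvent term is genuinely a (bounded, invertible-factor) combination of $\cB_\tau$ and $\wh\cB_\tau$ with no extraneous terms. This is where one must use the precise form \eqref{2.15} of the class $\wt R_+(\cH_0,\cH_1)$ (the admissibility conditions on $C_0,C_1$ from \cite{Mog13.2}), which guarantees both that $(C_0(iy)-C_1(iy)M_+(iy))^{-1}$ exists and that the extra $i(P_2\G_0\cdot,P_2\G_0\cdot)$ term in Green's identity (Definition \ref{def2.10}) is compatible with the splitting. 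Once that algebraic identity is in place, the limit passages are routine given the $\g$-field and Weyl-function asymptotics, and the equivalence follows.
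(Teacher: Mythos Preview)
Your proposal addresses only part (2) and there are genuine gaps. First, you do not prove part (1) at all: the existence of the strong limits $\cB_\tau,\wh\cB_\tau$ is part of the statement, and it rests on the claim that $\Phi_\tau$ and $\wh\Phi_\tau$ have nonnegative imaginary part on $\bC_+$. You simply invoke ``by part (1)'' as if it were given. Second, in your resolvent argument you write ``Since $A_0$ is an operator, $s\text{-}\lim_{y\to+\infty} iy(A_0-iy)^{-1}=-P_{\gH\ominus\mul A}$''. Here $A_0$ is the canonical self-adjoint extension $\ker\G_0$ appearing in the Krein--Naimark formula \eqref{2.22}, \emph{not} the operator part of $A$; in general $\mul A_0\supsetneq\mul A$ (indeed, for the parameter $\tau$ corresponding to $A_0$ one has $\wh\cB_\tau=s\text{-}\lim\tfrac1{iy}M_+(iy)\up\cH_1$, which need not vanish). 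Hence the baseline term in your decomposition already fails to give $-P_{\gH\ominus\mul A}$, and the remaining perturbative term cannot be characterized solely by $\cB_\tau=\wh\cB_\tau=0$ in the way you suggest. The ``only if'' direction is also problematic: you appeal to the $\g$-fields being ``uniformly bounded below'', but you yourself note $\g_\pm(iy)\to 0$ strongly, so this injectivity argument does not survive the limit without a much more careful quantitative analysis.

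The paper's proof is entirely different in spirit: it does no resolvent asymptotics. For part (1) it cites \cite[Theorem 4.8]{Mog13.2} for $\Phi_\tau$, and for $\wh\Phi_\tau$ it performs a purely algebraic trick: introducing modified data $\wh C_0(\l)=(C_1(\l),C_{02}(\l)),\ \wh C_1(\l)=-C_{01}(\l),\ \wh M_+(\l)=P_1(P_2-M_+(\l))^{-1}$, it shows by direct computation that $\wh\Phi_\tau(\l)=P_1(\wh C_0(\l)-\wh C_1(\l)\wh M_+(\l))^{-1}\wh C_1(\l)$, i.e.\ $\wh\Phi_\tau$ has the same structural form as $\Phi_\tau$ for a different (but still admissible) triple, so the Nevanlinna property follows from the same cited result. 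Part (2) is then obtained by rewriting $\wh\cB_\tau$ as a limit along $-i\infty$ in terms of the $D_j(\l)$ and invoking \cite[Theorem 4.9]{Mog13.2} directly. If you want to pursue a self-contained proof along your lines, you must (i) supply an independent argument for $\im\wh\Phi_\tau(\l)\ge 0$, and (ii) replace the incorrect reference point $A_0$ by a comparison with an extension already known to satisfy $\mul=\mul A$, or else work with $F(\infty)$ from \eqref{2.7} and track $\mul A_0\ominus\mul A$ explicitly.
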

\begin{proof}
Statement (1) for $\Phi_\tau(\l)$ was proved in \cite[Theorem 4.8]{Mog13.2}.

Next assume that
\begin {gather*}
C_0(\l)=(C_{01}(\l),C_{02}(\l)):\cH_1\oplus\cH_2\to\cH_0,\quad D_0(\l)
=(D_{01}(\l),D_{02}(\l)):\cH_1\oplus\cH_2\to\cH_1\\
M_+(\l)=(M(\l),N_+(\l)):\cH_1\oplus\cH_2\to\cH_1,\quad M_-(\l)=
(M(\l),N_-(\l))^\top:\cH_1\to\cH_1\oplus\cH_2
\end{gather*}
are the block-matrix representations of $C_0(\l), \; C_1(\l)$ and $M_\pm(\l)$. Moreover,let
\begin {gather*}
\wh C_0(\l)=( C_1(\l),C_{02}(\l)):\cH_1\oplus\cH_2\to\cH_0; \quad \wh C_1(\l)=-C_{01}(\l), \;\;\l\in\bC_+\\
\wh M_+(\l)=(-M^{-1}(\l),-M^{-1}(\l)N_+(\l) ):\cH_1\oplus\cH_2\to\cH_1,\;\;\l\in\bC_+
\end{gather*}
Then according to \cite{Mog13.2} the equalities
\begin {equation}\label{2.31}
\wh\Phi_\tau(\l):=P_1(\wh C_0(\l)-\wh C_1(\l)\wh M_+(\l))^{-1}\wh C_1
(\l),\;\; \l\in\bC_+; \quad \wh\Phi_\tau(\l):=\wh \Phi_\tau^*(\ov\l),\;\; \l\in\bC_-
\end{equation}
define a Nevanlinna function  $\wh\Phi_\tau(\cd):\CR\to [\cH_1]$ (i.e., a holomorphic function $\wh\Phi_\tau(\cd)$ such that $\im\l\cd\im \wh\Phi_\tau(\l)\geq 0$ and $\wh\Phi_\tau^*(\l)=\wh\Phi_\tau(\ov\l), \; \l\in\CR$). The  immediate checking shows that
\begin {equation*}
(P_2-M_+(\l))^{-1}=-M^{-1}(\l)P_1-M^{-1}(\l)N_+(\l)P_2+P_2
\end{equation*}
and, consequently, $P_1(P_2-M_+(\l))^{-1}=\wh M_+(\l)$ (here $M_+(\l)$ is considered as the operator in $\cH_0$). This and \eqref{2.31} imply  that for each $\l\in\bC_+$
\begin {gather*}
\wh\Phi_\tau(\l)=-P_1\left(C_1(\l)P_1+C_{02}(\l)P_2+C_{01}(\l)P_1(P_2-
M_+(\l))^{-1}\right)^{-1}C_{01}(\l)=\\
-P_1(P_2-M_+(\l))\bigl( (C_1(\l)P_1+C_{02}(\l)P_2)(P_2-
M_+(\l))+ C_{01}(\l)P_1\bigr)^{-1}C_{01}(\l)=\\
M_+(\l) (C_{02}(\l)P_2-C_1(\l)M_+(\l)+C_{01}(\l)P_1)^{-1} C_{01}(\l)=\\
M_+(\l)(C_0(\l)-C_1(\l)M_+(\l))^{-1}C_0(\l)\up \cH_1.
\end{gather*}
Thus the restriction of $\wh \Phi_\tau(\cd)$ on $\bC_+$ admits the representation \eqref{2.25}, which yields statement (1) for $\wh \Phi_\tau(\l)$.

It was shown in \cite{Mog13.2} that the second equality in \eqref{2.31} can be written as
\begin {equation*}
\wh\Phi_\tau(\l):=M(\l)(D_{01}(\l)-D_1(\l)M(\l)-i D_{02}(\l)N_-(\l))^{-1}
D_{01}(\l), \quad \l\in\bC_-
\end{equation*}
Therefore by \eqref{2.27} one has
\begin {gather*}
\wh \cB_\tau=s-\lim\limits_{y\to +\infty} \tfrac 1 {i y} \wh\Phi_\tau(i y)=
s-\lim\limits_{y\to -\infty} \tfrac 1 {i y} \wh\Phi_\tau(i y)=\\
s-\lim\limits_{y\to -\infty} \tfrac 1 {i y} M(iy)(D_{01}(i y)-D_1(i y)M(i y)-i D_{02}(i y)N_-(i y))^{-1}D_{01}(i y).
\end{gather*}
Now statement (2) follows from \cite[Theorem 4.9]{Mog13.2}.
\end{proof}
\begin{remark}\label{rem2.13}
  (1) If $\cH_0=\cH_1:=\cH$, then   the boundary triplet in the sense
of Definition \ref{def2.10} turns into the boundary triplet
$\Pi=\{\cH,\G_0,\G_1\}$ for $A^*$ in the sense of \cite{GorGor,Mal92}. In this case $n_+(A)=n_-(A)(=\dim \cH)$ and  $M_\pm(\cd)$ turn into the Weyl function $M(\cd):\CR\to [\cH]$  introduced in \cite{DM91,Mal92}. Moreover, in this case $M(\cd)$ is a Nevanlinna operator function.

 In the sequel a boundary triplet $\Pi=\bt$ in the
sense of \cite{GorGor,Mal92} will be  called an \emph{ordinary boundary
triplet} for $A^*$.

(2) Let $n_+(A)=n_-(A)$, let  $\Pi=\bt$ be an ordinary boundary
triplet for $A^*$ and let $M(\cd)$ be the corresponding Weyl function. Then an abstract boundary parameter $\tau$ in Theorem \ref{th2.12.1} is a Nevanlinna operator pair $\tau\in \wt R (\cH)$ of the form \eqref{2.16} and the equalities \eqref{2.26} and \eqref{2.27} take the form
\begin {gather}
\cB_\tau=s-\lim_{y\to \infty} \tfrac 1 {i y}
(C_0(i y)-C_1(i y )M(i y))^{-1}C_1(i y)\label{2.33}\\
\wh\cB_\tau=s-\lim_{y\to \infty} \tfrac 1 {i y}M(i y)(C_0(i y)-C_1(i y )M(i y))^{-1}C_0(i y).\label{2.34}
\end{gather}
Note that for this case Theorem \ref{th2.12.3} was proved in \cite{DM00,DM09}.

\end{remark}
\section{First-order symmetric systems }
\subsection{Notations}
Let $\cI=[ a,b\rangle\; (-\infty < a< b\leq\infty)$ be an interval of the real
line (the symbol $\rangle$ means that the endpoint $b<\infty$  might be either
included  to $\cI$ or not). For a given finite-dimensional Hilbert space $\bH$
denote by $\AC$ the set of functions $f(\cd):\cI\to \bH$ which are absolutely
continuous on each segment $[a,\b]\subset \cI$.

Next assume that $\D(\cd)$ is an $[\bH]$-valued Borel measurable  function on
$\cI$ integrable on each compact interval $[a,\b]\subset \cI$ and such that
$\D(t)\geq 0$. Denote  by $\lI$  the semi-Hilbert  space of  Borel measurable
functions $f(\cd): \cI\to \bH$ satisfying $||f||_\D^2:=\int\limits_{\cI}(\D
(t)f(t),f(t))_\bH \,dt<\infty$ (see e.g. \cite[Chapter 13.5]{DunSch}).  The
semi-definite inner product $(\cd,\cd)_\D$ in $\lI$ is defined by $
(f,g)_\D=\int\limits_{\cI}(\D (t)f(t),g(t))_\bH \,dt,\; f,g\in \lI$. Moreover,
let $\LI$ be the Hilbert space of the equivalence classes in $\lI$ with respect
to the semi-norm $||\cd||_\D$ and let $\pi_\D$ be the quotient map from $\lI$ onto
$\LI$.

For a given finite-dimensional Hilbert space $\cK$ we denote by $\lo{\cK}$ the set
of all Borel measurable  operator-functions $F(\cd): \cI\to [\cK,\bH]$ such
that $F(t)h\in \lI, \; h\in\cK$.
\subsection{Symmetric systems}
In this subsection we provide some known results on symmetric systems of
differential equations following  \cite{GK, Kac03, LesMal03, Orc}.

Let $H$ and $\wh H$ be  finite-dimensional Hilbert spaces and let
\begin {equation}\label{3.0.1}
H_0=H\oplus\wh H, \quad \bH=H_0\oplus  H=H\oplus\wh H \oplus H.
\end{equation}
Let as above $\cI=[ a,b\rangle\; (-\infty < a< b\leq\infty)$ be an interval in
$\bR$ . Moreover, let $B(\cd)$ and $\D(\cd)$ be $[\bH]$-valued Borel measurable functions on $\cI$ integrable on each compact interval $[a,\b]\subset \cI$ and
satisfying $B(t)=B^*(t)$ and $\D(t)\geq 0$ a.e. on $\cI$ and let $J\in [\bH]$ be  operator \eqref{1.3}.

A first-order symmetric  system on an interval $\cI$ (with the regular endpoint
$a$) is a system of differential equations of the form
\begin {equation}\label{3.1}
J y'-B(t)y=\D(t) f(t), \quad t\in\cI,
\end{equation}
where $f(\cd)\in \lI$. Together   with \eqref{3.1} we consider also the
homogeneous system
\begin {equation}\label{3.2}
J y'(t)-B(t)y(t)=\l \D(t) y(t), \quad t\in\cI, \quad \l\in\bC.
\end{equation}
A function $y\in\AC$ is a solution of \eqref{3.1} (resp. \eqref{3.2}) if
equality \eqref{3.1} (resp. \eqref{3.2} holds a.e. on $\cI$. A function
$Y(\cd,\l):\cI\to [\cK,\bH]$ is an operator solution of  equation \eqref{3.2}
if $y(t)=Y(t,\l)h$ is a (vector) solution of this equation for every $h\in\cK$
(here $\cK$ is a Hilbert space with $\dim\cK<\infty$).

The following lemma will be useful in the sequel.
\begin{lemma}\label{lem3.0}
Let $\cK$ be a finite dimensional Hilbert space, let $Y(\cd,\cd):\cI\times \bR\to [\cK,\bH]$ be an operator function such that $Y(\cd,s)$ is a solution of \eqref{3.2} and $Y(a,\cd)$ is a continuous function on $\bR$ and let $\Si(\cd):\bR\to [\cK]$ be a distribution function. Then for each function $g\in\cL_{loc}^2(\Si;\cK)$ the equality
\begin {equation}\label{3.3}
f(t)=\int_\bR Y(t,s)\,d\Si(s)g(s), \quad t\in\cI
\end{equation}
defines a function $f(\cd)\in \AC$ such that
\begin {equation}\label{3.4}
f'(t)=-J\int_\bR (B(t)+s\D(t))Y(t,s)\, d\Si(s) g(s)\quad ({\rm a.e. on}\;\; \cI ).
\end{equation}
\end{lemma}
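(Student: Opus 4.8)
The plan is to prove Lemma~\ref{lem3.0} by treating the right-hand side of \eqref{3.3} as a parameter integral and differentiating under the integral sign, with all analytic subtleties handled by the local integrability of $B(\cd)$, $\D(\cd)$ and the fact that $g$ has compact $\Si$-support. First I would fix a compact subinterval $[a,\b]\subset\cI$ and fix, via Theorem~\ref{th2.8}(1), a scalar measure $\s$ and a density $\Psi(\cd)\geq 0$ so that $\int_\bR Y(t,s)\,d\Si(s)g(s)=\int_\bR Y(t,s)\Psi(s)g(s)\,d\s(s)$ in the sense of \eqref{2.12}; since $g\in\cL_{loc}^2(\Si;\cK)$, there is a compact set $K\subset\bR$ carrying $\s$-essentially all the mass relevant to $g$, so the $s$-integration is effectively over $K$. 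The key structural input is that the solution $Y(\cd,s)$ of \eqref{3.2} satisfies the integral equation
\begin{equation*}
Y(t,s)=Y(a,s)+\int_a^t J^{-1}\bigl(B(u)+s\D(u)\bigr)Y(u,s)\,du,\qquad t\in\cI,
\end{equation*}
obtained by integrating \eqref{3.2} and using $J^{-1}=-J$. A Gronwall-type estimate then gives a bound $\|Y(t,s)\|\leq C(\b)\exp\bigl(C(\b)|s|\bigr)$ uniform for $t\in[a,\b]$, with $C(\b)$ controlled by $\int_a^\b(\|B(u)\|+\|\D(u)\|)\,du$ and by $\sup_{s\in K}\|Y(a,s)\|$, the latter finite by continuity of $Y(a,\cd)$. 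Hence $s\mapsto Y(t,s)\Psi(s)g(s)$ is dominated on $K$ by a fixed $\s$-integrable function independent of $t\in[a,\b]$, which makes $f(\cd)$ well defined and, by dominated convergence, continuous on $[a,\b]$, and therefore on all of $\cI$.

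Next I would establish absolute continuity and compute $f'$. Insert the integral equation for $Y(t,s)$ into \eqref{3.3} and apply the Fubini--Tonelli theorem (legitimate by the domination just obtained, since $\int_a^\b\!\!\int_K(\|B(u)\|+|s|\,\|\D(u)\|)\|Y(u,s)\|\,d\s(s)\,du<\infty$) to write
\begin{equation*}
f(t)=\int_\bR Y(a,s)\,d\Si(s)g(s)+\int_a^t\Bigl(-J\!\int_\bR\bigl(B(u)+s\D(u)\bigr)Y(u,s)\,d\Si(s)g(s)\Bigr)du .
\end{equation*}
The inner $s$-integral defines, by the same domination argument, a locally integrable $\bH$-valued function of $u$; call it $h(u)$. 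Thus $f(t)=f(a)+\int_a^t h(u)\,du$ with $h\in L^1_{loc}(\cI;\bH)$, which is exactly the statement that $f\in\AC$ and $f'(t)=h(t)$ for a.e.\ $t\in\cI$. Unwinding the definition of $h$ yields precisely \eqref{3.4}.

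I expect the main obstacle to be the uniform-in-$t$ domination needed to justify both differentiation under the integral sign and the Fubini interchange: one must verify that the Gronwall bound on $\|Y(t,s)\|$ is integrable against $d\s(s)$ on the compact support set $K$, i.e.\ that $\exp(C(\b)|s|)$ is $\s$-integrable over $K$, which is immediate because $K$ is compact and $\s(K)<\infty$, but it is the point where the compactness of the support of $g$ (encoded in $g\in\cL_{loc}^2(\Si;\cK)$) is genuinely used. Everything else---the integral-equation form of a solution, the use of $J^{-1}=-J$, and the passage from $f(t)=f(a)+\int_a^t h$ to $f\in\AC$ with $f'=h$---is routine. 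A minor point worth a sentence is independence of the construction from the choice of $\s$ and $\Psi$, which follows from Theorem~\ref{th2.8}(2) together with the definition \eqref{2.12} of the $d\Si$-integral.
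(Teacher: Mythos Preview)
Your proposal is correct and follows essentially the same route as the paper's own proof: rewrite \eqref{3.3} via Theorem~\ref{th2.8}(1) as $\int_\bR Y(t,s)\Psi(s)g(s)\,d\s(s)$, invoke the integral equation $Y(t,s)=Y(a,s)-J\int_{[a,t)}(B(u)+s\D(u))Y(u,s)\,du$, justify a Fubini interchange using the compact support of $g$ and the joint continuity of $Y$, and read off $f(t)=f(a)+\int_a^t h(u)\,du$ with $h$ given by the right-hand side of \eqref{3.4}. The only cosmetic difference is that you make the domination step explicit via a Gronwall bound on $\|Y(t,s)\|$, whereas the paper simply notes that $Y(\cd,\cd)$ is continuous on $\cI\times\bR$ and that $\int_\bR\|\Psi(s)g(s)\|\,d\s(s)<\infty$; both lead to the same finiteness \eqref{3.7} needed for Fubini.
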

\begin{proof}
According to \eqref{2.12} the equality \eqref{3.3} means
\begin {equation}\label{3.5}
f(t)=\int_\bR Y(t,s) \Psi(s)g(s)\, d\s (s), \quad t\in\cI,
\end{equation}
where $\Psi$ and $\s$ are defined in Theorem \ref{th2.8}, (1). Since $Y(t,s)$ satisfies
\begin {equation}\label{3.6}
Y(t,s)=Y(a,s)-J\int_{[a,t)}(B(u)+s \D(u))Y(u,s)du, \quad t\in\cI,
\end{equation}
it follows that $Y(\cd,\cd)$ is a continuous function on $\cI\times \bR$. Moreover, one can easily prove that $\int\limits_\bR ||\Psi(s)g(s)||\, d\s (s)<\infty$. Therefore the integral in \eqref{3.5} exists and
\begin {equation}\label{3.7}
\int\limits_{[a,t)\times \bR}||(B(u)+s \D(u))Y(u,s)\Psi(s)g(s)||\,du\,d\s(s)<\infty.
\end{equation}
It follows from \eqref{3.7} and the Fubini  theorem that
\begin{gather}
\int_\bR\left(\int_{[a,t)}(B(u)+s \D(u))Y(u,s)\Psi(s)g(s)\,du\right)d\s(s)=\label{3.8}\\
\int_{[a,t)}\left(\int_\bR(B(u)+s \D(u))Y(u,s)\Psi(s)g(s)\,d\s(s)\right)du.\nonumber
\end{gather}
Now combining \eqref{3.5} with \eqref{3.6} and taking \eqref{3.8} into account one gets
\begin {equation*}
f(t)=C-J \int_{[a,t)}\left(\int_\bR (B(u)+s \D(u))Y(u,s) \Psi(s)g(s)\,d\s(s)\right)\, du,
\end{equation*}
where $C=\int\limits_{\bR} Y(a,s)\Psi(s)g(s)\,d\s(s)$. Hence $f(\cd)\in\AC$ and \eqref{3.4} holds.
\end{proof}
In what follows  we always assume  that  system \eqref{3.1} is definite in the sense of the following definition.
 \begin{definition}\label{def3.1}$\,$\cite{GK}
Symmetric system \eqref{3.1} is called definite if for each $\l\in\bC$ and each
solution $y$ of \eqref{3.2} the equality $\D(t)y(t)=0$ (a.e. on $\cI$) implies
$y(t)=0, \; t\in\cI$.
\end{definition}
Introduce also the following definition.
\begin{definition}\label{def3.1a}
System \eqref{3.1} will be called absolutely definite if
\begin {equation*}
\mu_1(\{t\in\cI: \; \text{the operator}\;\; \D(t)\;\;\text{is invertible}\})>0,
\end{equation*}
where $\mu_1$ is the Lebesgue  measure on $\cI$.
\end{definition}
Clearly, each absolutely definite system is definite. Moreover, one can easily construct definite, but not absolutely definite system  \eqref{3.1} (even with $B(t)\equiv 0$ and continuous $\D(t)$).

As it is known \cite{Orc, Kac03, LesMal03} definite system \eqref{3.1} gives
rise to the \emph{maximal linear relations} $\tma$ and $\Tma$  in  $\lI$ and
$\LI$, respectively. They are given by
\begin {equation*}
\begin{array}{c}
\tma=\{\{y,f\}\in(\lI)^2 :y\in\AC \;\;\text{and}\;\; \qquad\qquad\qquad\qquad \\
\qquad\qquad\qquad\qquad\qquad\quad  J y'(t)-B(t)y(t)=\D(t) f(t)\;\;\text{a.e.
on}\;\; \cI \}
\end{array}
\end{equation*}
and $\Tma=\{\{\pi_\D y,\pi_\D f\}:\{y,f\}\in\tma\}$. Moreover the Lagrange's identity
\begin {equation*}
(f,z)_\D-(y,g)_\D=[y,z]_b - (J y(a),z(a)),\quad \{y,f\}, \; \{z,g\} \in\tma.
\end{equation*}
holds with
\begin {equation}\label{3.9}
[y,z]_b:=\lim_{t \uparrow b}(J y(t),z(t)), \quad y,z \in\dom\tma.
\end{equation}
Formula \eqref{3.9} defines the skew-Hermitian  bilinear form $[\cd,\cd]_b $ on
$\dom \tma$. By using this form one defines the \emph{minimal relations} $\tmi$
in $\lI$ and $\Tmi$ in $\LI$ via
\begin {equation*}
\tmi=\{\{y,f\}\in\tma: y(a)=0 \;\; \text{and}\;\; [y,z]_b=0\;\;\text{for
each}\;\; z\in \dom \tma \}.
\end{equation*}
and $\Tmi=\{\{\pi_\D y,\pi_\D f\}:\{y,f\}\in\tmi\} $. According to \cite{Orc,Kac03,
LesMal03} $\Tmi$ is a closed symmetric linear relation in $\LI$ and
$\Tmi^*=\Tma$.

Denote by $\cN_\l,\; \l\in\bC,$ the linear space of solutions of the
homogeneous system \eqref{3.2} belonging to $\lI$ and let $\gN_\l(\Tmi)$ be the defect subspace of $\Tmi$. Since system \eqref{3.1} is definite, it follows that $\dim \gN_\l(\Tmi)=\dim \cN_\l$. Hence $\Tmi$ has finite (not necessarily equal) deficiency indices $n_\pm(\Tmi)=\dim \cN_\l\leq\dim \bH,\; \l\in\bC_\pm$.

The following assertion is immediate from definitions of $\Tmi$ and $\Tma$.
\begin{assertion}\label{ass3.2}
{\rm (1)} The multivalued part $\mul \Tmi$ of the minimal relation $\Tmi$ is the set of all $\wt f\in\LI$  such that for some (and hence for all) $f\in\wt f$ the  solution $y$ of Eq. \eqref{3.2} with $y(a)=0$ satisfies
\begin {equation*}
y\in\lI, \quad \D(t)y(t)=0\;\; (\text{a.e. on }\;\;\cI)\;\;\text{and}\;\; [y,z]_b=0, \;\;z\in\dom\tma.
\end{equation*}
{\rm (2)} The equality $\mul \Tmi=\mul\Tma$ holds if and only if for each function $y\in\dom\tma$ the equality $\D(t)y(t)=0$ (a.e. on $\cI$) yields $y(a)=0$ and $[y,z]_b=0, \;z\in\dom\tma$.
\end{assertion}
\begin{remark}\label{rem3.3}
It  is known (see e.g. \cite{LesMal03}) that the maximal relation $\Tma$
induced by the definite symmetric system \eqref{3.1} possesses the following
 property: for any $\{\wt y, \wt f \}\in \Tma $ there exists
a unique function $y\in \AC \cap \lI $ such that $y\in \wt y$ and $\{y,f\}\in
\tma$ for any $f\in\wt f$. Below we associate such a function $ y\in \AC \cap
\lI$ with each pair $\{\wt y, \wt f\}\in\Tma$.
\end{remark}
\subsection{Decomposing boundary triplets}\label{sub3.3}
In this subsection we provide some results from \cite{AlbMalMog13}.
\begin{lemma} \label{lem3.4}
If $n_-(\Tmi)\leq n_+(\Tmi)$, then there exist a finite dimensional  Hilbert  space $\wt \cH_b$, a subspace $\cH_b\subset \wt \cH_b$  and a surjective linear mapping
\begin{gather}\label{3.12}
\G_b=\begin{pmatrix}\G_{0b}\cr  \wh\G_b \cr  \G_{1b}\end{pmatrix} :\dom\tma\to
\wt \cH_b\oplus\wh H \oplus \cH_b
\end{gather}
such that for all $y,z \in \dom\tma$ the following identity is valid
\begin {equation} \label{3.13}
 [y,z]_b=(\G_{0b}y,\G_{1b}z)_{\wt\cH_b}-(\G_{1b}y,\G_{0b}z)_{\wt\cH_b}
+i(P_{\cH_b^\perp}\G_{0b}y, P_{\cH_b^\perp}\G_{0b}z)_{\wt\cH_b}+ i (\wh\G_b y,
\wh\G_b z)_{\wh H}
\end{equation}
(here $\cH_b^\perp=\wt\cH_b\ominus \cH_b$). Moreover, in the case $n_+(\Tmi)= n_-(\Tmi)$ (and only in this case) one has $\wt \cH_b=\cH_b$ and the identity \eqref{3.13} takes the form
\begin {equation*}
[y,z]_b=(\G_{0b}y,\G_{1b}z)_{\cH_b}-(\G_{1b}y,\G_{0b}z)_{\cH_b}+  i (\wh\G_b y,
\wh\G_b z)_{\wh H}
\end{equation*}
\end{lemma}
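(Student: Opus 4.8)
The plan is to reduce this boundary-value statement to the abstract existence result for boundary triplets (Proposition \ref{pr2.6}-level machinery, more precisely the result quoted before Proposition \ref{pr2.11}: a boundary triplet $\Pi_+=\bta$ for $A^*$ exists iff $n_-(A)\leq n_+(A)$, with $\dim\cH_1=n_-(A)$ and $\dim\cH_0=n_+(A)$). First I would apply that result to $A=\Tmi$: since by hypothesis $n_-(\Tmi)\leq n_+(\Tmi)$, there is a boundary triplet $\Pi_+=\{\cH_0,\G_0,\G_1\}$ for $\Tma=\Tmi^*$ with $\dim\cH_1=n_-(\Tmi)$, $\dim\cH_0=n_+(\Tmi)$, $\cH_2:=\cH_0\ominus\cH_1$, and Green's identity
\begin{equation*}
(f',g)_\D-(f,g')_\D=(\G_1\wh f,\G_0\wh g)_{\cH_0}-(\G_0\wh f,\G_1\wh g)_{\cH_0}+i(P_2\G_0\wh f,P_2\G_0\wh g)_{\cH_2}
\end{equation*}
for all $\wh f=\{f,f'\},\wh g=\{g,g'\}\in\Tma$. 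Comparing this with the Lagrange identity for $\tma$, namely $(f,z)_\D-(y,g)_\D=[y,z]_b-(Jy(a),z(a))$, the next step is to split off the contribution of the \emph{regular} endpoint $a$. Since $a$ is a regular endpoint, the evaluation map $\{y,f\}\mapsto y(a)\in\bH$ is well defined on $\dom\tma$ (via the AC-representative of Remark \ref{rem3.3}), surjective onto $\bH$, and the form $(y,z)\mapsto (Jy(a),z(a))$ is exactly the skew-Hermitian form associated with the invertible skew-Hermitian operator $J$ on $\bH=H\oplus\wh H\oplus H$. So I would produce a "regular" boundary triplet piece $\{H\oplus\wh H\oplus H,\ \Gamma_{0a},\Gamma_{1a}\}$ decoding $y(a)$ in the standard way dictated by the block form \eqref{1.3} of $J$ (namely $\Gamma_{0a}y=\{-y_1(a),\ \tfrac{1}{\sqrt2}\wh y(a)\}$-type and $\Gamma_{1a}y=\{y_0(a),\ \tfrac{1}{\sqrt2}\wh y(a)\}$-type, with the $\wh H$-part producing the $i(\cdot,\cdot)_{\wh H}$ term because the middle block of $J$ is $iI_{\wh H}$).

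The core of the argument is then: subtract. Define $\G_b$ on $\dom\tma$ as the "difference" of the global triplet $\Pi_+$ and the regular-endpoint triplet at $a$; concretely, $\G_b y$ should be the boundary values carried by $\G_j\wh f$ that are not accounted for by $y(a)$. I would set $\wt\cH_b$, $\cH_b$ so that $\dim\wt\cH_b=\dim\cH_0-\dim H-\dim\wh H$ and $\dim\cH_b=\dim\cH_1-\dim H$ (these are $\geq 0$ because $a$ is regular forces $n_\pm(\Tmi)\geq \dim H$, with the $\wh H$-summand contributing equally to both indices), pick a decomposition $\cH_0=(H\oplus\wh H\oplus H)\oplus\wt\cH_b$ compatible with the $\cH_1\subset\cH_0$ structure, and let $\G_{0b},\wh\G_b,\G_{1b}$ be the components of $\G_0,\G_1$ landing in $\wt\cH_b,\wh H,\cH_b$ respectively. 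Surjectivity of $\G_b$ onto $\wt\cH_b\oplus\wh H\oplus\cH_b$ follows from surjectivity of $\G=(\G_0,\G_1)^\top$ together with the fact that the $a$-part can be prescribed independently (again because $a$ is regular: one can choose a solution with prescribed $y(a)$ and prescribed behaviour near $b$). Subtracting the Green identity of $\Pi_+$ from the Lagrange identity and cancelling the $(Jy(a),z(a))$ term — here is where the block structure of $J$ and the matching of the $\cH_2$-term with the $\wh H$-term and the $\cH_b^\perp$-term has to be done carefully — yields exactly \eqref{3.13}. Finally, when $n_+(\Tmi)=n_-(\Tmi)$ we have $\cH_1=\cH_0$, hence $\cH_2=\{0\}$, hence $\wt\cH_b=\cH_b$ and $P_{\cH_b^\perp}=0$, so the $i(P_{\cH_b^\perp}\G_{0b}y,P_{\cH_b^\perp}\G_{0b}z)$ term drops and \eqref{3.13} collapses to the displayed equal-indices form; the converse (equal indices is forced by $\wt\cH_b=\cH_b$) is automatic from $\dim\wt\cH_b-\dim\cH_b=n_+(\Tmi)-n_-(\Tmi)$.

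The main obstacle I anticipate is the bookkeeping in the subtraction step: one must choose the identification of $\cH_0$ with $(H\oplus\wh H\oplus H)\oplus\wt\cH_b$ so that (i) the $i$-twisted $\wh H$-term coming from the middle block $iI_{\wh H}$ of $J$ at the endpoint $a$ does not get entangled with the $i(P_2\G_0\cdot,P_2\G_0\cdot)_{\cH_2}$ term of the abstract Green identity, and (ii) the leftover twisted term at $b$ is genuinely supported on $\cH_b^\perp$. This is essentially a linear-algebra normal-form computation for a skew-Hermitian form on a finite-dimensional space with a distinguished invertible "regular" block $J$ split off, and it is where the proof in \cite{AlbMalMog13} does the real work; everything else is an application of the quoted existence theorem for boundary triplets plus the regularity of the endpoint $a$.
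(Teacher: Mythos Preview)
The paper does not prove Lemma~\ref{lem3.4}; it is quoted verbatim from \cite{AlbMalMog13} (see the opening line of Subsection~\ref{sub3.3}). So there is no ``paper's own proof'' to compare against, and what you have written is an independent sketch of the argument carried out in that reference.

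That said, your sketch runs in the \emph{opposite} direction from the way the material is organised here and in \cite{AlbMalMog13}. In the paper, Lemma~\ref{lem3.4} comes first and is then \emph{used} to construct the decomposing boundary triplet in Proposition~\ref{pr3.5}; you instead start from an abstract boundary triplet $\Pi_+$ for $\Tma$ and try to ``subtract off'' the regular endpoint $a$. The difficulty with your direction is exactly the one you flag: an arbitrary abstract $\Pi_+$ has no reason to respect the splitting into an $a$--part and a $b$--part, so the projections ``$\G_{0b},\wh\G_b,\G_{1b}$'' you want to read off from $\G_0,\G_1$ are not a priori defined. What the abstract triplet really gives you is only the \emph{inertia} $(n_+(\Tmi),n_-(\Tmi))$ of the Hermitian form $-i\bigl([y,z]_b-(Jy(a),z(a))\bigr)$ on $\dom\tma/\dom\tmi$. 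The natural route (and the one in \cite{AlbMalMog13}) is then direct: use the regularity of $a$ to see that $y\mapsto y(a)$ is surjective and can be prescribed independently of the behaviour near $b$, so the boundary form splits as an orthogonal sum; compute the inertia of $i(J\cd,\cd)$ from the block form \eqref{1.3} of $J$ (it is $(\dim H,\dim H+\dim\wh H)$); subtract inertias to get the inertia of $-i[\cd,\cd]_b$; and finally diagonalise this nondegenerate skew-Hermitian form on the finite-dimensional quotient to produce $\G_b$ in the shape \eqref{3.13}.

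Two concrete slips in your bookkeeping confirm that the subtraction step needs care. First, your proposed decomposition $\cH_0=(H\oplus\wh H\oplus H)\oplus\wt\cH_b$ is wrong: by \eqref{3.14} one has $\cH_0=H_0\oplus\wt\cH_b=(H\oplus\wh H)\oplus\wt\cH_b$, with no second copy of $H$. Second, your dimension count $\dim\cH_b=\dim\cH_1-\dim H$ is off by $\dim\wh H$: from \eqref{3.14}, $\dim\cH_b=\dim\cH_1-\dim H_0=n_-(\Tmi)-\dim H-\dim\wh H$, and likewise $\dim\wt\cH_b=n_+(\Tmi)-\dim H-\dim\wh H$. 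These are precisely the numbers that fall out of the inertia computation above, and they make the final equivalence $\wt\cH_b=\cH_b\iff n_+(\Tmi)=n_-(\Tmi)$ transparent.
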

Up to the end of this subsection we assume that:

(A1) $n_-(\Tmi)\leq n_+(\Tmi)$

(A2) $\wt\cH_b$ and $\cH_b(\subset\wt\cH_b)$ are finite dimensional  Hilbert  spaces and $\G_b$ is a surjective linear mapping \eqref{3.12} satisfying \eqref{3.13}.

For a function $y\in\dom\tma$ we let
\begin{gather}
\G_0'y=\{ - y_1(a),\;i(\wh y(a)-\wh\G_b y),\;
\G_{0b}y\}\in  H\oplus\wh H\oplus\wt\cH_b\label{3.13.1}\\
\G_1'y=\{y_0(a),\; \tfrac 1 2(\wh y(a)+\wh\G_b y), \;
-\G_{1b}y\}\in H\oplus\wh H\oplus \cH_b\label{3.13.2},
\end{gather}
where $y_0(a), \wh y(a)$ and $y_1(a)$ are taken from the representation
$y(t)=\{y_0(t),\,\wh y(t), \, y_1(t) \}(\in H\oplus\wh H \oplus
H)$ of $y$.
\begin{proposition}\label{pr3.5}
A collection $\Pi_+=\bta$ with
\begin{gather}
\cH_0=\underbrace{H\oplus\wh H}_{H_0}\oplus\wt\cH_b=H_0 \oplus\wt\cH_b, \qquad
\cH_1=\underbrace{H\oplus\wh H}_{H_0}\oplus\cH_b=H_0\oplus \cH_b\label{3.14}\\
\G_0\{\wt y, \wt f\}=\G_0'y,\qquad \G_1\{\wt y, \wt f\}=\G_1'y,\quad \{\wt y, \wt
f\}\in\Tma \label{3.15}
\end{gather}
is a (so called decomposing) boundary triplet for $\Tma$.
In  \eqref{3.15} $y\in\dom\tma$ is a function corresponding to $\{\wt y, \wt f\}\in\Tma$ in accordance with Remark \ref{rem3.3}.
\end{proposition}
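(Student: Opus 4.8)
The plan is to verify directly that the collection $\Pi_+=\{\cH_0\oplus\cH_1,\G_0,\G_1\}$ defined by \eqref{3.14}, \eqref{3.15} satisfies the two requirements of Definition \ref{def2.10}: surjectivity of the combined map $\G=(\G_0,\G_1)^\top:\Tma\to\cH_0\oplus\cH_1$, and the generalized Green identity with the extra term $i(P_2\G_0\wh f,P_2\G_0\wh g)_{\cH_2}$, where here $\cH_2=\cH_0\ominus\cH_1=\wt\cH_b\ominus\cH_b=\cH_b^\perp$. First I would record, from the Lagrange identity stated after \eqref{3.9}, that for $\{\wt y,\wt f\},\{\wt z,\wt g\}\in\Tma$ with associated functions $y,z\in\dom\tma$ (Remark \ref{rem3.3}),
\begin{equation*}
(\wt f,\wt z)_\D-(\wt y,\wt g)_\D=[y,z]_b-(Jy(a),z(a)).
\end{equation*}
So the whole proof reduces to an algebraic identity: show that $[y,z]_b-(Jy(a),z(a))$ equals $(\G_1'y,\G_0'z)-(\G_0'y,\G_1'z)+i(P_{\cH_b^\perp}\G_0'y,P_{\cH_b^\perp}\G_0'z)$, using the boundary-value formulas \eqref{3.13.1}, \eqref{3.13.2} and the identity \eqref{3.13} from Lemma \ref{lem3.4}.

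The key computation splits along the direct sum $\cH_0=H\oplus\wh H\oplus\wt\cH_b$. For the $H$-block: writing $y(t)=\{y_0(t),\wh y(t),y_1(t)\}$ and using the block form \eqref{1.3} of $J$, one gets $(Jy(a),z(a))=-(y_1(a),z_0(a))+i(\wh y(a),\wh z(a))+(y_0(a),z_1(a))$, and the terms $-(y_1(a),z_0(a))+(y_0(a),z_1(a))$ are exactly matched (up to sign) by the $H$-components of $(\G_1'y,\G_0'z)-(\G_0'y,\G_1'z)$, since $\G_0'$ carries $-y_1(a)$ in its $H$-slot and $\G_1'$ carries $y_0(a)$. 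For the $\wt\cH_b$-block: the $\wt\cH_b$-component of $(\G_1'y,\G_0'z)-(\G_0'y,\G_1'z)$ is $-(\G_{1b}y,\G_{0b}z)-(\G_{0b}y,-\G_{1b}z)=(\G_{0b}y,\G_{1b}z)-(\G_{1b}y,\G_{0b}z)$, and the extra term contributes $i(P_{\cH_b^\perp}\G_{0b}y,P_{\cH_b^\perp}\G_{0b}z)$; together these reproduce the first three terms on the right of \eqref{3.13}. For the $\wh H$-block: the $\wh H$-slots of $\G_0',\G_1'$ are $i(\wh y(a)-\wh\G_b y)$ and $\tfrac12(\wh y(a)+\wh\G_b y)$; a short bilinear expansion of $(\tfrac12(\wh y(a)+\wh\G_b y),i(\wh z(a)-\wh\G_b z))-(i(\wh y(a)-\wh\G_b y),\tfrac12(\wh z(a)+\wh\G_b z))$ should yield $i(\wh\G_b y,\wh\G_b z)_{\wh H}-i(\wh y(a),\wh z(a))_{\wh H}$ (the cross terms cancel), and the $-i(\wh y(a),\wh z(a))$ cancels the $i(\wh y(a),\wh z(a))$ from $-(Jy(a),z(a))$, while $i(\wh\G_b y,\wh\G_b z)$ matches the last term of \eqref{3.13}. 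Summing the three blocks and invoking \eqref{3.13} completes the Green identity; one must be slightly careful about whether the scalar products are linear in the first or second argument and about the factors $i$ and $\tfrac12$, which is precisely why those specific coefficients appear in \eqref{3.13.1}, \eqref{3.13.2}.

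For surjectivity of $\G$, I would argue as follows. By the definition of $\tmi$, the map $\dom\Tma\ni\{\wt y,\wt f\}\mapsto\G y:=\{y_0(a),y_1(a),\wh y(a),\G_b y\}$ has kernel contained in (and in fact, by definition of $\Tmi$, equal to) $\dom\Tmi$, so it descends to an injective map on $\dom\Tma/\dom\Tmi$, whose dimension is $n_+(\Tmi)+n_-(\Tmi)$. Meanwhile $\G_b$ is surjective onto $\wt\cH_b\oplus\wh H\oplus\cH_b$ by Lemma \ref{lem3.4}, and the boundary values $y_0(a),y_1(a),\wh y(a)$ at the regular endpoint $a$ can be prescribed arbitrarily in $\bH=H\oplus\wh H\oplus H$ independently of $\G_b y$ (this is a standard consequence of existence/uniqueness for the initial value problem for \eqref{3.1} together with the surjectivity of $\G_b$, which only involves behaviour near $b$); hence the full map into $\bH\oplus(\wt\cH_b\oplus\wh H\oplus\cH_b)$ is surjective. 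Since $\G_0',\G_1'$ as defined in \eqref{3.13.1}, \eqref{3.13.2} are obtained from $(y_0(a),y_1(a),\wh y(a),\G_b y)$ by an invertible linear change of variables (the map $(\wh y(a),\wh\G_b y)\mapsto(i(\wh y(a)-\wh\G_b y),\tfrac12(\wh y(a)+\wh\G_b y))$ is a bijection of $\wh H\oplus\wh H$, and the remaining components are identity maps up to sign), the map $\G=(\G_0',\G_1')^\top:\dom\Tma\to\cH_0\oplus\cH_1$ is surjective as well. This establishes that $\Pi_+$ is a boundary triplet for $\Tma=\Tmi^*$.

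The main obstacle I anticipate is not conceptual but bookkeeping: getting every sign, every factor of $i$, and every factor of $\tfrac12$ correct in the $\wh H$-block of the Green identity, and making sure the convention for the inner product (conjugate-linearity in which slot) is used consistently so that the cross terms in the $\wh H$ expansion genuinely cancel and the surviving terms have the right signs to match \eqref{3.13}. A secondary point requiring care is the rigorous justification that the regular-endpoint data at $a$ and the singular data $\G_b y$ near $b$ can be prescribed independently; this relies on the definiteness of the system and on the construction of $\G_b$ in \cite{AlbMalMog13}, so one would cite Lemma \ref{lem3.4} and the underlying results there rather than reprove it.
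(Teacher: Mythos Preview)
Your verification is correct. Note, however, that the paper itself does not give a proof of Proposition~\ref{pr3.5}: it is one of the results quoted from \cite{AlbMalMog13}, as announced in the opening sentence of Subsection~\ref{sub3.3}. So there is no ``paper's own proof'' to compare against beyond that citation.

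That said, your direct check is precisely the argument one must supply. The block-by-block verification of the Green identity is sound; in particular, with the convention that the inner product is conjugate-linear in the second argument, the $\wh H$-block indeed yields $i(\wh\G_b y,\wh\G_b z)_{\wh H}-i(\wh y(a),\wh z(a))_{\wh H}$, matching the last term of \eqref{3.13} and cancelling the $\wh H$-contribution of $-(Jy(a),z(a))$. For surjectivity, your argument via the independence of the regular-endpoint data $y(a)\in\bH$ and the singular data $\G_b y$ is the right one; the preliminary dimension count $\dim(\Tma/\Tmi)=n_+(\Tmi)+n_-(\Tmi)$ you mention is not actually needed once you argue surjectivity directly, though it does no harm. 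The one step you defer to the literature---that $y(a)$ and $\G_b y$ can be prescribed independently for functions in $\dom\tma$---is exactly the construction carried out in \cite{AlbMalMog13}, so citing it is appropriate and is in effect what the paper does.
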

\begin{proposition}\label{pr3.6}
Let $\cP_0,\; \wh\cP$ and $\cP_1$ be the orthoprojectors in $\bH$ onto the first, second and third component respectively in the decomposition $\bH=H\oplus\wh H\oplus H$ (see \eqref{3.0.1}). Then:

{\rm (1)} For every $\l\in\bC_+$  there exists a unique pair of operator solutions $v_0(\cd,\l)\in\lo{H_0}$ and $u(\cd,\l)\in\lo{\wt\cH_b}$ of Eq. \eqref{3.2} satisfying
\begin{gather*}
\cP_1 v_0(a,\l)=-P_{H_0,H}, \quad i (\wh \cP v_0(a,\l)h_0-\wh\G_b
(v_0(\cd,\l)h_0))=P_{H_0,\wh H}, \quad \G_{0b} (v_0(\cd,\l)h_0)=0\\
\cP_1 u(a,\l)=0, \quad i(\wh\cP u(a,\l)h_b-\wh\G_b (u(\cd,\l)h_b))=0,\quad \G_{0b} (u(\cd,\l)h_b)=h_b
\end{gather*}
for all $h_0\in H_0$ and $h_b\in \wt\cH_b$. Here $P_{H_0,H}\in [H_0,H]$ and $P_{H_0,\wh H}\in [H_0,\wh H]$ are the orthoprojectors in $H_0(=H\oplus\wh H)$ onto $H$ and $\wh H$ respectively.

{\rm (2)} The Weyl function $M_+(\cd)$ of the decomposing boundary triplet $\Pi_+=\bta$  for $\Tma $ (see Definition \ref{def2.12}) admits the representation
\begin{gather}
M_+(\l)=\begin{pmatrix} m_0(\l )& M_{2+}(\l) \cr M_{3+}(\l) & M_{4+}(\l)
\end{pmatrix}: \underbrace{H_0\oplus\wt\cH_b}_{\cH_0}\to \underbrace{H_0
\oplus\cH_b}_{\cH_1},\;\;\;\l\in\bC_+\label{3.18}\\
m_0(\l)=(\cP_0 +\wh \cP)v_0(a,\l)+\tfrac i 2 P_{\wh H}, \;\;\;\;\; M_{2+}(\l)
=(\cP_0+\wh\cP)u(a,\l)\label{3.19}\\
M_{3+}(\l)=-\G_{1b}v_0(\l), \qquad M_{4+}(\l)=-\G_{1b}u(\l).\label{3.20}
\end{gather}
If in addition $n_+(\Tmi)=n_-(\Tmi)$, then: {\rm (1)} the solutions $v_0(\cd,\l)$ and $u(\cd,\l)$ are defined for  $\l\in\CR$;  {\rm (2)} $\Pi_+$ turns into an ordinary boundary triplet $\Pi=\bt$ for $\Tma$ with $\cH=H_0\oplus\cH_b$ and the  Weyl function $M(\cd)$ of the triplet $\Pi$  is of the form
\begin {gather}
M(\l)=\begin{pmatrix} m_0(\l )& M_{2}(\l) \cr M_{3}(\l) & M_{4}(\l)
\end{pmatrix}: H_0\oplus \cH_b\to H_0\oplus\cH_b,
\;\;\;\l\in\CR\label{3.21}\\
m_0(\l)=(\cP_0+\wh\cP)v_0(a,\l)+\tfrac i 2 P_{\wh H},\qquad
M_{2}(\l)=(\cP_0+\wh\cP)u(a,\l),\label{3.22}\\
M_{3}(\l)=-\G_{1b}v_0(\l), \qquad M_{4}(\l)=-\G_{1b}u(\l).\label{3.23}
\end{gather}
\end{proposition}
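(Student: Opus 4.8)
The plan is to read off both statements from the general theory of boundary triplets, applied to the decomposing boundary triplet $\Pi_+=\bta$ for $\Tma$ constructed in Proposition \ref{pr3.5}. The bridge is the observation that, since system \eqref{3.1} is definite, the assignment $y\mapsto\{\pi_\D y,\l\pi_\D y\}$ is a linear bijection of the space $\cN_\l$ of solutions of \eqref{3.2} lying in $\lI$ onto $\wh\gN_\l(\Tmi)$, and that under this identification --- together with \eqref{3.15} and Remark \ref{rem3.3} --- the values $\G_0\{\pi_\D y,\l\pi_\D y\}$ and $\G_1\{\pi_\D y,\l\pi_\D y\}$ are precisely $\G_0'y$ and $\G_1'y$ given by \eqref{3.13.1}, \eqref{3.13.2}. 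Hence every statement about $\G_0$, $\G_1$ and $M_+$ on defect subspaces can be rephrased in the language of solutions of \eqref{3.2}.

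I would prove~(1) first. It is part of the boundary triplet framework (\cite{Mog06.2}) that the restriction $\G_0\up\wh\gN_\l(\Tmi)$ is a bijection onto $\cH_0$ for every $\l\in\bC_+$ (here $\dim\cH_0=n_+(\Tmi)=\dim\cN_\l$). Translating, for each such $\l$ and each $\{h_0,h_b\}\in H_0\oplus\wt\cH_b=\cH_0$ there is a unique $y\in\cN_\l$ with $\G_0'y=\{h_0,h_b\}$. Taking $h_b=0$ and letting $h_0$ vary, resp.\ $h_0=0$ and letting $h_b$ vary, linearity and uniqueness of $y$ produce operator solutions $v_0(\cd,\l)\in\lo{H_0}$ and $u(\cd,\l)\in\lo{\wt\cH_b}$ singled out by $\G_0'(v_0(\cd,\l)h_0)=\{h_0,0\}$ and $\G_0'(u(\cd,\l)h_b)=\{0,h_b\}$; spelling out \eqref{3.13.1} for these two equalities (and decomposing $h_0$ into its $H$- and $\wh H$-parts) gives exactly the three normalization conditions listed for $v_0$ and for $u$. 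Uniqueness of the pair $\{v_0(\cd,\l),u(\cd,\l)\}$ is simply the uniqueness in the abstract problem.

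For~(2), Proposition \ref{pr2.11} characterizes $M_+(\l)$, $\l\in\bC_+$, by $M_+(\l)\G_0'y=\G_1'y$ for all $y\in\cN_\l$. Substituting $y=v_0(\cd,\l)h_0$ (so that $\G_0'y=\{h_0,0\}$) and $y=u(\cd,\l)h_b$ (so that $\G_0'y=\{0,h_b\}$) displays the two block-columns of $M_+(\l)$ as $\G_1'(v_0(\cd,\l)h_0)$ and $\G_1'(u(\cd,\l)h_b)$, which is the block form \eqref{3.18}. It remains to evaluate $\G_1'$ via \eqref{3.13.2}: the $\cH_b$-components are immediately $-\G_{1b}v_0(\l)=M_{3+}(\l)$ and $-\G_{1b}u(\l)=M_{4+}(\l)$, that is, \eqref{3.20}. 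For the $H_0$-component one invokes the second normalization condition; for $v_0$ it reads $i(\wh\cP v_0(a,\l)h_0-\wh\G_b(v_0(\cd,\l)h_0))=P_{H_0,\wh H}h_0$, which permits replacing $\tfrac 1 2(\wh y(a)+\wh\G_b y)$ by $\wh\cP v_0(a,\l)h_0+\tfrac i 2 P_{\wh H}h_0$, so that the $H_0$-component of $\G_1'(v_0(\cd,\l)h_0)$ equals $(\cP_0+\wh\cP)v_0(a,\l)h_0+\tfrac i 2 P_{\wh H}h_0=m_0(\l)h_0$; for $u$ the same condition has right-hand side $0$ and gives $(\cP_0+\wh\cP)u(a,\l)=M_{2+}(\l)$. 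This establishes \eqref{3.19}. Finally, if $n_+(\Tmi)=n_-(\Tmi)$, Lemma \ref{lem3.4} gives $\wt\cH_b=\cH_b$, so $\cH_0=\cH_1=H_0\oplus\cH_b=:\cH$ and, by Remark \ref{rem2.13}, $\Pi_+$ becomes an ordinary boundary triplet $\Pi=\bt$ for $\Tma$ whose Weyl function $M(\cd):\CR\to[\cH]$ is Nevanlinna and extends $M_+(\cd)$; since for an ordinary triplet $\G_0\up\wh\gN_\l(\Tmi)$ is a bijection onto $\cH$ for all $\l\in\CR$, the argument of~(1) now yields $v_0(\cd,\l),u(\cd,\l)$ for all $\l\in\CR$, and the computation just performed gives \eqref{3.21}--\eqref{3.23}.

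I do not expect any genuinely hard step: the content is entirely the dictionary between the abstract boundary-triplet apparatus and solutions of \eqref{3.2}. The part demanding the most care is the bookkeeping in \eqref{3.13.1}--\eqref{3.13.2} --- in particular, checking that combining the $\tfrac 1 2(\wh y(a)+\wh\G_b y)$ entry of $\G_1'$ with the $i(\wh\cP v_0(a,\l)-\wh\G_b v_0(\cd,\l))$ normalization of $v_0$ produces precisely the extra term $\tfrac i 2 P_{\wh H}$ in $m_0(\l)$, and keeping the subspaces $\wt\cH_b$ and $\cH_b$ apart so that $M_+(\l)$ is indeed seen to map $\cH_0$ into $\cH_1$.
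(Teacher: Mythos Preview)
Your proposal is correct and follows the natural route: translate the abstract bijectivity of $\G_0\up\wh\gN_\l(\Tmi)$ onto $\cH_0$ (for $\l\in\bC_+$) into the existence and uniqueness of $v_0(\cd,\l)$ and $u(\cd,\l)$, and then read off the block entries of $M_+(\l)$ from the defining relation $\G_1'y=M_+(\l)\G_0'y$ applied to these solutions. The bookkeeping with the $\wh H$-component --- eliminating $\wh\G_b y$ via the normalization to obtain the extra $\tfrac i 2 P_{\wh H}$ --- is done correctly.

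The paper itself does not prove Proposition \ref{pr3.6}; Subsection \ref{sub3.3} opens with ``In this subsection we provide some results from \cite{AlbMalMog13}'' and the proposition is simply quoted from that reference. Your argument is precisely the computation one would expect to find there (and indeed the argument in \cite[Proposition 4.5]{AlbMalMog13} proceeds along exactly these lines), so there is no meaningful methodological difference to report.
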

\begin{remark}\label{rem3.7}
According to \cite[Proposition 4.5]{AlbMalMog13} the Weyl function $M_-(\cd)$ of the decomposing boundary triplet $\Pi_+$ also admits the representation in terms of boundary values of respective operator solutions of Eq. \eqref{3.2} (cf. \eqref{3.18}--\eqref{3.20} for $M_+(\l)$).
\end{remark}
\section{Pseudospectral and spectral functions of symmetric systems}
\subsection{$q$-pseudospectral functions}
In what follows we put $\gH:=\LI$ and denote by $\gH_b$ the set of all $\wt f\in\gH$ with the following property: there exists $\b_{\wt f}\in\cI$ such that for some (and hence for all) function $f\in\wt f$ the equality $\D(t)f(t)=0$ holds a.e. on $(\b_{\wt f}, b)$.

Denote by $Y_0(\cd,\l)$ the $[\bH]$-valued operator solution of \eqref {3.2} satisfying $Y_0(a,\l)=I_\bH$. With each $\wt f\in\gH_b$ we associate the function $\wh f(\cd):\bR\to\bH$ given by
\begin{equation}\label{4.1}
\wh f(s)=\int_\cI Y_0^*(t,s)\D(t)f(t)\,dt,\quad f(\cd)\in\wt f.
\end{equation}
By using the well known properties of the solution $Y_0(\cd,\l)$ one can easily prove that $\wh f(\cd)$ is a continuous (and even holomorphic) function on $\bR$.

Recall that an operator $V\in [\gH_1,\gH_2]$ is called a partial isometry if $||Vf||=||f||$ for all $f\in\gH_1\ominus\ker V$.
\begin{definition}\label{def4.1}
A distribution function $\Si(\cd):\bR\to [\bH]$ will be called a $q$-pseudospectral function of the system \eqref {3.1} if $\wh f\in\lS$ for all $\wt f\in\gH_b$ and the operator $V_b \wt f:=\pi_\Si\wh f, \; \wt f\in\gH_b,$ admits a continuation to a partial isometry $V=V_\Si\in [\gH,\LS]$.

The operator $V=V_\Si$ will be  called the Fourier transform corresponding to $\Si(\cd)$.
\end{definition}
Clearly, if $\Si(\cd)$ is a $q$-pseudospectral function, then for each $f(\cd)\in\lI$ there exists a unique $\wt g (=V_\Si \pi_\D f)\in \LS$ such that for each function $g(\cd)\in\wt g$ one has
\begin{equation*}
\lim\limits_{\b\uparrow b}\Bigl |\Bigl |g(\cd)-\int\limits_{[a,\b)} Y_0^*(t,\cd)\D(t)f(t)\,dt\Bigr|\Bigr|_{\lS}=0.
\end{equation*}
\begin{proposition}\label{pr4.3}
Let $\Si(\cd)$ be a $q$-pseudospectral function and let $V=V_{\Si}$ be the corresponding Fourier transform. Then for each $\wt g\in L_{loc}^2(\Si;\bH)$ the function
\begin{equation*}
f_{\wt g}(t):=\int_\bR Y_0(t,s)\, d \Si(s) g(s), \quad g(\cd)\in\wt g
\end{equation*}
belongs to $\lI$ and $V^*\wt g=\pi_\D f_{\wt g}(\cd)$. Therefore
\begin{equation*}
V^*\wt g=\pi_\D \left( \int_\bR Y_0(\cd,s)\, d \Si(s) g(s)\right),\quad \wt g\in \LS, \;\; g(\cd)\in\wt g,
\end{equation*}
where the integral converges in the seminorm of $\lI$.
\end{proposition}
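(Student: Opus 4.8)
The plan is to first establish the claim for $\wt g\in L^2_{loc}(\Si;\bH)$ and then pass to the general case by a density argument. For the compactly supported case, note that $Y_0(\cd,s)$ is an operator solution of \eqref{3.2} with $Y_0(a,\cd)=I_\bH$ continuous on $\bR$; hence Lemma \ref{lem3.0} applies and shows that for $g(\cd)\in\wt g\in L^2_{loc}(\Si;\bH)$ the function $f_{\wt g}(t)=\int_\bR Y_0(t,s)\,d\Si(s)g(s)$ is well defined and lies in $\AC$. The first point to verify is that $f_{\wt g}\in\lI$. I would do this by a direct computation of $(f_{\wt g},f_{\wt g})_\D$: writing $f_{\wt g}(t)=\int_\bR Y_0(t,s)\Psi(s)g(s)\,d\s(s)$ (notation of Theorem \ref{th2.8}), inserting this into $\int_\cI(\D(t)f_{\wt g}(t),f_{\wt g}(t))\,dt$, using Fubini (justified since the support of $g$ is compact and $\wt g\in\LS$), and recognizing the inner $t$-integral $\int_\cI Y_0^*(t,s')\D(t)Y_0(t,s)\,dt$. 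Restricting $t$ to a compact $[a,\b]$ first and using that $\gH_b\ni f\mapsto\wh f$ has the stated isometry properties, one gets a uniform bound; since $\D\geq 0$ the monotone limit $\b\uparrow b$ gives $f_{\wt g}\in\lI$.

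The core identity $V^*\wt g=\pi_\D f_{\wt g}$ I would obtain by testing against $\gH_b$, which is dense in $\gH$. For $\wt h\in\gH_b$ with representative $h$ supported (in the $\D$-sense) on $[a,\b]$, compute
\begin{equation*}
(\pi_\D f_{\wt g},\wt h)_\D=\int_\cI(\D(t)f_{\wt g}(t),h(t))\,dt=\int_\cI\Bigl(\D(t)\int_\bR Y_0(t,s)\,d\Si(s)g(s),h(t)\Bigr)\,dt.
\end{equation*}
Applying Fubini (again legitimate by compact support of both $g$ and $\D h$) and the definition \eqref{4.1} of $\wh h$, the right-hand side becomes $\int_\bR(d\Si(s)g(s),\wh h(s))=(\wt g,\pi_\Si\wh h)_{\LS}=(\wt g,V_b\wt h)_{\LS}=(\wt g,V\wt h)_{\LS}=(V^*\wt g,\wt h)_\D$. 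Since $\gH_b$ is dense in $\gH$ this proves $V^*\wt g=\pi_\D f_{\wt g}$ for all $\wt g\in L^2_{loc}(\Si;\bH)$.

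Finally, for general $\wt g\in\LS$ pick $\wt g_n\in L^2_{loc}(\Si;\bH)$ with $\wt g_n\to\wt g$ in $\LS$ (truncation by indicators of $[-n,n)$, using \eqref{2.11}). Then $\pi_\D f_{\wt g_n}=V^*\wt g_n\to V^*\wt g$ in $\gH$; so the functions $f_{\wt g_n}(\cd)=\int_\bR Y_0(\cd,s)\,d\Si(s)g_n(s)$ form a Cauchy sequence in $\lI$, and their limit (the representative of $V^*\wt g$) is by definition $\int_\bR Y_0(\cd,s)\,d\Si(s)g(s)$ with convergence in the seminorm of $\lI$. I expect the main obstacle to be the careful bookkeeping in the Fubini/monotone-limit step that upgrades $f_{\wt g}\in\AC$ to $f_{\wt g}\in\lI$: one must exploit the partial-isometry property of $V_\Si$ on $\gH_b$ in precisely the right way (pairing $f_{\wt g}$ against approximations of itself supported on $[a,\b]$) to get a bound independent of $\b$, since $\D$ need not be invertible and there is no pointwise control of $Y_0$ at the singular endpoint $b$.
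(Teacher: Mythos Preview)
Your argument is correct, but the paper takes a shorter path that sidesteps precisely the step you flag as the main obstacle. Rather than proving $f_{\wt g}\in\lI$ directly, the paper first fixes any representative $f_*\in\lI$ of $V^*\wt g$ (which exists a priori since $V^*\wt g\in\gH$) and then tests against the simple functions $f(t)=\chi_\d(t)h$ with $h\in\bH$ and $\d\subset\cI$ a compact interval. The same Fubini computation you outline (with Lemma~\ref{lem3.0} supplying continuity of $f_{\wt g}$) gives $\int_\cI(f,\D f_{\wt g})\,dt=(V\pi_\D f,\wt g)_{\LS}=(\pi_\D f,V^*\wt g)_\gH=\int_\cI(f,\D f_*)\,dt$, and since such $f$ separate points this yields $\D(t) f_{\wt g}(t)=\D(t) f_*(t)$ a.e.\ on $\cI$. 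That single identity delivers both $f_{\wt g}\in\lI$ and $\pi_\D f_{\wt g}=\pi_\D f_*=V^*\wt g$ at once, with no uniform-in-$\b$ estimate needed.

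Your route is a genuine alternative: pairing the truncation $h_\b:=\chi_{[a,\b)}f_{\wt g}\in\gH_b$ against $f_{\wt g}$ and rewriting via Fubini as $(\wt g,V\pi_\D h_\b)_{\LS}$ yields, using only the contraction property $\|V\|\leq 1$, the explicit bound $\int_a^\b(\D f_{\wt g},f_{\wt g})\,dt\leq \|\wt g\|_{\LS}\cdot\|h_\b\|_\D$, hence $\|h_\b\|_\D\leq\|\wt g\|_{\LS}$ uniformly in $\b$. This gives a slightly stronger quantitative conclusion than the paper's argument, at the cost of one extra step. Note, though, that the first variant you sketch (expanding \emph{both} copies of $f_{\wt g}$ and looking at the kernel $\int_a^\b Y_0^*(t,s')\D(t)Y_0(t,s)\,dt$) does not by itself produce a bound; it is the second variant---pairing against the truncation and invoking $\|V\|\leq 1$---that actually closes.
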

\begin{proof}
According to Lemma \ref{lem3.0} $f_{\wt g}(\cd)$ is a continuous $\bH$-valued function on $\cI$ and by \eqref{2.12}
\begin{equation}\label{4.2}
f_{\wt g}(t)=\int_\bR Y_0(t,s)\Psi(s)g(s)\, d \s (s), \quad g(\cd)\in\wt g,
\end{equation}
where $\s$  and $\Psi$ are defined in Theorem \ref{th2.8}, (1).

Let $f_*(\cd)\in\lI$ be a function such that $\pi_\D f_*(\cd)=V^*\wt g$. Moreover, let $h\in\bH$, let $\d\subset \cI$ be a compact interval and let $f(t)=\chi_\d(t)h(\in\lI)$. We show that
\begin{equation}\label{4.3}
\int_{\cI}(f(t),\D(t)f_{\wt g}(t))_\bH\,dt= \int_{\cI}(f(t),\D(t) f_*(t))_\bH\,dt.
\end{equation}
In view of \eqref{4.2} one has
\begin{equation}\label{4.4}
\int_{\cI}(f(t),\D(t)f_{\wt g}(t))_\bH\,dt=\int_\cI \left (\int_\bR (\D(t)f(t),Y_0(t,s)\Psi(s)g(s))_\bH\, d\s (s) \right)dt.
\end{equation}
Since $Y_0(\cd,\cd)$ is a continuous function on $\cI\times\bR$, it follows that
\begin{equation*}
\int\limits_{\cI\times\bR}\left|(\D(t)f(t), Y_0(t,s)\Psi(s)g(s))_\bH\right|\, dt d \s(s)<\infty.
\end{equation*}
Therefore by the Fubini theorem one has
\begin{gather*}
\int_\cI \left (\int_\bR (\D(t)f(t),Y_0(t,s)\Psi(s)g(s))_\bH\, d\s (s) \right)dt=\\
\int_{\bR}\left( \int_\cI (\D(t)f(t),Y_0(t,s)\Psi(s)g(s))_\bH\,dt \right)\,d\s(s)=\\
\int_{\bR}\left( \int_\cI (\Psi(s)Y_0^*(t,s)\D(t)f(t),g(s))_\bH\,dt \right)\,d\s(s)=\\
\int_{\bR}\left(\Psi(s) \int_\cI Y_0^*(t,s)\D(t)f(t)\,dt,g(s)\right)_\bH \,d\s(s)=(V\pi_\D f,\wt g)_{\LS}=\\(\pi_\D f,V^*\wt g)_{\gH}=
\int_{\cI}(f(t),\D(t)f_*(t))dt.
\end{gather*}
 Combining these relations with \eqref{4.4} one gets the equality \eqref{4.3}.

It follows from \eqref{4.3} that $\D(t) f_{\wt g}(t)=\D(t)f_*(t)$ (a.e. on $\cI$). Hence $f_{\wt g}(\cd)\in\lI$ and $\pi_\D f_{\wt g}(\cd)=\pi_\D f_*(\cd)=V^*\wt g$.
\end{proof}
\begin{proposition}\label{pr4.3a}
Let $\Si(\cd)$ be a $q$-pseudospectral function of the system \eqref{3.1} and let $L_0$ be a subspace in $\LS$ given by $L_0=V_\Si\gH$. Then the multiplication operator $\L_\Si$ is $L_0$-minimal (in the sense of Definition \ref{def2.0}).
\end{proposition}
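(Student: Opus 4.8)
The plan is to unwind the definition of $L_0$-minimality and reduce it to a density statement that follows from the concrete description of $V_\Si^*$ in Proposition \ref{pr4.3} together with the fact that the $Y_0(\cd,s)$ are solutions of the homogeneous system with a continuity-in-$s$ normalization. By Definition \ref{def2.0} applied to $\wt A=\L_\Si$ and the subspace $L_0\subset\LS$, I must show
\[
\ov{\text{span}}\,\{L_0,\;(\L_\Si-\l)^{-1}L_0:\l\in\CR\}=L_0,
\]
i.e. that $L_0$ is invariant under all resolvents $(\L_\Si-\l)^{-1}$ and that, inside $L_0$, the linear span of these resolvent images is dense. Invariance of $L_0$ under $(\L_\Si-\l)^{-1}$ is the first thing to check: given $\wt g=V_\Si\pi_\D f\in L_0$, one computes $(\L_\Si-\l)^{-1}\wt g$ using the formula \eqref{2.10} for $\L_\Si$ and the explicit form of $V_\Si^*$ from Proposition \ref{pr4.3}, and one wants to recognize the result again as $V_\Si\pi_\D h$ for a suitable $h\in\lI$; here the resolvent identity for the system \eqref{3.1} (the representation behind \eqref{1.8}) should produce such an $h$, since applying $(\L_\Si-\l)^{-1}$ on the spectral side corresponds to solving the inhomogeneous system on the $\LI$ side.

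For the density part, the key point is that $L_0=V_\Si\gH$ is by definition the closure of $V_\Si\gH_b=\{\pi_\Si\wh f:\wt f\in\gH_b\}$, so it suffices to approximate each $\pi_\Si\wh f$, $\wt f\in\gH_b$, by elements of $\ov{\text{span}}\{(\L_\Si-\l)^{-1}L_0\}$. I would argue by duality within $L_0$: suppose $\wt h\in L_0$ is orthogonal in $\LS$ to $(\L_\Si-\l)^{-1}\wt g$ for every $\l\in\CR$ and every $\wt g\in L_0$. Since $\L_\Si=\L_\Si^*$ with spectral measure $E_\Si$ given by \eqref{2.11}, orthogonality to all $(\L_\Si-\l)^{-1}\wt g$ forces, via the Stieltjes inversion applied to $\l\mapsto((\L_\Si-\l)^{-1}\wt g,\wt h)$, that $(E_\Si(B)\wt g,\wt h)=0$ for all Borel $B$ and all $\wt g\in L_0$; taking $\wt g=\wt h$ gives $\wt h=0$. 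This is the standard mechanism by which a self-adjoint operator is cyclic-generated on the closure of a resolvent-invariant subspace containing a generating set, and it is exactly what $L_0$-minimality asserts.

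The main obstacle I anticipate is the bookkeeping in the invariance step: one has to verify carefully that $(\L_\Si-\l)^{-1}$ maps $L_0$ into itself, because $L_0$ is merely the \emph{closure} of $V_\Si\gH_b$ and one must check the resolvent does not leak out of $\ran V_\Si$. The clean way is to note $L_0=\ran V_\Si=\ker V_\Si^{*\perp}$ is automatically closed and that $\L_\Si$ commutes with the orthogonal projection $P_{L_0}$ onto it; to get this commutation one shows $P_{L_0}=V_\Si V_\Si^*$ (using that $V_\Si$ is a partial isometry, so $V_\Si^* V_\Si=P_{\gH\ominus\ker V_\Si}$ and $V_\Si V_\Si^*$ is the projection onto $\ran V_\Si$) and then that $V_\Si V_\Si^*$ commutes with $\L_\Si$, which in turn reduces to the intertwining relation $\L_\Si V_\Si=V_\Si\,(\text{multiplication/differentiation on }\LI)$ coming from the fact that each $Y_0(\cd,s)$ solves $Jy'-B(t)y=s\D(t)y$. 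Once the commutation $[\,\L_\Si,P_{L_0}]=0$ is in hand, invariance of $L_0$ under $(\L_\Si-\l)^{-1}$ is immediate and the density argument above finishes the proof.
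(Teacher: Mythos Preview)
There is a fundamental misreading of Definition \ref{def2.0}. For $\L_\Si$ to be $L_0$-minimal one must have
\[
\ov{\text{span}}\,\bigl\{L_0,\;(\L_\Si-\l)^{-1}L_0:\l\in\CR\bigr\}=\LS,
\]
the \emph{whole} space, not $L_0$. Your entire strategy---resolvent-invariance of $L_0$ and then density inside $L_0$---is aimed at the wrong target. In fact the invariance step you sketch is false in general: if $P_{L_0}$ commuted with $\L_\Si$, then $L_0$ would reduce $\L_\Si$, and together with genuine $L_0$-minimality this would force $L_0=\LS$, i.e.\ $V_\Si$ surjective. But Corollary \ref{cor4.13} shows this happens only when $n_+(\Tmi)=n_-(\Tmi)$ and $\wt T_\Si\in\Selo$, so the commutation $[\L_\Si,P_{L_0}]=0$ cannot hold for an arbitrary $q$-pseudospectral function. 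The ``intertwining'' you invoke would require a self-adjoint operator on the $\LI$ side that $V_\Si$ conjugates to $\L_\Si$; there is no such operator available here.

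The paper omits the proof and refers to \cite[Proposition 6.9]{AlbMalMog13}; the argument there runs roughly as follows. By Stone's formula the closed span in question contains $E_\Si(B)\wt g$ for every Borel $B$ and every $\wt g\in L_0$, hence all $\pi_\Si(\chi_B\wh f)$ with $\wt f\in\gH_b$. If $\wt h\in\LS$ is orthogonal to all of these, then $(\Psi(s)\wh f(s),h(s))=0$ $\s$-a.e.\ for each fixed $\wt f\in\gH_b$. The key point, which uses definiteness of the system, is that for every $s\in\bR$ the set $\{\wh f(s):\wt f\in\gH_b\}$ equals $\bH$: indeed $(\wh f(s),k)=0$ for all such $f$ forces $\D(t)Y_0(t,s)k=0$ a.e., hence $Y_0(\cd,s)k\equiv 0$ by definiteness, hence $k=Y_0(a,s)k=0$. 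Continuity of $s\mapsto\wh f(s)$ then lets one choose a countable family of $f$'s whose transforms span $\bH$ at every $s$, and one concludes $\Psi(s)h(s)=0$ $\s$-a.e., i.e.\ $\wt h=0$.
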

We omit  the proof of Proposition \ref{pr4.3a}, because it is similar to that of \cite[Proposition 6.9]{AlbMalMog13}.

Let $V_\Si$ be the Fourier transform corresponding to the $q$-pseudospectral function $\Si(\cd)$ and let $\gH_0=\gH\ominus\ker V_\Si, \; L_0=V_\Si\gH(=V_\Si\gH_0)$ and $L_0^\perp =\LS\ominus L_0$. Then
\begin{equation}\label{4.5}
\gH=\ker V_\Si\oplus\gH_0, \qquad \LS=L_0\oplus L_0^\perp.
\end{equation}
Assume also that
\begin{equation}\label{4.5a}
\wt\gH_0:=\gH_0\oplus L_0^\perp, \quad \wt\gH:=\overbrace{\ker V_\Si\oplus\gH_0}^{\gH}\oplus L_0^\perp=\gH\oplus L_0^\perp=\ker V_\Si\oplus\wt\gH_0
\end{equation}
and let $\wt V=\wt V_\Si\in [\wt\gH_0,\LS]$ be a unitary operator of the form
\begin{equation}\label{4.6}
\wt V=(V_{0,\Si},\, I_{L_0^\perp} )
:\gH_0\oplus L_0^\perp\to \LS,
\end{equation}
where $V_{0,\Si}=V_\Si\up\gH_0$ and $ I_{L_0^\perp} $   is an embedding operator from $L_0^\perp$ to $\LS$. Since $\gH\subset \wt\gH$, one may  consider $\Tmi$ as a linear relation in $\wt \gH$.
\begin{lemma}\label{lem4.4}
Let $\Si(\cd)$ be a $q$-pseudospectral function of the system \eqref{3.1} and let $\wt V$ be a unitary operator \eqref{4.6}. Moreover, let $(\Tmi)^*_{\wt\gH}\in \C (\wt\gH)$ be a linear relation adjoint to $\Tmi$ in $\wt\gH$ and let $\L=\L_\Si$ be the multiplication operator in $\LS$. Then the equalities
\begin{equation}\label{4.7}
\wt f =\wt V^* \wt g, \qquad \wt T_0 \wt f=\wt V^*\L \wt g, \quad \wt g\in\dom \L
\end{equation}
define a self-adjoint operator $\wt T_0$ in $\wt\gH_0$ such that $\wt T_0\subset (\Tmi)^*_{\wt\gH}$.
\end{lemma}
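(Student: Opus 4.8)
\textbf{Proof proposal for Lemma \ref{lem4.4}.}
The plan is to verify directly that the two equalities in \eqref{4.7} give a well-defined operator, that this operator is self-adjoint, and finally that its graph is contained in $(\Tmi)^*_{\wt\gH}$. First I would note that $\wt V=\wt V_\Si$ is unitary from $\wt\gH_0$ onto $\LS$, so $\wt V^*$ is a unitary from $\LS$ onto $\wt\gH_0$. Hence the formulas $\wt f=\wt V^*\wt g$ and $\wt T_0\wt f=\wt V^*\L\wt g$ simply transport the self-adjoint multiplication operator $\L=\L_\Si$ (with $\L_\Si^*=\L_\Si$, as recalled after \eqref{2.10}) via the unitary $\wt V^*$. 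Therefore $\wt T_0=\wt V^*\L_\Si\wt V$ is automatically a well-defined self-adjoint operator in $\wt\gH_0$ with $\dom\wt T_0=\wt V^*(\dom\L_\Si)$; this part is routine and I would dispatch it in one or two sentences using Definition \ref{def2.1} (unitary equivalence).

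The substantive point is the inclusion $\wt T_0\subset(\Tmi)^*_{\wt\gH}$. Recall that $\Tmi$ is viewed as a relation in $\wt\gH$ via the embedding $\gH\subset\wt\gH$ from \eqref{4.5a}, so $(\Tmi)^*_{\wt\gH}=\{\{\wt h,\wt h'\}\in\wt\gH\oplus\wt\gH:\ (\wt h',\wt u)_{\wt\gH}-(\wt h,\wt u')_{\wt\gH}=0\text{ for all }\{\wt u,\wt u'\}\in\Tmi\}$. Since $\Tmi\subset\gH\oplus\gH$ and vectors of $\dom\wt T_0\subset\wt\gH_0$ decompose according to $\wt\gH=\ker V_\Si\oplus\wt\gH_0$, it suffices to show that for every $\{\wt u,\wt u'\}\in\Tmi$ and every $\wt g\in\dom\L_\Si$,
\begin{equation*}
(\wt V^*\L_\Si\wt g,\wt u)_{\wt\gH}-(\wt V^*\wt g,\wt u')_{\wt\gH}=0.
\end{equation*}
Because $\wt u,\wt u'\in\gH$ and $\wt V^*$ maps into $\wt\gH_0$, the inner products in $\wt\gH$ can be replaced by inner products in $\wt\gH_0$ after projecting $\wt u,\wt u'$ onto $\wt\gH_0$; but $P_{\wt\gH_0}\wt u=P_{\gH_0}\wt u$ and $\wt V P_{\gH_0}\wt u=V_\Si\wt u$, so $(\wt V^*\L_\Si\wt g,\wt u)_{\wt\gH}=(\L_\Si\wt g,V_\Si\wt u)_{\LS}$ and similarly $(\wt V^*\wt g,\wt u')_{\wt\gH}=(\wt g,V_\Si\wt u')_{\LS}$. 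Thus the identity to be proved becomes
\begin{equation*}
(\L_\Si\wt g,V_\Si\wt u)_{\LS}-(\wt g,V_\Si\wt u')_{\LS}=0,\qquad \{\wt u,\wt u'\}\in\Tmi,\ \wt g\in\dom\L_\Si.
\end{equation*}

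To establish this last identity I would reduce to a dense set: by \eqref{2.11} the functions $\wt g$ with compact support are a core for $\L_\Si$ (indeed $L^2_{loc}(\Si;\bH)\subset\dom\L_\Si$ and $E_\Si([-n,n])\wt g\to\wt g$, $\L_\Si E_\Si([-n,n])\wt g\to\L_\Si\wt g$), so it is enough to take $\wt g\in L^2_{loc}(\Si;\bH)$. For such $\wt g$, Proposition \ref{pr4.3} gives $V_\Si^*\wt g=\pi_\D f_{\wt g}$ with $f_{\wt g}(t)=\int_\bR Y_0(t,s)\,d\Si(s)g(s)$, and since $sg(s)$ again has compact support, also $V_\Si^*\L_\Si\wt g=\pi_\D f_{s\wt g}$ where $f_{s\wt g}(t)=\int_\bR Y_0(t,s)\,s\,d\Si(s)g(s)$. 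By Lemma \ref{lem3.0} the function $f_{\wt g}$ is absolutely continuous and $f_{\wt g}'(t)=-J\int_\bR(B(t)+s\D(t))Y_0(t,s)\,d\Si(s)g(s)$, which together with $Jy'-B(t)y=\D(t)f$ identifies $f_{s\wt g}$ as a right-hand side for $f_{\wt g}$ in the sense of $\tma$; more precisely $\{f_{\wt g},f_{s\wt g}\}\in\tma$ up to the multivalued adjustments, hence $\{\pi_\D f_{\wt g},\pi_\D f_{s\wt g}\}=\{V_\Si^*\wt g,V_\Si^*\L_\Si\wt g\}\in\Tma$. Then for $\{\wt u,\wt u'\}\in\Tmi=\Tma^*$ we get $(\L_\Si\wt g,V_\Si\wt u)_{\LS}-(\wt g,V_\Si\wt u')_{\LS}=(V_\Si^*\L_\Si\wt g,\wt u)_\gH-(V_\Si^*\wt g,\wt u')_\gH=(\wt u',V_\Si^*\wt g)_\gH^{\,*}-\dots$; organizing the conjugates correctly this is exactly the pairing $\langle\{V_\Si^*\wt g,V_\Si^*\L_\Si\wt g\},\{\wt u,\wt u'\}\rangle$ that vanishes because $\{V_\Si^*\wt g,V_\Si^*\L_\Si\wt g\}\in\Tma$ and $\{\wt u,\wt u'\}\in\Tma^*=\Tmi$ (using $\Tmi\subset\Tmi^*=\Tma$, hence $\Tma^*=\Tmi$). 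The main obstacle I anticipate is the bookkeeping in the previous sentence: carefully matching the $\LS$-pairing of $\L_\Si\wt g,\wt g$ against $V_\Si\wt u,V_\Si\wt u'$ with the adjoint relation's defining bilinear form on $\gH$, and handling the passage from $\tma$-level statements (with the $\AC\cap\lI$ representatives of Remark \ref{rem3.3}) to the quotient relations $\Tma,\Tmi$, including the density reduction to compactly supported $\wt g$. Once that is in place, $\{\wt f,\wt T_0\wt f\}=\{V_\Si^*\wt g,V_\Si^*\L_\Si\wt g\}\in(\Tmi)^*_{\wt\gH}$ for all $\wt g\in\dom\L_\Si$, which is the desired inclusion $\wt T_0\subset(\Tmi)^*_{\wt\gH}$.
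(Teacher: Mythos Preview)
Your proposal is correct and follows essentially the same route as the paper: reduce to showing $\{V_\Si^*\wt g,V_\Si^*\L\wt g\}\in\Tma$, verify this for compactly supported $\wt g$ via Proposition~\ref{pr4.3} and Lemma~\ref{lem3.0}, then pass to the limit using closedness of $\Tma$. The paper organizes the reduction more directly by first noting $(\Tmi)^*_{\wt\gH}=\Tma\oplus(L_0^\perp)^2$ and $\wt V^*\wt g=V_\Si^*\wt g+P_{L_0^\perp}\wt g$; note also that your final displayed equality $\{\wt f,\wt T_0\wt f\}=\{V_\Si^*\wt g,V_\Si^*\L_\Si\wt g\}$ is a slip (the $L_0^\perp$-component of $\wt V^*\wt g$ is missing), but this is harmless since your pairing computation already establishes $\{\wt V^*\wt g,\wt V^*\L\wt g\}\in(\Tmi)^*_{\wt\gH}$ directly.
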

\begin{proof}
It is easily seen that $(\Tmi)^*_{\wt\gH}=\Tma\oplus (L_0^\perp)^2$. Moreover, in view of \eqref{4.6} and the equality $V_{0,\Si}^*\wt g=V_\Si^* \wt g, \; \wt g\in\LS,$ one has
\begin{equation*}
\wt V^* \wt g =V_\Si^* \wt g +P_{L_0^\perp}\wt g, \quad\wt g\in\LS.
\end{equation*}
Therefore \eqref{4.7} can be written as
\begin{equation*}
\wt f = V_\Si^* \wt g+P_{L_0^\perp}\wt g, \qquad  \wt T_0 \wt f= V_\Si^*\L \wt g+P_{L_0^\perp}\L \wt g,\quad \wt g\in\dom \L.
\end{equation*}
Thus to prove the inclusion $\wt T_0\subset (\Tmi)^*_{\wt\gH}$ it is sufficient to show that $\{V_\Si^* \wt g, V_\Si^*\L \wt g\}\in \Tma$ for all $\wt g\in\dom \L$.

Let $\wt g\in \dom \L, \; g(\cd)\in\wt g$ and let $E(\cd)=E_\Si(\cd)$ be the spectral measure of $\L$. Then by \eqref{2.11} and \eqref{2.10} for each compact interval $\d\subset \bR$ one has $E(\d)\wt g=\pi_\Si (\chi_\d (\cd)g(\cd))$ and $\L E(\d)\wt g=\pi_\Si(s\chi_\d(s)g(s))$. Therefore according to Proposition \ref{pr4.3} $V_\Si^* E(\d)\wt g=\pi_\D y(\cd)$ and $V_\Si^* \L E(\d)\wt g=\pi_\D f(\cd)$, where
\begin{equation*}
y(t)=\int_\bR Y_0(t,s)d\Si(s) \chi_\d(s)g(s), \quad f(t)=\int_\bR s Y_0(t,s)d\Si(s) \chi_\d(s)g(s).
\end{equation*}
It follows from Lemma \ref{lem3.0} that $y\in \AC$ and
\begin {equation*}
y'(t)=-J\int_\bR (B(t)+s\D(t))Y_0(t,s)\, d\Si(s) \chi_\d(s) g(s)\quad ({\rm a.e. on}\;\; \cI ).
\end{equation*}
Therefore
\begin {equation*}
J y'(t)-B(t)y(t)=\D(t)\int_\bR s Y_0(t,s)\, d\Si(s) \chi_\d(s) g(s)=\D(t)f(t) \quad ({\rm a.e. on}\;\; \cI )
\end{equation*}
and, consequently, $\{y,f\}\in\tma$. Hence $\{V_\Si^* E(\d)\wt g, V_\Si^* \L E(\d)\wt g\}(=\{\pi_\D y(\cd),\pi_\D f(\cd)\})\in\Tma$ and passage to the limit when $\d\to \bR$ yields the required inclusion $\{V_\Si^* \wt g, V_\Si^*\L \wt g\}\in \Tma$.
\end{proof}
\begin{proposition}\label{pr4.5}
For each $q$-pseudospectral function $\Si(\cd)$ of the system \eqref{3.1} the corresponding Fourier transform $V_\Si$ satisfies
\begin {equation}\label{4.8}
\mul\Tmi\subset \ker V_\Si
\end{equation}
(for $\mul\Tmi$ see Assertion \ref{ass3.2}, (1)).
\end{proposition}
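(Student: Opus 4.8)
The plan is to exploit Lemma~\ref{lem4.4}, which produces a self-adjoint operator $\wt T_0$ in $\wt\gH_0$ with $\wt T_0\subset(\Tmi)^*_{\wt\gH}$, and to show that the exit space extension $\wt T:=\wt T_0\oplus(\ker V_\Si)^2$ (or rather the relation in $\wt\gH$ whose operator part is $\wt T_0$) is a self-adjoint extension of $\Tmi$ in the sense of Definition~\ref{def2.2}. Indeed, taking $\l\in\CR$ and $\wt f\in\mul\Tmi$, we have $\{0,\wt f\}\in\Tmi\subset\wt T$, so $\wt f\in\mul\wt T$. The multivalued part of $\wt T$ is $\mul\wt T=(L_0^\perp$-part contributes nothing since $\wt T_0$ is an operator$)\oplus\ker V_\Si$, more precisely $\mul\wt T=\ker V_\Si$ by construction of $\wt V$ and $\wt\gH=\ker V_\Si\oplus\wt\gH_0$. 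Hence $\mul\Tmi\subset\mul\wt T=\ker V_\Si$, which is exactly \eqref{4.8}.

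The key steps, in order, are as follows. First I would verify that $\Tmi\subset\wt T$, where $\wt T$ is the closure of $\wt T_0\oplus\wh{\mul}(\ker V_\Si)$ viewed in $\wt\gH$; this uses the inclusion $\wt T_0\subset(\Tmi)^*_{\wt\gH}=\Tma\oplus(L_0^\perp)^2$ together with the fact that $\Tmi$ acts trivially (by a relation-theoretic argument) on the part of $\wt\gH$ coming from $L_0^\perp$ and from $\ker V_\Si$; here one must check that every $\{\wt f,\wt h\}\in\Tmi$ lies in $\wt T_0$ after the identification $\wt V$, i.e.\ that $\wt V$ intertwines $\Tmi\up\gH_0$ with a restriction of $\L_\Si$. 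Second, I would show $\wt T$ is self-adjoint in $\wt\gH$: the operator part $\wt T_0$ is already self-adjoint in $\wt\gH_0$ by Lemma~\ref{lem4.4}, and adjoining $\wh{\mul}(\ker V_\Si)$ preserves self-adjointness by the decomposition $\wt T=\mathrm{gr}\,(\wt T)_0\oplus\wh{\mul}\,\wt T$ recalled in Section~\ref{sub2.3}. Third, I would read off $\mul\wt T$: since $\wt T_0$ is an operator, $\mul\wt T=\ker V_\Si$. Finally, combining $\Tmi\subset\wt T=\wt T^*$ with $\{0,\wt f\}\in\Tmi$ for $\wt f\in\mul\Tmi$ gives $\wt f\in\mul\wt T=\ker V_\Si$.

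The main obstacle I anticipate is the first step: making precise the identification of $\Tmi$ as a sub-relation of $\wt T_0$ under the unitary $\wt V$, and in particular checking that $\Tmi\up\gH_0$ is sent by $V_{0,\Si}$ into the graph of $\L_\Si$ rather than merely into $(\Tmi)^*_{\wt\gH}$. This requires knowing that for $\{\wt f,\wt h\}\in\Tmi$ one has $V_\Si\wt h=\L_\Si V_\Si\wt f$, which should follow from the Parseval-type relation and the explicit description $\wh{f}(s)=\int_\cI Y_0^*(t,s)\D(t)f(t)\,dt$ combined with the differential equation \eqref{3.2} satisfied by $Y_0(\cd,s)$ (integrating $J y' - By = \D f$ against $Y_0^*(\cd,s)$ and using $J Y_0' - B Y_0 = s\D Y_0$ to transfer the $s$ onto $\wh f$); one also needs to handle $\wt f\in\mul\Tmi$ separately, where $\wh h$ is computed with $\wt f=0$ but $\D(t)y(t)=0$ a.e., forcing $\wh f\equiv 0$, i.e.\ $\wt f\in\ker V_\Si$ directly. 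Alternatively, and perhaps more cleanly, one may bypass building $\wt T$ globally and argue directly: for $\wt f\in\mul\Tmi$ pick the associated solution $y$ with $y(a)=0$, $\D(t)y(t)=0$ a.e.\ and $[y,z]_b=0$ from Assertion~\ref{ass3.2}(1); then $\wh f(s)=\int_\cI Y_0^*(t,s)\D(t)f(t)\,dt$ where $\{y,f\}\in\tma$, and an integration by parts using $[y,Y_0(\cd,\ov s)h]$-type boundary terms together with $\D y\equiv 0$ collapses $\wh f$ to $0$, so $\wt f\in\ker V_b\subset\ker V_\Si$. I expect the published proof to follow essentially this direct route via Lemma~\ref{lem4.4}, since that lemma has just been set up for exactly this purpose.
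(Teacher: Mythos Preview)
Your overall strategy---use Lemma~\ref{lem4.4} to relate $\Tmi$ to $\wt T_0$ and then read off the multivalued part---is exactly the paper's. However, you make your life harder than necessary by aiming at the \emph{self-adjoint} relation $\wt T=\wt T_0\oplus\wh{\mul}(\ker V_\Si)$ and trying to prove $\Tmi\subset\wt T$ directly. That inclusion would require $\dom\Tmi\subset\gH_0=\gH\ominus\ker V_\Si$, equivalently $\ker V_\Si\subset(\dom\Tmi)^\perp=\mul\Tma$, which is not obvious (and is not what your proposed intertwining argument $V_\Si\wt h=\L_\Si V_\Si\wt f$ would yield: that identity only gives $\{P_{\gH_0}\wt f,P_{\gH_0}\wt h\}\in\wt T_0$, hence $\{\wt f,\wt h\}\in\wt T_0\oplus(\ker V_\Si)^2$, not $\wt T_0\oplus\wh{\mul}(\ker V_\Si)$).

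The paper bypasses this obstacle with a one-line duality step. From Lemma~\ref{lem4.4} one has $\wt T_0\subset(\Tmi)^*_{\wt\gH}$; taking $\wt\gH$-adjoints reverses the inclusion and gives
\[
\Tmi=\bigl((\Tmi)^*_{\wt\gH}\bigr)^*_{\wt\gH}\subset(\wt T_0)^*_{\wt\gH}=\wt T_0\oplus(\ker V_\Si)^2,
\]
the last equality because $\wt T_0=\wt T_0^*$ in $\wt\gH_0$ and $\wt\gH=\wt\gH_0\oplus\ker V_\Si$. The right-hand side is \emph{larger} than your $\wt T$ (it is not self-adjoint), but that is irrelevant for the conclusion: for $n\in\mul\Tmi$ decompose $\{0,n\}=\{f,\wt T_0 f\}+\{g,g'\}$ with $f\in\dom\wt T_0\subset\wt\gH_0$ and $g,g'\in\ker V_\Si$; orthogonality of $\wt\gH_0$ and $\ker V_\Si$ in the first slot forces $f=g=0$, hence $n=g'\in\ker V_\Si$. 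So the ``main obstacle'' you anticipate simply disappears once you target $(\wt T_0)^*_{\wt\gH}$ instead of the self-adjoint $\wt T$.

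Your alternative direct route via integration by parts also runs into a difficulty you did not flag: an element $\wt f\in\mul\Tmi$ need not lie in $\gH_b$, so the Fourier transform \eqref{4.1} is not a priori defined for it, and the boundary term $[y,Y_0(\cdot,s)h]_b$ is not controlled by Assertion~\ref{ass3.2}(1) (which only gives $[y,z]_b=0$ for $z\in\dom\tma$, whereas $Y_0(\cdot,s)h$ need not lie in $\lI$).
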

\begin{proof}
Let  $\wt T_0=\wt T_0^*$ be the operator in $\wt\gH_0$ defined in Lemma \ref{lem4.4} and let $(\wt T_0)^*_{\wt\gH}$ be the linear relation adjoint to $\wt T_0$ in $\wt\gH$ . Then $(\wt T_0)^*_{\wt\gH}=\wt T_0\oplus (\ker V_\Si)^2$ and the inclusion $\wt T_0\subset (\Tmi)^*_{\wt\gH}$ yields
\begin {equation}\label{4.9}
\Tmi\subset \wt T_0\oplus (\ker V_\Si)^2.
\end{equation}
Let $n\in\mul\Tmi$. Then $\{0,n\}\in\Tmi$ and by \eqref{4.9} $\{0,n\}\in \wt T_0\oplus (\ker V_\Si)^2$. Therefore there exist $f\in\dom \wt T_0$ and $g,g'\in \ker V_\Si$ such that
\begin {equation*}
f+g=0, \qquad \wt T_0 f+g'=n.
\end{equation*}
Since $f\in \wt\gH_0, \; g\in\ker V_\Si$ and $\wt\gH_0\perp \ker V_\Si$ (see \eqref{4.5a}), it follows that $f=g=0$. Therefore $\wt T_0 f=0$ and hence $n=g'\in \ker V_\Si$. This yields the inclusion \eqref{4.8}.
\end{proof}
\begin{remark}\label{rem4.5a}
According to \cite[Lemma 5]{LanTex84} the equality
\begin {equation}\label{4.9a}
\Phi_s f=\int_\cI Y_0^*(t,s)\D(t)f(t)\, dt, \quad \wt f\in\dom\Tmi \cap\gH_b, \;\; s\in\bR
\end{equation}
defines a directing mapping $\Phi$ of $\Tmi$ in the sense of \cite{LanTex84}.  By using this fact and Theorem 1 from \cite{LanTex84} one can prove the inclusion \eqref{4.8} for $q$-pseudospectral functions $\Si(\cd)$ satisfying the additional condition $||V_\Si \wt f||=||\wt f||, \; \wt f\in\dom\Tmi$.
\end{remark}
\begin{definition}\label{def4.6}
A $q$-pseudospectral function $\Si(\cd)$ of the system \eqref{3.1} will be called a pseudospectral function if the corresponding Fourier transform $V_\Si$ satisfies $\ker V_\Si=\mul \Tmi$.
\end{definition}
\begin{definition}\label{def4.7}
A distribution function $\Si(\cd):\bR\to [\bH]$ will be called a spectral function of the system \eqref{3.1} if $\wh f\in \lS$ and the Parseval equality $||\wh f||_{\lS}=||\wt f||_\gH$ holds for all $\wt f\in\gH_b$ (here $\wh f$ is the Fourier transform \eqref{4.1}).
\end{definition}
It follows from Proposition \ref{pr4.5} that a pseudospectral function is a $q$-pseudospectral function $\Si(\cd)$ with the minimally possible $\ker V_\Si$. Moreover, the same proposition yields the following assertion.
\begin{assertion}\label{ass4.8}
A distribution function $\Si(\cd):\bR\to [\bH]$ is a spectral function of the system \eqref{3.1} if and only if it is a pseudospectral function with $\ker V_{\Si}(=\mul\Tmi)=\{0\}$ (that is, with the isometry $V_\Si$).
\end{assertion}
In the following we put $\gH_0:=\gH\ominus \mul\Tmi$, so that
\begin {equation}\label{4.10.1}
\gH=\mul\Tmi\oplus \gH_0.
\end{equation}
Moreover, for a pseudospectral function $\Si(\cd)$ we denote by $V_0=V_{0,\Si}$ the isometry from $\gH_0$ to $\LS$ given by
\begin {equation}\label{4.10.2}
V_{0,\Si}:=V_\Si\up \gH_0.
\end{equation}
Clearly, $V_\Si$ admits the representation
\begin {equation}\label{4.10.2a}
V_\Si=(0, \,V_{0,\Si}):\mul\Tmi\oplus \gH_0\to \LS
\end{equation}
\subsection{Pseudospectral functions and extensions of the minimal relation}
For a Hilbert space $\wt\gH\supset\gH$ we put $\wt\gH_0:=\wt\gH\ominus \mul\Tmi$, so that
\begin {equation}\label{4.10.3}
\wt\gH=\mul\Tmi\oplus \wt\gH_0.
\end{equation}
It is clear that $\gH_0\subset\wt\gH_0$ (for $\gH_0$ see \eqref{4.10.1}).
\begin{definition}\label{def4.9}
A minimal exit space extension $\wt T=\wt T^*\in\C (\wt \gH)$ of $\Tmi$ is referred to the class $\Sel$ if $\mul \wt T=\mul\Tmi$. Moreover, we denote by $\Selo$ the set of all canonical extensions $\wt T=\wt T^*$ of $\Tmi$ satisfying  $\mul \wt T=\mul\Tmi$.
\end{definition}
Clearly, $\Sel\subset \wt {\rm Self}(\Tmi)$ and $\Selo \subset {\rm Self}(\Tmi)$.  Moreover, if $\mul\Tmi=\{0\}$, then $\Sel$ ($\Selo$) is the set of all extensions $\wt T\in \wt {\rm Self}(\Tmi)$ (resp. $\wt T\in  {\rm Self}(\Tmi)$) which are the operators.

For each $\wt T\in \Sel$ we will denote by $\wt T_0$ the operator part of $\wt T$, so that $\wt T_0$ is a self-adjoint operator in $\wt\gH_0$. Let $E_0(\cd)$ be the orthogonal spectral measure of $\wt T_0$ and let $F_0(\cd):\bR\to [\gH_0]$ be a distribution function given by
\begin {equation}\label{4.10.4}
F_0(t)=\wt P_{\gH_0}E_0((-\infty,t))\up\gH_0, \quad t\in\bR,
\end{equation}
where $\wt P_{\gH_0}$ is the orthoprojector in $\wt\gH_0$ onto $\gH_0$. It is clear that a spectral function $F(\cd)$ of $\Tmi$ generated by $\wt T$ is of the form
\begin {equation}\label{4.10.5}
F(t)= {\rm diag}\, (F_0(t), \, 0):\gH_0\oplus\mul \Tmi\to \gH_0\oplus\mul \Tmi.
\end{equation}
Next assume that $\wt T\in \Sel$ and that $F(\cd)$  is a spectral function of $\Tmi$ generated by $\wt T$. Moreover, let $\Si(\cd)$ be a pseudospectral function of the system \eqref{3.1}.
\begin{definition}\label{def4.10}
We write $\wt T=\wt T_\Si$ if
\begin {equation} \label{4.10.6}
((F(\b)-F(\a))\wt f,\wt f)_\gH=\int_{[\a,\b)} (d\Si(s)\wh f(s),\wh f(s)), \;\;\;\;\wt f\in \gH_b, \;\;\;\; -\infty<\a<\b<\infty.
\end{equation}
\end{definition}
\begin{proposition}\label{pr4.12}
For each pseudospectral function $\Si(\cd)$ of the system \eqref{3.1} there exists a unique (up to the equivalence) exit space extension $\wt T\in\Sel$ such that $\wt T=\wt T_\Si$. Moreover,   there exists a unitary operator $\wt V \in [\wt \gH_0,\LS]$ such that $\wt V\up \gH_0=V_{0,\Si}$ and the operators $\wt T_0$ and $\L_\Si$  are unitarily equivalent by means of $\wt V$.
\end{proposition}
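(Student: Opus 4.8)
The plan is to construct the exit space extension $\wt T$ directly from the Fourier transform $V_\Si$ using the machinery already assembled in Lemma \ref{lem4.4} and Proposition \ref{pr4.5}. First I would recall the decompositions \eqref{4.5}, \eqref{4.5a} and the unitary operator $\wt V=\wt V_\Si\in [\wt\gH_0,\LS]$ from \eqref{4.6}, where now $\wt\gH_0=\gH_0\oplus L_0^\perp$ and, crucially, $\gH_0=\gH\ominus\mul\Tmi$ because $\Si(\cd)$ is a pseudospectral function (so $\ker V_\Si=\mul\Tmi$). This identification reconciles the two meanings of $\wt\gH_0$: the one in \eqref{4.5a} built from $\ker V_\Si$ and the one in \eqref{4.10.3} built from $\mul\Tmi$. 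Then define $\wt T$ to be the self-adjoint operator $\wt T_0$ of Lemma \ref{lem4.4} viewed inside $\wt\gH=\mul\Tmi\oplus\wt\gH_0$, i.e. set $\wt T:=\wt T_0\oplus\wh{\mul}\,\Tmi$ where $\wh{\mul}\,\Tmi=\{0\}\oplus\mul\Tmi$; equivalently $\wt T$ is the relation with operator part $\wt T_0$ and multivalued part $\mul\Tmi$. By construction $\wt T=\wt T^*$, $\mul\wt T=\mul\Tmi$, and from Lemma \ref{lem4.4} and \eqref{4.9} one gets $\Tmi\subset\wt T$, so $\wt T$ is an exit space self-adjoint extension of $\Tmi$ in the class $\Sel$ once minimality is checked. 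The unitary operator required in the second assertion is exactly $\wt V=\wt V_\Si$: it satisfies $\wt V\up\gH_0=V_{0,\Si}$ by \eqref{4.6} and \eqref{4.10.2}, and $\wt T_0$ and $\L_\Si$ are unitarily equivalent via $\wt V$ directly from the defining equalities \eqref{4.7}.

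Next I would verify the minimality of $\wt T$, i.e. that $\wt T$ is $\gH$-minimal. This is where Proposition \ref{pr4.3a} enters: it says $\L_\Si$ is $L_0$-minimal, where $L_0=V_\Si\gH$. Transporting this through the unitary $\wt V$ (which carries $L_0$ onto $\gH_0$, hence $\L_\Si$-minimality relative to $L_0$ becomes $\wt T_0$-minimality of $\wt\gH_0$ relative to $\gH_0$), and then adjoining the multivalued part $\mul\Tmi\subset\gH$, one obtains $\ov{\mathrm{span}}\{\gH,(\wt T-\l)^{-1}\gH:\l\in\CR\}=\wt\gH$. The bookkeeping with resolvents of relations with nontrivial multivalued part is routine: $(\wt T-\l)^{-1}=(\wt T_0-\l)^{-1}\wt P_{\wt\gH_0}$, so it reduces to the operator statement for $\wt T_0$.

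For the equality $\wt T=\wt T_\Si$ in the sense of Definition \ref{def4.10}, I would compute both sides of \eqref{4.10.6}. The spectral function $F(\cd)$ of $\Tmi$ generated by $\wt T$ is $F(t)=\mathrm{diag}(F_0(t),0)$ with $F_0(t)=\wt P_{\gH_0}E_0((-\infty,t))\up\gH_0$ as in \eqref{4.10.4}--\eqref{4.10.5}, where $E_0$ is the spectral measure of $\wt T_0$. Using the unitary equivalence $\wt T_0=\wt V^*\L_\Si\wt V$ we get $E_0((-\infty,t))=\wt V^*E_\Si((-\infty,t))\wt V$, and then \eqref{2.11} expresses $E_\Si((-\infty,t))$ as multiplication by the indicator $\chi_{(-\infty,t)}$. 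For $\wt f\in\gH_b$ with $f(\cd)\in\wt f$ we have $\wt V\pi_\D f=V_\Si\wt f=\pi_\Si\wh f$, so
\[
((F(\b)-F(\a))\wt f,\wt f)_\gH=(E_\Si([\a,\b))\pi_\Si\wh f,\pi_\Si\wh f)_{\LS}=\int_{[\a,\b)}(d\Si(s)\wh f(s),\wh f(s)),
\]
which is precisely \eqref{4.10.6}. (One must also note that $\mul\Tmi\subset\ker V_\Si$ forces the $\mul\Tmi$-component of $\wt f$ to contribute nothing, consistent with the block form of $F$.)

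Finally, uniqueness up to equivalence: if $\wt T'\in\Sel$ also satisfies $\wt T'=\wt T'_\Si$, then the two generated spectral functions $F,F'$ of $\Tmi$ agree on all half-intervals tested against $\gH_b$, hence by density of $\gH_b$ in $\gH$ they agree as $[\gH]$-valued distribution functions, so the generalized resolvents $R(\l)=\int_\bR\frac{dF(t)}{t-\l}$ coincide; by Proposition \ref{pr2.6} (together with Definition \ref{def4.9}, since both lie in $\Sel$) the extensions $\wt T$ and $\wt T'$ are equivalent. I expect the main obstacle to be not any single hard estimate but the careful identification of the two a priori different spaces called $\wt\gH_0$ and the correct handling of resolvents and spectral measures for relations with nonzero multivalued part; once the pseudospectrality hypothesis $\ker V_\Si=\mul\Tmi$ is used to align \eqref{4.5a} with \eqref{4.10.3}, the rest follows by assembling Lemmas \ref{lem3.0}, \ref{lem4.4} and Propositions \ref{pr4.3}, \ref{pr4.3a}, \ref{pr4.5}.
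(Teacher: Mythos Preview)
Your proposal is correct and follows essentially the same route as the paper's proof: construct $\wt\gH_0=\gH_0\oplus L_0^\perp$ (using $\ker V_\Si=\mul\Tmi$ to align \eqref{4.5a} with \eqref{4.10.3}), take $\wt T_0$ from Lemma \ref{lem4.4}, set $\wt T=\wt T_0\oplus(\{0\}\oplus\mul\Tmi)$, check minimality via Proposition \ref{pr4.3a}, verify \eqref{4.10.6} through the unitary equivalence $E_0=\wt V^*E_\Si\wt V$, and deduce uniqueness from the coincidence of spectral functions. The only cosmetic point is that your citation of \eqref{4.9} for $\Tmi\subset\wt T$ needs the extra remark $\dom\Tmi\perp\mul\Tmi$ (so the $\ker V_\Si$-component of the first entry vanishes); the paper instead argues $\wt T\subset(\Tmi)^*_{\wt\gH}$ directly from Lemma \ref{lem4.4} and takes adjoints.
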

\begin{proof}
For a given pseudospectral function $\Si(\cd)$ we put $L_0=V_\Si \gH_0$ and $L_0^\perp =\LS\ominus L_0$, so that $\LS=L_0\oplus L_0^\perp$. Assume also that
\begin{equation}\label{4.10.11}
\wt\gH_0:=\gH_0\oplus L_0^\perp, \quad \wt\gH:=\mul\Tmi\oplus\gH_0\oplus L_0^\perp=\mul\Tmi\oplus\wt\gH_0
\end{equation}
and let $\wt V\in [\wt\gH_0,\LS]$ be a unitary operator  \eqref{4.6}. Since $\ker V_\Si=\mul\Tmi$, it follows from Lemma \ref{lem4.4} that the equalities \eqref{4.7} define a self-adjoint operator $\wt T_0$ in $\wt\gH_0$. Moreover, in view of \eqref{4.7} the operators  $\wt T_0$ and $\L=\L_\Si$ are unitarily equivalent by means of $\wt V$. Hence the spectral measure $E_0(\cd)$ of $\wt T_0$ satisfies
\begin{equation}\label{4.10.13}
E_0([\a,\b))=\wt V^* E_\Si ([\a,\b))\wt V, \quad -\infty<\a<\b<\infty. \end{equation}
Observe also that $\wt V\gH_0=V_\Si \gH_0=L_0$ and by Proposition \ref{pr4.3a} the operator $\L_\Si$ is $L_0$-minimal. Therefore the operator $\wt T_0$ is $\gH_0$-minimal.

It follows from the second equality in \eqref{4.10.11} that
\begin{equation*}
\wt T:=(\{0\}\oplus \mul\Tmi)\oplus \wt T_0
\end{equation*}
is a self-adjoint linear relation in $\wt \gH$ with the operator part $\wt T_0$ and $\mul\wt T=\mul\Tmi$. Moreover, $\{0\}\oplus \mul\Tmi\subset \Tmi\subset (\Tmi)^*_{\wt\gH}$ and by Lemma \ref{lem4.4} $\wt T_0\subset (\Tmi)^*_{\wt\gH}$. Hence $\wt T\subset (\Tmi)^*_{\wt\gH}$ and, consequently, $\Tmi\subset \wt T$. Observe also that the relation $\wt T$ is $\gH$-minimal, since the operator $\wt T_0$ is $\gH_0$-minimal. Hence $\wt T\in \Sel$.

Next we show that $\wt T=\wt T_\Si$. Let $F(\cd)$ be a spectral function of $\Tmi$ generated by $\wt T$ and let $F_0(\cd)$ be given by \eqref{4.10.4}. By using \eqref{4.10.13} and \eqref{4.6} one can easily show that
\begin{equation*}
F_0(\b)-F_0(\a)=\wt P_{\gH_0}E_0([\a,\b))\up\gH_0=V_{0,\Si}^* E_\Si([\a,\b))V_{0,\Si}, \quad -\infty<\a<\b<\infty.
\end{equation*}
Therefore by \eqref{4.10.5} and \eqref{4.10.2a} one has
\begin{equation*}
F(\b)-F(\a)=V_{\Si}^*E_\Si([\a,\b))V_{\Si}, \quad -\infty<\a<\b<\infty,
\end{equation*}
which is equivalent to \eqref{4.10.6}. Hence  $\wt T=\wt T_\Si$.

Finally, uniqueness of $\wt T=\wt T_\Si$ directly follows from \eqref{4.10.6} and $\gH$-minimality of $\wt T$.
\end{proof}
The following corollary is immediate from Proposition \ref{pr4.12}.
\begin{corollary}\label{cor4.13}
Let $\Si(\cd)$ be a pseudospectral function of the system \eqref{3.1}. Then $V_{0,\Si}$ is a unitary operator from $\gH_0$ onto $\LS$ if and only if $n_+(\Tmi)=n_-(\Tmi)$ and $\wt T_\Si\in \Selo$. If these conditions are satisfied, then the operators $\wt T_0$ and $\L_\Si$ are unitarily equivalent by means of $V_{0,\Si}$.
\end{corollary}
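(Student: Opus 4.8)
The plan is to read the statement off from the explicit model of $\wt T_\Si$ built in the proof of Proposition \ref{pr4.12}. Recall from there that one sets $L_0 = V_\Si\gH_0$, $L_0^\perp = \LS\ominus L_0$, $\wt\gH_0 = \gH_0\oplus L_0^\perp$ and $\wt\gH = \mul\Tmi\oplus\wt\gH_0 = \gH\oplus L_0^\perp$, and that the unitary operator $\wt V = (V_{0,\Si},\, I_{L_0^\perp}):\gH_0\oplus L_0^\perp\to\LS$ of \eqref{4.6} realizes the unitary equivalence of the operator part $\wt T_0$ of $\wt T_\Si$ with the multiplication operator $\L_\Si$. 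First I would record the trivial fact that $V_{0,\Si} = V_\Si\up\gH_0$ is always an isometry (it is the restriction of the partial isometry $V_\Si$ to $\gH_0 = \gH\ominus\ker V_\Si$), so that $V_{0,\Si}$ is unitary from $\gH_0$ onto $\LS$ if and only if $\ran V_{0,\Si} = L_0 = \LS$, i.e. if and only if $L_0^\perp = \{0\}$.

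Next I would translate $L_0^\perp = \{0\}$ into the two conditions of the statement. Since $\wt\gH\ominus\gH = L_0^\perp$, the equality $L_0^\perp = \{0\}$ means $\wt\gH = \gH$, i.e. $\wt T_\Si$ is a canonical self-adjoint extension of $\Tmi$; as $\wt T_\Si\in\Sel$ always holds (so that $\mul\wt T_\Si = \mul\Tmi$), this is exactly $\wt T_\Si\in\Selo$. Conversely, if the equivalence class $\wt T_\Si$ contains a canonical extension, then by Definition \ref{def2.4} the exit space $\wt\gH\ominus\gH = L_0^\perp$ of the model is unitarily equivalent to $\{0\}$, hence $L_0^\perp = \{0\}$. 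Finally $\wt T_\Si\in\Selo\subset{\rm Self}(\Tmi)$, and ${\rm Self}(\Tmi)\neq\emptyset$ forces $n_+(\Tmi) = n_-(\Tmi)$; so the pair of conditions ``$n_+(\Tmi) = n_-(\Tmi)$ and $\wt T_\Si\in\Selo$'' is jointly equivalent to $L_0^\perp = \{0\}$, which by the previous paragraph is equivalent to $V_{0,\Si}$ being unitary onto $\LS$. For the last assertion it then suffices to observe that when $L_0^\perp = \{0\}$ the formula \eqref{4.6} degenerates to $\wt V = V_{0,\Si}$ and $\wt\gH_0 = \gH_0$, so the unitary equivalence of $\wt T_0$ and $\L_\Si$ ``by means of $\wt V$'' supplied by Proposition \ref{pr4.12} becomes precisely the asserted one ``by means of $V_{0,\Si}$.''

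I expect the only point requiring care --- hence the main obstacle --- to be the identification of ``$\wt T_\Si\in\Selo$'' with ``$L_0^\perp = \{0\}$'': since $\wt T_\Si$ is only determined up to the equivalence of Definition \ref{def2.4}, one must propagate the property of being canonical (living in $\gH$) back to triviality of the particular exit space $L_0^\perp$ occurring in the model of Proposition \ref{pr4.12}. This is immediate from the invariance of $\wt\gH\ominus\gH$, up to unitary equivalence, under that equivalence of extensions, but it is worth stating explicitly; everything else is routine bookkeeping with \eqref{4.10.11} and \eqref{4.6}.
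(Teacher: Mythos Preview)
Your proposal is correct and is exactly what the paper has in mind: the paper gives no explicit proof of this corollary beyond the sentence ``immediate from Proposition \ref{pr4.12}'', and you have simply unpacked that remark. The only point you single out for care --- passing from ``$\wt T_\Si\in\Selo$ up to equivalence'' to ``$L_0^\perp=\{0\}$ in the model'' --- is indeed the one nontrivial step, and your justification via Definition \ref{def2.4} (equivalence preserves the exit space $\wt\gH\ominus\gH$ up to a unitary, hence preserves its triviality) is the intended one.
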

\begin{remark}\label{rem4.13a}
Applying \cite[Theorem 1]{LanTex84} to the directing mapping \eqref{4.9a} one can give another proof of Proposition \ref{pr4.12}.
\end{remark}

The following theorem is well known (see e.g. \cite{Bru78,DLS93,Sht57}).
\begin{theorem}\label{th4.14}
Let $Y_0(\cd,\l)$ be the $[\bH]$-valued operator solution of Eq. \eqref{3.2}
satisfying $Y_0(a,\l)=I_{\bH}$. Then for each generalized resolvent $R(\l)$ of
$\Tmi$ there exists a unique operator function $\Om (\cd):\CR\to [\bH]$ such
that for each  $ \wt f\in\LI$ and $\l\in\CR$
\begin {equation}\label{4.15}
R(\l)\wt f=\pi_\D\left(\int_\cI Y_0(\cd,\l)(\Om(\l)+\tfrac 1 2 \, {\rm
sgn}(t-x)J)Y_0^*(t,\ov\l)\D(t) f(t)\,dt \right), \quad f\in\wt f.
\end{equation}
Moreover, $\Om(\cd)$ is a Nevanlinna operator function.
\end{theorem}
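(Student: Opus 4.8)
The plan is to derive the representation \eqref{4.15} from the abstract Krein--Naimark formula \eqref{2.22} applied to a decomposing boundary triplet, and then to read off the characteristic matrix $\Om(\cd)$ from the resulting kernel. First I would fix a decomposing boundary triplet $\Pi_+=\bta$ for $\Tma$ as in Proposition \ref{pr3.5} (with $\cH_0,\cH_1$ as in \eqref{3.14} and $\G_0,\G_1$ given by \eqref{3.15}); by Theorem \ref{th2.12.1} each generalized resolvent $R(\l)$ of $\Tmi$ equals $R_\tau(\l)$ for a unique boundary parameter $\tau\in\wt R_+(\cH_0,\cH_1)$. The key point is that the $L_\D^2$-solution $y=y(g,\l)$ of the boundary value problem \eqref{2.19}--\eqref{2.21} can be written explicitly using the operator solution $Y_0(\cd,\l)$: a particular solution of the inhomogeneous system with the prescribed behaviour at the regular endpoint $a$ is obtained by variation of parameters, namely $\int_\cI Y_0(\cd,\l)\tfrac12\,{\rm sgn}(t-x)J Y_0^*(t,\ov\l)\D(t)f(t)\,dt$, and the boundary conditions at $b$ are then satisfied by adding an appropriate solution of the homogeneous equation, which lies in the column space of $Y_0(\cd,\l)$ and hence contributes a term of the form $Y_0(\cd,\l)\,\Om(\l)\,Y_0^*(\cdot,\ov\l)\D(\cdot)f(\cdot)$ for some $\Om(\l)\in[\bH]$. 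Comparing with \eqref{2.22} one identifies $\Om(\l)$ in terms of $\g_\pm(\l)$, $M_+(\l)$ and $\tau_+(\l)$ via the boundary values of the solutions $v_0(\cd,\l),u(\cd,\l)$ from Proposition \ref{pr3.6}; this simultaneously gives existence.

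For uniqueness of $\Om(\cd)$, I would use definiteness of the system. Suppose $\Om_1(\cd)$ and $\Om_2(\cd)$ both satisfy \eqref{4.15} for the same $R(\l)$; then for every $f\in\wt f$ and a.e.\ $x$,
\begin{equation*}
\pi_\D\left(\int_\cI Y_0(\cd,\l)(\Om_1(\l)-\Om_2(\l))Y_0^*(t,\ov\l)\D(t)f(t)\,dt\right)=0,
\end{equation*}
i.e.\ $\D(x)Y_0(x,\l)\big(\Om_1(\l)-\Om_2(\l)\big)\wh f_\l=0$ a.e.\ on $\cI$, where $\wh f_\l=\int_\cI Y_0^*(t,\ov\l)\D(t)f(t)\,dt$. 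Since $x\mapsto Y_0(x,\l)h$ solves \eqref{3.2} for every $h\in\bH$, the definiteness hypothesis forces $Y_0(\cdot,\l)(\Om_1(\l)-\Om_2(\l))\wh f_\l\equiv 0$ on $\cI$, and evaluating at $x=a$ with $Y_0(a,\l)=I_\bH$ gives $(\Om_1(\l)-\Om_2(\l))\wh f_\l=0$. As $\wt f$ ranges over $\gH_b$ the vectors $\wh f_\l$ span $\bH$ (again by definiteness, since a vector orthogonal to all of them yields a nontrivial solution annihilated by $\D$), so $\Om_1(\l)=\Om_2(\l)$. The Nevanlinna property of $\Om(\cd)$ then follows from the Nevanlinna property of $\tau_+(\cd)$ and $M_+(\cd)$ through the explicit formula, or alternatively from the fact that $R(\l)$ is a generalized resolvent (so $\im\l\cdot\im\,((R(\l)-R(\ov\l))f,f)\ge 0$) combined with the representation \eqref{4.15}; the symmetry relation $\Om^*(\l)=\Om(\ov\l)$ comes from $R(\l)^*=R(\ov\l)$ and the identity $J^*=-J$.

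The main obstacle I anticipate is the bookkeeping in passing between the abstract formula \eqref{2.22} and the concrete integral kernel in \eqref{4.15}: one must verify that the variation-of-parameters particular solution indeed produces the $\tfrac12\,{\rm sgn}(t-x)J$ kernel (the jump of $Y_0(x,\l)\tfrac12\,{\rm sgn}(t-x)JY_0^*(t,\ov\l)$ across $t=x$ must reproduce the coefficient $J^{-1}=-J$ in front of $y'$), and that the correction term enforcing the boundary condition at the singular endpoint $b$ is exactly of rank-type $Y_0(x,\l)\Om(\l)Y_0^*(t,\ov\l)\D(t)$ with $\Om(\l)$ \emph{independent of} $x$ and $t$. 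This hinges on the decomposition of $\g_+(\l)$ through the solutions $v_0(\cd,\l),u(\cd,\l)$ and on the fact that $R(\l)f\in\dom\Tma$, so that its defect component is a genuine $L_\D^2$-solution of \eqref{3.2}. Since this computation (in the case of maximal deficiency indices) is classical and attributed to \cite{Bru78,DLS93,Sht57}, I would cite it for the structural form and only indicate the adaptation to the present general (not necessarily maximal deficiency, not necessarily Hamiltonian) setting, where the decomposing boundary triplet of Section \ref{sub3.3} supplies the needed boundary mappings.
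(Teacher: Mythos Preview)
The paper does not prove Theorem \ref{th4.14}; it is introduced as ``well known'' with references to \cite{Bru78,DLS93,Sht57} and no argument is supplied. So there is no in-paper proof against which to compare your proposal.

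Your sketch is essentially the classical route taken in those references: variation of parameters produces the $\tfrac12\,{\rm sgn}(t-x)J$ kernel (using $Y_0^*(t,\ov\l)JY_0(t,\l)\equiv J$ and $J^{-1}=-J$), the $L_\D^2$-boundary adjustment contributes a correction of the form $Y_0(x,\l)\Om(\l)Y_0^*(t,\ov\l)$, and definiteness gives uniqueness of $\Om(\l)$ exactly as you argue (your span argument for the $\wh f_\l$ is correct). The one unnecessary detour is your invocation of the decomposing boundary triplet $\Pi_+$ and the abstract Krein--Naimark formula \eqref{2.22}: that machinery presupposes assumption (A1), i.e.\ $n_-(\Tmi)\le n_+(\Tmi)$, whereas Theorem \ref{th4.14} is stated (and holds) without any restriction on the deficiency indices. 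The classical arguments in \cite{Bru78,DLS93,Sht57} work directly from the fact that any generalized resolvent satisfies $\{R(\l)\wt f,\l R(\l)\wt f+\wt f\}\in\Tma$, write the resulting $L_\D^2$-solution of \eqref{5.1} as particular-plus-homogeneous, and never pass through a boundary triplet; thus they avoid (A1). To make your outline cover the theorem as stated you would either add the symmetric case $n_+(\Tmi)\le n_-(\Tmi)$ or, more simply, drop $\Pi_+$ and argue directly as in the cited papers.
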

\begin{definition}\label{def4.15}$\,$\cite{Bru78,Sht57}
The operator function $\Om(\cd) $ is called the characteristic matrix of the
symmetric system \eqref{3.1} corresponding  to the generalized resolvent $R(\l)$.
\end{definition}
Since  $\Om(\cd)$ is a Nevanlinna function, it follows that the equality (the Stieltjes formula)
\begin {equation}\label{4.16}
\Si_\Om(s)=\lim\limits_{\d\to+0}\lim\limits_{\varepsilon\to +0} \frac 1 \pi
\int_{-\d}^{s-\d}\im \,\Om(\s+i\varepsilon)\, d\s.
\end{equation}
defines a distribution $[\bH]$-valued function $\Si_\Om(\cd)$. This function is called a spectral function of $\Om(\cd)$.
\begin{theorem}\label{th4.16}
Let $\wt T\in\Sel$, let $R(\cd)$ be the generalized resolvent of $\Tmi$ generated by $\wt T$, let $\Om(\cd)$ be the characteristic matrix corresponding to $R(\cd)$ and let $\Si_\Om(\cd)$ be the spectral function of $\Om(\cd)$. Then $\Si(\cd)=\Si_\Om(\cd)$ is a pseudospectral function of the system \eqref{3.1} and $\wt T=\wt T_\Si$ (in the sense of Definition \ref{def4.10}). If in addition system \eqref{3.1} is absolutely definite, then $\Si(\cd)=\Si_\Om(\cd)$ is a unique pseudospectral function of this system  satisfying $\wt T=\wt T_\Si$.
\end{theorem}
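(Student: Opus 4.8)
The plan is to connect the integral representation \eqref{4.15} of the generalized resolvent $R(\cd)$ with the abstract resolvent formula \eqref{2.22} via the decomposing boundary triplet of Proposition \ref{pr3.5}, and then to identify the spectral function $\Si_\Om(\cd)$ of the characteristic matrix with the multiplication operator $\L_\Si$ through a Fourier transform. Concretely, given $\wt T\in\Sel$, let $R(\cd)$ be the generalized resolvent it generates and $\Om(\cd)$ the associated characteristic matrix from Theorem \ref{th4.14}. First I would set $\Si:=\Si_\Om$ defined by \eqref{4.16} and, using the standard Stieltjes inversion together with the Nevanlinna property of $\Om(\cd)$, produce from \eqref{4.15} a Parseval-type identity
\begin{equation*}
((F(\b)-F(\a))\wt f,\wt f)_\gH=\int_{[\a,\b)}(d\Si(s)\wh f(s),\wh f(s)),\qquad \wt f\in\gH_b,
\end{equation*}
where $F(\cd)$ is the spectral function of $\Tmi$ generated by $\wt T$ and $\wh f$ is the Fourier transform \eqref{4.1}. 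Passing to $\a\to-\infty,\ \b\to+\infty$ and using \eqref{2.7} gives $||\wh f||_{\lS}^2=(F(\infty)\wt f,\wt f)\le \|\wt f\|^2$, so $\wh f\in\lS$ and $V_b\wt f:=\pi_\Si\wh f$ extends to a contraction; the Parseval identity localized in $s$ shows this extension is in fact a partial isometry, i.e.\ $\Si$ is a $q$-pseudospectral function. Then $\wt T=\wt T_\Si$ in the sense of Definition \ref{def4.10} is immediate from the displayed identity.

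The next step is to upgrade $q$-pseudospectral to pseudospectral, i.e.\ to show $\ker V_\Si=\mul\Tmi$. The inclusion $\mul\Tmi\subset\ker V_\Si$ is Proposition \ref{pr4.5}. For the reverse inclusion I would exploit that $\wt T\in\Sel$, so $\mul\wt T=\mul\Tmi$ and the operator part $\wt T_0$ acts in $\wt\gH_0=\wt\gH\ominus\mul\Tmi$; its spectral measure $E_0(\cd)$ satisfies \eqref{4.10.4}. From $\wt T=\wt T_\Si$ and the structure \eqref{4.10.5} of $F(\cd)$ one gets, for every $\wt f\in\gH_b$,
\begin{equation*}
(E_0([\a,\b))\wt P_{\gH_0}\wt f,\wt P_{\gH_0}\wt f)=(\pi_\Si(\chi_{[\a,\b)}\wh f),\pi_\Si(\chi_{[\a,\b)}\wh f))_{\lS},
\end{equation*}
which lets me build a densely defined isometry $W$ from $\ov{\rm span}\{E_0(\d)\gH_0\}$ onto $L_{loc}^2(\Si;\bH)$ intertwining $\wt T_0$ with $\L_\Si$. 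By $\gH$-minimality of $\wt T$ (Definition \ref{def2.0}, \ref{def2.2}) the domain of $W$ is all of $\wt\gH_0$, so $W$ is unitary onto $\LS$; then $\ker V_\Si=\wt\gH\ominus\wt\gH_0=\mul\Tmi$ by \eqref{4.10.3}, giving Definition \ref{def4.6}. This also delivers the unitary $\wt V$ of Proposition \ref{pr4.12} as a byproduct, and uniqueness of the extension realizing a given $q$-pseudospectral function follows as there.

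Finally, for the absolutely definite case I must show $\Si_\Om(\cd)$ is the \emph{only} pseudospectral function with $\wt T=\wt T_\Si$. Suppose $\Si'$ is another one. By Proposition \ref{pr4.12} applied to $\Si'$ there is $\wt T'\in\Sel$ with $\wt T'=\wt T_{\Si'}$; comparing \eqref{4.10.6} for $\Si$ and $\Si'$ forces the spectral functions of $\wt T$ and $\wt T'$ to coincide on $\gH_b$, hence on $\gH$, hence $\wt T'=\wt T$ up to equivalence (uniqueness in Proposition \ref{pr4.12}). It remains to recover $\Si$ itself from the data $(\wt T,\gH)$: the Fourier transforms $V_\Si,V_{\Si'}$ both intertwine $\wt T_0$ with $\L_\Si,\L_{\Si'}$ respectively and agree on $\gH_0$ up to the unitary of Proposition \ref{pr4.12}, so $\L_\Si$ and $\L_{\Si'}$ are unitarily equivalent by a unitary fixing $\gH_0$ and intertwining the two spectral measures \eqref{2.11}; since both are $\gH_0$-minimal, this unitary is the identity on the generated space, forcing $L_{loc}^2(\Si;\bH)=L_{loc}^2(\Si';\bH)$ with equal norms, i.e.\ $\Si=\Si'$ as distribution functions. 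The main obstacle I anticipate is precisely this last uniqueness argument: $q$-pseudospectral functions with smaller kernels can realize the same $\wt T$ when $V_\Si$ is not onto, and it is \emph{absolute definiteness} that rules this out — the hard part is to show that $\mu_1(X_\D)>0$ forces the Fourier image $V_\Si\gH$ to be $\L_\Si$-cyclic enough (equivalently, that the only $\wt g\in\LS$ with $\pi_\D\bigl(\int_\bR Y_0(\cd,s)d\Si(s)g(s)\bigr)=0$ is $\wt g=0$), so that $\wt V$ is genuinely unitary and $\Si$ is pinned down; this is where the definiteness hypothesis enters in an essential, rather than cosmetic, way.
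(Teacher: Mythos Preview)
Your approach to the first statement is sound but more elaborate than necessary. Once you have the Parseval-type identity \eqref{4.10.6} from Stieltjes--Liv\u{s}ic inversion, letting $\a\to-\infty$, $\b\to+\infty$ and invoking \eqref{2.7} gives directly $||V\wt f||^2=||P_{\wt\gH_0}\wt f||^2$ for all $\wt f\in\gH$, where $\wt\gH_0=\wt\gH\ominus\mul\wt T=\wt\gH\ominus\mul\Tmi$ (this is exactly where the hypothesis $\wt T\in\Sel$ enters). Since $\gH_0:=\gH\ominus\mul\Tmi\subset\wt\gH_0$, it follows at once that $V$ vanishes on $\mul\Tmi$ and is isometric on $\gH_0$, so $\ker V=\mul\Tmi$ --- no appeal to Proposition~\ref{pr4.5}, no auxiliary intertwiner $W$, no minimality argument. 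That is how the paper proceeds.

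The genuine gap is in your uniqueness argument. Knowing that $\L_\Si$ and $\L_{\Si'}$ are unitarily equivalent by a unitary extending $V_{0,\Si}$ on $\gH_0$ does \emph{not} force $\Si=\Si'$: distinct distribution functions can yield unitarily equivalent multiplication operators (equivalence is determined by spectral type and multiplicity, not by the distribution function itself), and you give no mechanism by which $\mu_1(X_\D)>0$ would supply the cyclicity you need. The paper takes an entirely different, concrete route. From \eqref{4.10.6} written for both $\Si$ and $\Si_\Om$ with the \emph{same} left-hand side $F(\b)-F(\a)$, one uses Proposition~\ref{pr4.3} and Fubini to express each as an integral operator on $\gH_b$ with kernel $K_{\d,\Si}(t,u)=\int_\d Y_0(t,s)\,d\Si(s)\,Y_0^*(u,s)$ (respectively $K_{\d,\Si_\Om}$). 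Equality in $\LI$ then forces $\D(t)\bigl[K_{\d,\Si}(t,u)-K_{\d,\Si_\Om}(t,u)\bigr]\D(u)=0$ for Lebesgue-a.e.\ $(t,u)\in\cI\times\cI$ and every $\d$. Absolute definiteness now enters in a purely measure-theoretic way: since $\mu_1(X_\D)>0$, the product $X_\D\times X_\D$ has positive measure, so one can select $(t_0,u_0)$ with $\D(t_0),\D(u_0)$ invertible and the above equality holding for all rational intervals $\d$ simultaneously. Stripping the invertible weights $\D(t_0),\D(u_0)$ and the invertible factors $Y_0(t_0,s),Y_0^*(u_0,s)$ forces the densities of $\Si$ and $\Si_\Om$ (with respect to a common dominating measure) to coincide $\s$-a.e., hence $\Si=\Si_\Om$. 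The hypothesis is thus used to pass from a $\D$-weighted identity to an unweighted one at a single well-chosen pair of points --- a kernel argument, not an abstract equivalence one.
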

\begin{proof}
(1) Assume that $\wt T$ is a linear relations in the Hilbert space $\wt \gH\supset \gH$. Let $F(\cd)$ be the spectral function of $\Tmi$ induced by $\wt T$. By using \eqref{4.15} and the Stieltjes-Liv\u{s}ic inversion formula one proves the equality \eqref{4.10.6} for $\Si(\cd)=\Si_\Om(\cd)$ in the same way as Theorem 4 in \cite{Sht57}.

Next assume that  $\gH$ and $\wt\gH$ are decomposed as in \eqref{4.10.1} and \eqref{4.10.3} respectively. It follows from \eqref{4.10.6} and \eqref{2.7} that for any $\wt f\in\gH_b$ one has $\wh f\in\lS$ and $||\wh f||_{\lS}=||P_{\wt\gH_0}\wt f||_{\wt\gH}\leq ||\wt f||_\gH$. Hence the operator $V_b\wt f:=\pi_\Si\wh f,\; \wt f\in\gH_b,$ admits a continuation to an operator $V\in [\gH,\LS]$ satisfying
\begin {equation}\label{4.17}
||V\wt f||_{\LS}= ||P_{\wt\gH_0}\wt f||_{\wt\gH}, \quad \wt f\in\gH.
\end{equation}
It follows from \eqref{4.17}, \eqref{4.10.3} and the inclusion $\gH_0\subset\wt\gH_0$ that $V\wt f=0,\; \wt f\in\mul\Tmi,$ and $||V\wt f||_{\LS}=||\wt f||_{\wt\gH}=||\wt f||_{\gH_0}, \; \wt f\in\gH_0$. Thus $V$ is a partial isometry with $\ker V=\mul\Tmi$ and, consequently, $\Si(\cd)=\Si_\Om(\cd)$ is a pseudospectral function of the system \eqref{3.1}. Moreover, $F(\cd)$ satisfies  \eqref{4.10.6}, so that $\wt T=\wt T_\Si$.

(2) Now assume that system \eqref{3.1} is absolutely definite and show that in this case each  pseudospectral function $\Si(\cd)$ satisfying $\wt T=\wt T_\Si$ coincides with $\Si_\Om(\cd)$. So, let a pseudospectral function $\Si(\cd)$ of the system \eqref{3.1} satisfies \eqref{4.10.6}, let $V_\Si$ be the corresponding Fourier transform and let $E_\Si$ be spectral measure \eqref{2.11}. Then by \eqref{4.10.6} for each finite interval $\d=[\a,\b)\subset\bR$ one has
\begin {equation}\label{4.20}
F(\b)-F(\a)=V_\Si^* E_\Si(\d)V_\Si
\end{equation}
and Proposition \ref{pr4.3} yields
\begin {equation}\label{4.21}
(F(\b)-F(\a))\wt f=\pi_\D \left ( \int_\d Y_0(\cd,s)d\Si(s) \wh f(s)\right ), \quad  \d=[\a,\b)\subset\bR, \;\; \wt f\in\gH_b.
\end{equation}
Substituting \eqref{4.1} into \eqref{4.21} and then using the Fubini theorem one can easily show that
\begin {equation}\label{4.22}
(F(\b)-F(\a))\wt f=\pi_\D \left (\int_\cI K_{\d,\Si}(\cd,u) \D (u) f(u)\, du \right ), \quad \d=[\a,\b)\subset\bR, \;\; \wt f\in\gH_b, \;\; f\in\wt f,
\end{equation}
where
\begin {equation}\label{4.23}
K_{\d,\Si}(t,u)=\int_\d Y_0(t,s)d \Si(s) Y_0^*(u,s), \quad t,u\in\cI.
\end{equation}
Let $K_{\d,\Si_\Om}(t,u)$ be given by \eqref{4.23} with $\Si(s)=\Si_\Om (s)$ and let $K_\d(t,u)=K_{\d,\Si}(t,u)-K_{\d,\Si_\Om}(t,u), \;t,u\in\cI$. It follows from Theorem \ref{th2.8} that there exist a scalar measure $\s$ on $\cB$ and functions $\Psi,\Psi_\Om:\bR\to [\bH]$ such that
\begin {equation}\label{4.23a}
\Si(\b)-\Si(\a)=\int_\d \Psi(s)\, d\s(s)\;\;\text{and}\;\; \Si_\Om(\b)-\Si_\Om(\a)=\int_\d \Psi_\Om(s)\, d\s(s)
\end{equation}
for any finite  $\d=[\a,\b)$. Let $\wt\Psi (s)=\Psi(s)-\Psi_\Om(s)$. Then in view of \eqref{4.23} one has
\begin {equation}\label{4.24}
K_\d(t,u)=\int_\d Y_0(t,s) \wt \Psi(s) Y_0^*(u,s)\, d\s(s), \quad t,u\in\cI,\;\; \d=[\a,\b)\subset\bR.
\end{equation}
Since $\Si_\Om(\cd)$ also satisfies \eqref{4.10.6}, the equality \eqref{4.22} holds with $K_{\d,\Si_\Om}$ in place of $K_{\d,\Si}$. Hence
\begin {equation}\label{4.25}
\pi_\D\left (\int_\cI K_{\d}(\cd,u) \D (u) f(u)\, du \right )=0, \quad \d=[\a,\b)\subset\bR, \;\; f\in\lI, \;\; \pi_\D f\in\gH_b.
\end{equation}
Denote by $F$ ($F'$) the set of all finite intervals $\d=[\a,\b)\subset \bR$ (resp. $\d'=[\a',\b')\subset \cI$) with rational endpoints. Moreover, let $\{e_j\}_1^n$ be a basis in $\bH$. It follows from \eqref{4.25} that for any $\d\in F, \; \d'\in F'$ and $e_j$ there exists a Borel set $B=B(\d,\d', e_j)\subset \cI$ such that $\mu_1(\cI\setminus B)=0$ and
\begin {equation}\label{4.26}
\int_{\d'} \D(t) K_\d(t,u)\D(u)e_j\, du=0, \quad t\in B.
\end{equation}
For each $\d\in F$ put
\begin {equation}\label{4.27}
\wt K_\d(t,u)=\D(t) K_\d(t,u)\D(u)=\int_\d \D(t)Y_0(t,s)\wt\Psi (s) Y_0^*(u,s) \D(u)\, d\s(s)
\end{equation}
and let $B_\d=\{\{t,u\}\in \cI\times\cI: \wt K_\d(t,u)=0\}, \; B_0=\bigcap\limits_{\d\in F}B_\d$. It follows from \eqref{4.26} that $\mu_2(\cI\times\cI\setminus B_\d)=0,\; \d\in F,$ and hence $\mu_2(\cI\times\cI\setminus B_0)=0$ (here $\mu_2$ is the Lebesgue measure on $\cI\times\cI$). Let $X_\D=\{t\in\cI: \D(t)\;\; \text{is invertible}\}$. Since system \eqref{3.1} is absolutely definite, it follows that $\mu_1 (X_\D)>0$. Hence $\mu_2(X_\D\times X_\D)>0$ and, consequently, $(X_\D\times X_\D)\cap B_0\neq \emptyset$. Therefore there exist $t_0$ and $u_0$ in $I$ such that the operators $\D(t_0)$ and $\D(u_0)$ are invertible and the equality
\begin{equation*}
\wt K_\d(t_0,u_0)=\int_\d \D(t_0)Y_0(t_0,s)\wt\Psi (s) Y_0^*(u_0,s) \D(u_0)\, d\s(s)=0
\end{equation*}
holds for all $\d\in F$. Hence $\D(t_0)Y_0(t_0,s)\wt\Psi (s) Y_0^*(u_0,s) \D(u_0)=0 $ ($\s$-a.e. on $\bR$) and invertibility of $Y_0(t_0,s)$ and $Y_0^*(u_0,s)$ yields $\wt\Psi (s) =0$ ($\s$-a.e. on $\bR$). Thus $\Psi(s)=\Psi_\Om(s)$ and by \eqref{4.23a} $\Si(s)=\Si_\om(s)$.
 \end{proof}
The above results show that in the case of the absolutely definite system \eqref{3.1} the equality $\wt T=\wt T_\Si$ gives a bijection between all pseudospectral functions $\Si(\cd)$ and all exit space extensions $\wt T\in\Sel$. The inverse bijection $\Si=\Si_{\wt T}$ is characterized by the following theorem, which is implied immediately by Proposition  \ref{pr4.12}, Theorem \ref{th4.16} and Corollary \ref{cor4.13}.
\begin{theorem}\label{th4.17}
Let system \eqref{3.1} be absolutely definite. Then the equalities \eqref{4.15} and \eqref{4.16} give a bijective correspondence $\Si(\cd)=\Si_{\wt T}(\cd)$ between all extensions $\wt T\in\Sel$ and all pseudospectral functions $\Si(\cd)$. More precisely, let $\wt T\in\Sel$, let $R(\cd)=R_{\wt T}(\cd)$ be the  generalized resolvent of $\Tmi$  induced by $\wt T$, let $\Om(\cd)=\Om_{\wt T}(\cd)$ be the characteristic matrix corresponding to $R_{\wt T}(\cd)$ and let $\Si_{\wt T}(\cd)$ be the spectral function of $\Om_{\wt T}(\cd)$. Then  $\Si_{\wt T}(\cd)$
is a pseudospectral function of the system \eqref{3.1}. Conversely, for each pseudospectral  function $\Si(\cd)$ of the system \eqref{3.1} there exists a unique (up to equivalence) $\wt T\in\Sel$ such that $\Si(\cd)=\Si_{\wt T}(\cd)$.

Moreover, $V_{0,\Si}$ is a unitary operator from $\gH_0$ onto $\LS$ if and only if $n_+(\Tmi)=n_-(\Tmi)$ and $\Si(\cd)=\Si_{\wt T}$ with $\wt T\in\Selo$.
\end{theorem}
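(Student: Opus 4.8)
The plan is to assemble Theorem~\ref{th4.17} from the three preceding results, Theorem~\ref{th4.16}, Proposition~\ref{pr4.12} and Corollary~\ref{cor4.13}; the only genuinely new point is to check that the two maps appearing in the statement are mutually inverse. Throughout I will use that, by Theorem~\ref{th4.14}, the characteristic matrix of a generalized resolvent is uniquely determined by that resolvent, and that, by Proposition~\ref{pr2.6}, the generalized resolvent $R_{\wt T}(\cd)$ depends only on the equivalence class of $\wt T\in\wt{\rm Self}(\Tmi)$.

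First I would fix $\wt T\in\Sel$ and follow the chain \eqref{4.15}, \eqref{4.16}: the extension $\wt T$ induces a generalized resolvent $R_{\wt T}(\cd)$, which by Theorem~\ref{th4.14} determines a unique characteristic matrix $\Om_{\wt T}(\cd)$, and the Stieltjes formula \eqref{4.16} produces the distribution function $\Si_{\wt T}(\cd):=\Si_{\Om_{\wt T}}(\cd)$. By the remarks above this assignment $\wt T\mapsto\Si_{\wt T}$ is well defined on equivalence classes in $\Sel$, and Theorem~\ref{th4.16} yields that $\Si_{\wt T}(\cd)$ is a pseudospectral function of \eqref{3.1} with $\wt T=\wt T_{\Si_{\wt T}}$ in the sense of Definition~\ref{def4.10}. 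In particular $\Si\mapsto\wt T_\Si$ is surjective onto $\Sel$ and $\wt T\mapsto\Si_{\wt T}$ is a right inverse of it.

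For the reverse composition I would take an arbitrary pseudospectral function $\Si(\cd)$ of \eqref{3.1}. By Proposition~\ref{pr4.12} there is a unique (up to equivalence) extension $\wt T\in\Sel$ satisfying $\wt T=\wt T_\Si$. Applying the previous step to this $\wt T$ gives a pseudospectral function $\Si_{\wt T}(\cd)$ with $\wt T=\wt T_{\Si_{\wt T}}$, so both $\Si$ and $\Si_{\wt T}$ are pseudospectral functions associated with the same extension $\wt T$. Here the hypothesis of absolute definiteness enters: by the uniqueness clause of Theorem~\ref{th4.16}, $\Si_{\wt T}$ is the only pseudospectral function $\Si'$ satisfying $\wt T=\wt T_{\Si'}$, hence $\Si_{\wt T}=\Si$. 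Combined with $\wt T_{\Si_{\wt T}}=\wt T$ from the first step, this shows that $\Si\mapsto\wt T_\Si$ and $\wt T\mapsto\Si_{\wt T}$ are mutually inverse bijections between the pseudospectral functions of \eqref{3.1} and $\Sel$, i.e.\ $\Si(\cd)=\Si_{\wt T}(\cd)$ sets up exactly the correspondence claimed in the first two paragraphs of the statement.

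The unitarity criterion is then a transcription of Corollary~\ref{cor4.13}: for a pseudospectral function $\Si(\cd)$ with associated extension $\wt T=\wt T_\Si\in\Sel$, that corollary says that $V_{0,\Si}$ maps $\gH_0$ unitarily onto $\LS$ precisely when $n_+(\Tmi)=n_-(\Tmi)$ and $\wt T_\Si\in\Selo$, and in that case $\wt T_0$ and $\L_\Si$ are unitarily equivalent via $V_{0,\Si}$; rewriting the condition ``$\wt T_\Si\in\Selo$'' as ``$\Si(\cd)=\Si_{\wt T}(\cd)$ for some $\wt T\in\Selo$'' under the above bijection gives the last assertion. I do not anticipate any real obstacle here: the argument is essentially a bookkeeping of the cited statements, and the only points requiring care are tracking the ``unique up to equivalence'' qualifier for $\wt T$ and invoking absolute definiteness exactly where the uniqueness clause of Theorem~\ref{th4.16} is used.
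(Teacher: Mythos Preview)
Your proposal is correct and follows essentially the same route as the paper: the paper states that Theorem~\ref{th4.17} ``is implied immediately by Proposition~\ref{pr4.12}, Theorem~\ref{th4.16} and Corollary~\ref{cor4.13}'', and you have simply spelled out this implication in detail, correctly noting that absolute definiteness is used precisely through the uniqueness clause of Theorem~\ref{th4.16}.
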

Next, combining the results of this subsection with Assertion \ref{ass4.8} one gets the following theorem.
\begin{theorem}\label{th4.18}
The set of spectral functions of the system \eqref{3.1} is not empty if and only if $\mul\Tmi=\{0\}$. If this condition is satisfied, then the set of spectral functions coincides with the set of pseudospectral functions and hence  Proposition \ref{pr4.12},  Theorems \ref{th4.16}, \ref{th4.17} and Corollary \ref{cor4.13} hold with the following replacements: the  phrase  ''spectral function(s)'' instead of ''pseudospectral function(s)''; the Hilbert space $\gH$, the operator $\wt T$ and the isometry $V_\Si$ in place of $\gH_0,\; \wt T_0$ and $V_{0,\Si}$ respectively.
\end{theorem}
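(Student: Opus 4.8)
The plan is to derive the statement from Proposition~\ref{pr4.5} and Theorem~\ref{th4.16}, and then to re-read the earlier results under the identifications forced by $\mul\Tmi=\{0\}$; no new analytic input is required.

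I would first treat the implication ``a spectral function exists $\Rightarrow\mul\Tmi=\{0\}$''. Let $\Si(\cd)$ be a spectral function. Since $\gH_b$ is dense in $\gH$ (each $f\in\lI$ is the $\|\cd\|_\D$-limit of its restrictions to $[a,\b)$, whose classes lie in $\gH_b$) and $\|\pi_\Si\wh f\|_{\LS}=\|\wh f\|_{\lS}=\|\wt f\|_\gH$ for $\wt f\in\gH_b$, the map $V_b\wt f:=\pi_\Si\wh f$ ($\wt f\in\gH_b$) extends to an isometry $V_\Si\in[\gH,\LS]$; thus $\Si(\cd)$ is a $q$-pseudospectral function (an isometry being a partial isometry) with $\ker V_\Si=\{0\}$, and Proposition~\ref{pr4.5} gives $\mul\Tmi\subset\ker V_\Si=\{0\}$.

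For the converse, assume $\mul\Tmi=\{0\}$. Then $\Tmi$ is a closed symmetric operator in $\gH$ and hence admits a minimal exit-space self-adjoint extension $\wt T$ that is again an operator; since $\mul\wt T=\{0\}=\mul\Tmi$ we get $\wt T\in\Sel$, so $\Sel\neq\emptyset$. By Theorem~\ref{th4.16}, the spectral function $\Si_\Om(\cd)$ of the characteristic matrix of the generalized resolvent generated by $\wt T$ is a pseudospectral function; in particular the set of pseudospectral functions is non-empty. If now $\Si(\cd)$ is any pseudospectral function then $\ker V_\Si=\mul\Tmi=\{0\}$ by Definition~\ref{def4.6}, so $V_\Si$ is an isometry and $\|\wh f\|_{\lS}=\|\pi_\Si\wh f\|_{\LS}=\|V_\Si\wt f\|_{\LS}=\|\wt f\|_\gH$ for every $\wt f\in\gH_b$; thus $\Si(\cd)$ is a spectral function. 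Hence the set of spectral functions is non-empty and every pseudospectral function is spectral, while by Assertion~\ref{ass4.8} every spectral function is pseudospectral, so the two classes coincide.

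It then remains to transport the earlier results. Under $\mul\Tmi=\{0\}$ the decomposition \eqref{4.10.1} reads $\gH_0=\gH$, every $\wt T\in\Sel$ is an operator so that $\wt T_0=\wt T$ and the distribution function $F_0$ of \eqref{4.10.4} coincides with the spectral function $F$ of $\Tmi$ generated by $\wt T$, and $V_{0,\Si}=V_\Si$ by \eqref{4.10.2}; inserting these identifications (and replacing ``pseudospectral'' by ``spectral'') throughout Proposition~\ref{pr4.12}, Theorems~\ref{th4.16} and~\ref{th4.17} and Corollary~\ref{cor4.13} reproduces the asserted versions verbatim. The only point calling for (standard) care is the claim that a closed symmetric operator possesses a minimal exit-space self-adjoint extension that is still an operator; granted this, the proof is essentially a packaging of Proposition~\ref{pr4.5}, Theorem~\ref{th4.16} and Assertion~\ref{ass4.8}.
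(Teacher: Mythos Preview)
Your proof is correct and follows essentially the same route as the paper, which merely states that the theorem is obtained by ``combining the results of this subsection with Assertion~\ref{ass4.8}''; you have supplied the details of that combination accurately. The one point you flag---that a closed symmetric operator always admits a minimal exit-space self-adjoint extension which is again an operator (equivalently, $\Sel\neq\emptyset$ when $\mul\Tmi=\{0\}$)---is indeed the only nontrivial input, and it is a known fact from the extension theory underlying the paper (see the references \cite{DM00,DM09,Mog13.2} cited for Theorem~\ref{th2.12.3}); the paper takes it for granted as well.
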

\begin{remark}\label{rem4.19}
For a not necessarily absolutely definite system Theorem \ref{th4.17} and the last statement of Theorem \ref{th4.16} could be easily obtained from Theorem 1 in \cite{LanTex84} applied to the directing mapping \eqref{4.9a}. For this purpose it would be needed one of the statements of the mentioned Theorem 1, which is not proved in  \cite{LanTex84} (namely, uniqueness of a spectral function $V$ of $\langle S;\Phi \rangle$ for a given extension $\wt S=\wt S^*$ of $S$, where the notations are taken from \cite{LanTex84}). In fact, we do not know whether Theorem \ref{th4.17} and the last statement of Theorem \ref{th4.16} are valid for not absolutely definite systems \eqref{3.1}.
\end{remark}
\section{Parametrization of pseudospectral and spectral functions}
In the following we suppose that the assumptions (A1) and (A2) from Subsection \ref{sub3.3} are satisfied.
\begin{definition}\label{def5.1}
Let $\cH_0$ and $\cH_1$ be finite dimensional Hilbert spaces \eqref{3.14}. Then a boundary parameter $\tau$ is a collection $\pair\in\wt R_+(\cH_0,\cH_1)$ of the form \eqref{2.15}.
\end{definition}
In the case of equal deficiency indices $n_+(\Tmi)=n_-(\Tmi)$ one has
\begin{equation}\label{5.0}
\wt\cH_b=\cH_b, \qquad \cH_0=\cH_1=:\cH=H_0\oplus\cH_b
\end{equation}
 and a boundary parameter is an operator pair
$\tau\in\wt R(\cH)$ defined by \eqref{2.16}. If in addition $\tau\in\wt R^0(\cH)$, then a boundary parameter will be called self-adjoint. Such a boundary parameter $\tau$ admits the representation as a self-adjoint operator pair \eqref{2.17}.

Let $\pair$ be a boundary parameter \eqref{2.15}. For a given function $f\in\lI$ consider the boundary problem
\begin {gather}
J y'-B(t)y = \l \D(t) y+\D(t)f(t), \quad t\in\cI\label{5.1}\\
C_0(\l)\G_0'y-C_1(\l)\G_1'y=0, \;\;\l\in\bC_+;\qquad D_0(\l)\G_0'y-D_1(\l)\G_1'y=0, \;\;\l\in\bC_-,\label{5.2}
\end{gather}
where $\G_0'y\in\cH_0$ and $\G_1'y\in\cH_1$ are defined by \eqref{3.13.1} and \eqref{3.13.2}. A function $y(\cd,\cd):\cI\tm (\CR)\to\bH$ is called a solution of this problem if for each $\l\in\CR$ the function $y(\cd,\l)$ belongs to $\AC\cap\lI$ and satisfies the equation \eqref{5.1} a.e. on $\cI$ (so that $y\in\dom\tma$) and the boundary conditions \eqref{5.2}.

If $n_+(\Tmi)=n_-(\Tmi)$ and $\tau$ is a boundary parameter
\eqref{2.16}, then \eqref{5.2} takes the form
\begin {gather}\label{5.3}
C_0(\l)\G_0'y-C_1(\l)\G_1' y = 0, \quad \l\in\CR.
\end{gather}
If in addition $\tau$ is a self-adjoint boundary parameter \eqref{2.17}, then \eqref{5.3} turns into  a self-adjoint boundary condition
\begin {gather}\label{5.4}
C_0\G_0' y-C_1 \G_1' y = 0.
\end{gather}
Observe also that in our paper \cite{Mog14} the boundary conditions \eqref{5.2}--\eqref{5.4} were represented in a more compact form.
\begin{theorem}\label{th5.2}
Let $\pair$ be a boundary parameter \eqref{2.15}. Then for every $f\in\lI$ the boundary problem \eqref{5.1},  \eqref{5.2} has a unique solution $y(t,\l)=y_f(t,\l) $ and the equality
\begin {equation*}
R(\l)\wt f = \pi_\D(y_f(\cd,\l)), \quad \wt f\in \LI, \quad f\in\wt f, \quad \l\in\CR
\end{equation*}
defines a generalized resolvent $R(\l)=:R_\tau(\l)$ of $\Tmi$. Conversely, for each generalized resolvent $R(\l)$ of $\Tmi$ there exists a unique boundary parameter $\tau$ such that $R(\l)=R_\tau(\l)$.

If in addition $n_+(\Tmi)=n_-(\Tmi)$, then   the above statements hold with the boundary parameter $\tau$ of the form \eqref{2.16} and
the boundary condition \eqref{5.3} in place of \eqref{5.2}. Moreover,
$R_\tau(\l)$ is a canonical resolvent of $\Tmi$ if and only if $\tau$ is a self-adjoint boundary parameter \eqref{2.17}. In this case
$R_\tau(\l)=(\wt T^\tau - \l)^{-1}$, where
\begin {equation}\label{5.5}
\wt T^\tau=\{\{\wt y,\wt f\}\in\Tma: C_0\G_0'y-C_1\G_1'y=0\}.
\end{equation}
\end{theorem}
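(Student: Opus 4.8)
The plan is to deduce everything from the abstract parametrization of generalized resolvents by boundary value problems for $\Tma=\Tmi^*$ (Theorem~\ref{th2.12.1} together with Remark~\ref{rem2.12.2}), applied to the decomposing boundary triplet $\Pi_+=\bta$ of Proposition~\ref{pr3.5}. That triplet has $\cH_0=H_0\oplus\wt\cH_b$, $\cH_1=H_0\oplus\cH_b$, and by \eqref{3.15} its boundary mappings act as $\G_0\{\wt y,\wt f\}=\G_0'y$ and $\G_1\{\wt y,\wt f\}=\G_1'y$ for $\{\wt y,\wt f\}\in\Tma$, where $y\in\AC\cap\lI$ is the function associated with $\{\wt y,\wt f\}$ by Remark~\ref{rem3.3}. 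The first step is a dictionary between the abstract problem \eqref{2.19}--\eqref{2.21} (taken with $A=\Tmi$ and $g=\wt f:=\pi_\D f$) and the concrete problem \eqref{5.1}, \eqref{5.2}: the inclusion $\{h,\l h+\wt f\}\in\Tma$ is, by the definition of $\Tma$ and Remark~\ref{rem3.3}, equivalent to the existence of a unique $y\in\AC\cap\lI$ with $\pi_\D y=h$ and $J y'-B(t)y=\l\D(t)y+\D(t)f(t)$ a.e.\ on $\cI$, which is \eqref{5.1}; and since $\G_j\{h,\l h+\wt f\}=\G_j'y$, the abstract conditions \eqref{2.20}, \eqref{2.21} become exactly \eqref{5.2}.

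With this dictionary the direct statement is immediate. For each $\wt f\in\LI$ and $\l\in\CR$, Theorem~\ref{th2.12.1} provides a unique $h=h(\wt f,\l)\in\LI$ solving \eqref{2.19}--\eqref{2.21}; the function $y_f(\cd,\l)\in\AC\cap\lI$ attached to $h$ by Remark~\ref{rem3.3} then solves \eqref{5.1}, \eqref{5.2}, and it is the unique solution, because any other solution $\hat y\in\AC\cap\lI$ produces $\{\pi_\D\hat y,\l\pi_\D\hat y+\wt f\}\in\Tma$ satisfying \eqref{2.20}, \eqref{2.21}, whence $\pi_\D\hat y=h=\pi_\D y_f(\cd,\l)$ and then $\hat y=y_f(\cd,\l)$ by the uniqueness clause of Remark~\ref{rem3.3}. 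Thus $R(\l)\wt f:=\pi_\D(y_f(\cd,\l))$ coincides with the abstract resolvent $R_\tau(\l)\wt f$ of Theorem~\ref{th2.12.1} and is a generalized resolvent of $\Tmi$. The converse is read off in the same way: by the converse part of Theorem~\ref{th2.12.1} every generalized resolvent of $\Tmi$ equals $R_\tau(\l)$ for a unique $\tau\in\wt R_+(\cH_0,\cH_1)$, and the dictionary identifies this $R_\tau(\l)$ with $\pi_\D(y_f(\cd,\l))$.

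When $n_+(\Tmi)=n_-(\Tmi)$, Proposition~\ref{pr3.5} and Remark~\ref{rem2.13} show that $\Pi_+$ is an ordinary boundary triplet $\Pi=\bt$ with $\cH=H_0\oplus\cH_b$ and Nevanlinna Weyl function, so that $\wt R_+(\cH,\cH)=\wt R(\cH)$, a boundary parameter is a Nevanlinna pair \eqref{2.16} defined on all of $\CR$, and the pair of conditions \eqref{5.2} collapses into the single condition \eqref{5.3}; the first part of the theorem specializes accordingly. Finally, by the standard theory of ordinary boundary triplets (cf.\ Remark~\ref{rem2.13} and \cite{DM91,Mal92}), $R_\tau(\l)$ is a canonical resolvent precisely when $\tau\in\wt R^0(\cH)$, i.e.\ when $\tau$ is the self-adjoint boundary parameter \eqref{2.17}; in that case the generating extension is the canonical self-adjoint extension $\wt T^\tau$ of \eqref{5.5}, and $R_\tau(\l)=(\wt T^\tau-\l)^{-1}$.

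The only delicate point is the passage between equivalence classes in $\LI$ and genuine absolutely continuous solutions: the abstract machinery yields only $h\in\LI$, whereas \eqref{5.1}, \eqref{5.2} is a pointwise boundary value problem, so Remark~\ref{rem3.3} must be invoked carefully both to manufacture the function $y_f(\cd,\l)$ and to secure its uniqueness as a function rather than merely as a class. Beyond that, nothing new about the system is needed --- all the analytic content has been absorbed into the decomposing boundary triplet of Proposition~\ref{pr3.5}.
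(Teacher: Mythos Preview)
Your proof is correct and follows exactly the paper's approach: reduce the concrete boundary problem \eqref{5.1}, \eqref{5.2} to the abstract one \eqref{2.19}--\eqref{2.21} via the decomposing boundary triplet of Proposition~\ref{pr3.5}, and then invoke Theorem~\ref{th2.12.1}. The paper's proof is a two-line sketch of precisely this reduction; your version merely spells out the dictionary (including the role of Remark~\ref{rem3.3}) and the specialization to equal deficiency indices more carefully.
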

\begin{proof}
Let $\Pi_+=\bta$ be the decomposing boundary triplet \eqref{3.14}, \eqref{3.15} for $\Tma$. It follows from \eqref{3.15} and \eqref{3.13.1}, \eqref{3.13.2} that the boundary problem \eqref{5.1}, \eqref{5.2} is equivalent to \eqref{2.19}--\eqref{2.21}.  Now applying  Theorem \ref{th2.12.1}  we arrive at the required statements.
\end{proof}
According to Theorem \ref{th5.2} the boundary problem \eqref{5.1}, \eqref{5.2} induces a bijective correspondence $R(\l)=R_\tau(\l)$ between boundary parameters $\tau$ and generalized resolvents $R(\l)$ of $\Tmi$. In the following we denote by $\wt T^\tau (\in \wt{\rm Self}(\Tmi))$ the extension of $\Tmi$ generating $R_\tau(\cd)$ and by $\Om_\tau(\cd)$ the characteristic matrix corresponding to $R_\tau(\cd)$. Clearly, the equalities $\wt T=\wt T^\tau $ and $\Om(\cd)=\Om_\tau(\cd)(=\Om_{\wt T^\tau }(\cd))$ gives a parametrization of all extensions $\wt T\in \wt{\rm Self}(\Tmi)$ and all characteristic matrices $\Om(\cd)$ of the system \eqref{3.1} respectively by means of a boundary parameter $\tau$.

Let $\cH_0$ and $\cH_1$ be given by \eqref{3.14} and let $\cH_2:=\cH_0\ominus\cH_1(=\wt\cH_b\ominus\cH_b)$, so that $\cH_0=\cH_1\oplus\cH_2$. Denote by $P_j$ the orthoprojector in $\cH_0$ onto $\cH_j, \; j\in\{1,2\}$.

Next, assume that $M_+(\cd):\bC_+\to [\cH_0,\cH_1]$  is the operator function \eqref{3.18}--\eqref{3.20}  (this means that $M_+(\cd)$ is the Weyl functions of the decomposing boundary triplet for $\Tma$) and let  $\pair$ be a boundary parameter \eqref{2.15}.  It follows from Theorem \ref{th2.12.3} that there exist the limits $\cB_\tau$ and $\wh\cB_\tau$ of the form \eqref{2.26} and \eqref{2.27}.
\begin{definition}\label{def5.3}
A boundary parameter $\tau $ will be called admissible if $\cB_\tau=\wh\cB_\tau=0$.
\end{definition}
The following assertions are immediate from the results of \cite{Mog13.2}:

(i) If $\lim\limits_{y\to \infty} \tfrac 1 {i y} M_+(iy)\up \cH_1=0$,
then $\tau$ is admissible if and only if $\cB_\tau=0$.

(ii) Every boundary parameter is admissible if and only if $\mul\Tmi=\mul\Tma$ (see Assertion \ref{ass3.2}, (2)) or  equivalently, if and  only if  $\lim\limits_{y\to \infty} \tfrac 1 {i y} M_+(iy)\up \cH_1=0 $ and
\begin {equation}\label{5.13}
\lim_{y\to +\infty}y \left (\im (M_+(i y)h_0,h_0)_{\cH_0}+\tfrac 1
2 ||P_2 h_0 ||^2\right )=+\infty, \quad h_0\in \cH_0, \quad
h_0\neq 0.
\end{equation}
In the following theorem we describe all pseudospectral functions of the system \eqref{3.1} in terms of the boundary parameter $\tau$.
\begin{theorem}\label{th5.4}
Let system \eqref{3.1} be absolutely definite and let $n_-(\Tmi)\leq n_+(\Tmi)$. Assume also that   $M_+(\cd)$ is the operator function \eqref{3.18}--\eqref{3.20} and let
\begin {gather}
\Om_0(\l)=\begin{pmatrix} m_0(\l) & -\tfrac 1 2  I_{H,H_0}\cr -\tfrac 1 2
P_{H_0,H} & 0\end{pmatrix}:\underbrace {H_0\oplus H}_{\bH}\to \underbrace{
H_0\oplus H}_{\bH}, \quad \l\in\CR\label{5.14}\\
S_1(\l)=\begin{pmatrix} m_0(\l)-\tfrac i 2 P_{\wh H} & M_{2+}(\l) \cr
-P_{H_0,H} & 0 \end{pmatrix}:\underbrace{H_0\oplus
\wt\cH_b}_{\cH_0}\to\underbrace{ H_0\oplus H}_{\bH},\quad \l\in\bC_+ \nonumber\\
S_2(\l)=\begin{pmatrix} m_0(\l)+\tfrac i 2 P_{\wh H} & -I_{H,H_0}\cr M_{3+}(\l) & 0\end{pmatrix}:\underbrace{ H_0\oplus H}_{\bH}\to\underbrace{H_0\oplus
\cH_b}_{\cH_1}, \quad \l\in\bC_+ ,\nonumber
\end{gather}
where $P_{H_0,H}\in [H_0,H]$ is the orthoprojector in $H_0$ onto $H$, $I_{H,H_0}\in [H,H_0]$ is the embedding operator of $H$ into $H_0$ and $P_{\wh H}\in [H_0]$ is the orthoprojector in $H_0$ onto $\wh H$ (see \eqref{3.0.1}). Then the  equalities
\begin {gather}
\Om_\tau(\l)=\Om_0(\l)+S_1(\l)(C_0 (\l)- C_1 (\l)M_+(\l))^{-1}
C_1(\l)S_2(\l), \quad \l\in\bC_+\label{5.17}\\
\Si(s)=\Si_\tau(s)=\lim\limits_{\d\to+0}\lim\limits_{\varepsilon\to +0} \frac 1 \pi
\int_{-\d}^{s-\d}\im \,\Om_\tau(\s+i\varepsilon)\, d\s.\label{5.18}
\end{gather}
establish a bijective correspondence between all admissible boundary parameters $\pair$ defined by \eqref{2.15} and all pseudospectral functions $\Si(\cd)=\Si_\tau(\cd)$ of the system \eqref{3.1}.
\end{theorem}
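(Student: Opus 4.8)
The statement splits into two parts: the bijectivity of $\tau\mapsto\Si_\tau$, and the explicit formula \eqref{5.17}. The plan is to obtain bijectivity by composing results already proved, and to derive \eqref{5.17} from the Krein--Naimark formula \eqref{2.22} by an explicit computation of the characteristic matrix of $R_\tau(\cd)$.

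For the bijectivity: by Theorem \ref{th5.2} the boundary problem \eqref{5.1}, \eqref{5.2} gives a bijection $\tau\leftrightarrow\wt T^\tau$ between boundary parameters and extensions $\wt T\in\wt{\rm Self}(\Tmi)$, and $\Om_\tau(\cd)$ is the characteristic matrix of $R_\tau(\cd)=R_{\wt T^\tau}(\cd)$ (Theorem \ref{th4.14}). Applying Theorem \ref{th2.12.3}(2) to the decomposing boundary triplet $\Pi_+=\bta$ of Proposition \ref{pr3.5} (with $A=\Tmi$), one obtains $\mul\wt T^\tau=\mul\Tmi$ if and only if $\cB_\tau=\wh\cB_\tau=0$, i.e. if and only if $\tau$ is admissible; hence the bijection $\tau\leftrightarrow\wt T^\tau$ restricts to a bijection between admissible boundary parameters and the class $\Sel$. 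Since \eqref{3.1} is absolutely definite, Theorem \ref{th4.17} supplies a bijection $\wt T\leftrightarrow\Si_{\wt T}$ between $\Sel$ and the pseudospectral functions, with $\Si_{\wt T}$ the Stieltjes transform \eqref{4.16} of $\Om_{\wt T}$. Composing the two bijections and using $\Om_{\wt T^\tau}=\Om_\tau$ (so that $\Si_{\wt T^\tau}$ is exactly the function \eqref{5.18}), one gets that $\tau\mapsto\Si_\tau$ is a bijection from admissible boundary parameters onto pseudospectral functions; in particular \eqref{5.18} always defines a pseudospectral function, since $\wt T^\tau\in\Sel$ for admissible $\tau$ and Theorem \ref{th4.16} applies.

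It remains to verify \eqref{5.17}. For $\l\in\bC_+$ formula \eqref{2.22} reads $R_\tau(\l)=(A_0-\l)^{-1}-\g_+(\l)(\tau_+(\l)+M_+(\l))^{-1}\g_-^*(\ov\l)$, where $A_0=\ker\G_0$ and $\g_\pm$ are the $\g$-fields of $\Pi_+$. Three ingredients are needed. First, using $Y_0(a,\l)=I_\bH$, the normalisation of $v_0(\cd,\l)$, $u(\cd,\l)$ in Proposition \ref{pr3.6}(1) and the definition \eqref{3.13.1} of $\G_0'$ together with \eqref{3.15}, one checks that $\g_+(\l)h=\pi_\D\bigl(Y_0(\cd,\l)S_1(\l)h\bigr)$, where $S_1(\l)=\bigl(v_0(a,\l),u(a,\l)\bigr)$ coincides, by \eqref{3.19}, \eqref{3.20}, with the matrix in \eqref{5.14}. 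Second, representing the $\g$-field $\g_-$ on $\bC_-$ through boundary values of the corresponding $L^2_\D$-solutions (cf. Remark \ref{rem3.7}) and passing to adjoints, one obtains $\g_-^*(\ov\l)\pi_\D f=S_2(\l)\int_\cI Y_0^*(t,\ov\l)\D(t)f(t)\,dt$ with $S_2(\l)$ the matrix in \eqref{5.14}; here the identity $M_+^*(\ov\l)=M_-(\l)$ is used to re-express the $\bC_-$-data through $v_0(a,\l)$. Third, a variation-of-parameters computation, solving \eqref{5.1} under the boundary condition $\G_0'y=0$, shows that $(A_0-\l)^{-1}$ admits the representation \eqref{4.15} with diagonal part $\Om_0(\l)$ of \eqref{5.14} and off-diagonal term $\tfrac 1 2\,{\rm sgn}(t-x)J$. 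Since moreover $0\in\rho(\tau_+(\l)+M_+(\l))$ and $\tau_+(\l)=\{(C_0(\l),C_1(\l));\cH_0\}$, a short computation with linear relations gives $(\tau_+(\l)+M_+(\l))^{-1}=-(C_0(\l)-C_1(\l)M_+(\l))^{-1}C_1(\l)$. Substituting all this into \eqref{2.22} expresses $R_\tau(\l)$ as the integral operator \eqref{4.15} whose diagonal is $\Om_0(\l)+S_1(\l)(C_0(\l)-C_1(\l)M_+(\l))^{-1}C_1(\l)S_2(\l)$; by the uniqueness of the characteristic matrix (Theorem \ref{th4.14}) this equals $\Om_\tau(\l)$, which is \eqref{5.17}, and then \eqref{5.18} is just \eqref{4.16} applied to $\Om_\tau$.

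The bijectivity part is routine once the quoted theorems are aligned; the main obstacle is the computation behind \eqref{5.17}. Within it the delicate steps are the two ``dual'' identifications: obtaining $\g_-^*(\ov\l)$, equivalently $S_2(\l)$, from the $\bC_-$ side of the decomposing triplet, where the relevant Weyl solutions are not the $v_0,u$ of Proposition \ref{pr3.6}(1) but must be tied to them through $M_+^*(\ov\l)=M_-(\l)$; and pinning down the exact off-diagonal kernel $\tfrac 1 2\,{\rm sgn}(t-x)J$ in the representation of $(A_0-\l)^{-1}$, which forces a careful treatment of the $L^2_\D$-behaviour at the possibly singular endpoint $b$.
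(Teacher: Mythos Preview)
Your bijectivity argument is essentially identical to the paper's: both compose Theorem~\ref{th5.2} (bijection $\tau\leftrightarrow\wt T^\tau$), Theorem~\ref{th2.12.3}(2) applied to the decomposing triplet $\Pi_+$ (so that $\wt T^\tau\in\Sel$ iff $\tau$ is admissible), and Theorem~\ref{th4.17} (bijection $\wt T\leftrightarrow\Si_{\wt T}$ under absolute definiteness).

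The only difference is in the treatment of formula~\eqref{5.17}. The paper does \emph{not} derive it here: it simply cites \cite[Theorem~4.6]{Mog14}, where the representation of $\Om_\tau$ in terms of $\Om_0,S_1,S_2,M_+$ is established. You instead sketch a direct derivation from the Krein--Naimark formula~\eqref{2.22}: identify $\g_+(\l)h=\pi_\D(Y_0(\cd,\l)S_1(\l)h)$ via $v_0(a,\l),u(a,\l)$ and \eqref{3.19}; obtain $\g_-^*(\ov\l)$ through the $\bC_-$-solutions of Remark~\ref{rem3.7} and the relation $M_+^*(\ov\l)=M_-(\l)$; compute $(A_0-\l)^{-1}$ by variation of parameters; and convert $(\tau_+(\l)+M_+(\l))^{-1}$ to $-(C_0(\l)-C_1(\l)M_+(\l))^{-1}C_1(\l)$. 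This is the natural route and is presumably close to what \cite{Mog14} actually does; it buys self-containment at the cost of the two computations you yourself flag as delicate (the $\g_-^*$ identification and the kernel of $(A_0-\l)^{-1}$). Within this paper, however, \eqref{5.17} is treated as an imported fact, so your extra work is not required for the proof as written.
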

\begin{proof}
As it was mentioned in the proof of Theorem \ref{th5.2}, the problem \eqref{5.1}, \eqref{5.2} can be represented in terms of the decomposing boundary triplet $\Pi_+$ for $\Tma$ as \eqref{2.19}-\eqref{2.21}. Hence the parametrization of generalized resolvents $R(\l)=R_\tau(\l)$ and the corresponding extensions $\wt T=\wt T^\tau\in\wt{\rm Self}(\Tmi)$ coincides with the parametrization of the same objects in terms of the triplet $\Pi_+$ given in Theorem \ref{th2.12.1}. Therefore according to Theorem \ref{th2.12.3} $\wt T^\tau\in\Sel$ if and only if $\tau$ is admissible. Moreover, the parametrization of all characteristic matrices $\Om_\tau(\cd)=\Om _{\wt T^\tau}(\cd)$ in the form \eqref{5.17} was obtained in \cite[Theorem 4.6]{Mog14}. Combining these facts with Theorem \ref{th4.17} we arrive at the required statement.
\end{proof}
Assume now that $\Tmi $ has equal deficiency indices $n_+(\Tmi)=n_-(\Tmi)$. Then \eqref{5.0} holds and the equalities \eqref{3.21}--\eqref{3.23} define a (Nevanlinna) operator function $M(\cd)$ (the Weyl function of the ordinary decomposing boundary triplet $\Pi=\bt$ for $\Tma$). Observe also that in this case:

(1) a boundary parameter $\tau\in\wt R(\cH)$ is defined by \eqref{2.16} and the equalities \eqref{2.26} and \eqref{2.27} take a simpler form \eqref{2.33} and \eqref{2.34}.

(2) the condition \eqref{5.13} turns into
\begin {gather*}
\lim_{y\to \infty}y \cd\im (M(i y)h,h)=+\infty, \quad h\in \cH, \quad
h\neq 0.
\end{gather*}
\begin{theorem}\label{th5.5}
Let system \eqref{3.1} be absolutely definite and let $n_+(\Tmi)=n_-(\Tmi)$. Moreover, let $M(\cd)$ be given by \eqref{3.21}--\eqref{3.23}, let $\Om_0(\cd)$ be defined by \eqref{5.14} and let
\begin {equation*}
S(\l)=\begin{pmatrix} m_0(\l)-\tfrac i 2 P_{\wh H} & M_{2}(\l) \cr
-P_{H_0,H} & 0 \end{pmatrix}:\underbrace{H_0\oplus
\cH_b}_{\cH}\to\underbrace{ H_0\oplus H}_{\bH},\quad \l\in\CR.
\end{equation*}
Then the equality
\begin {equation}\label{5.20}
\Om_\tau(\l)=\Om_0(\l)+S(\l)(C_0 (\l)- C_1(\l) M(\l))^{-1}
C_1(\l)S^*(\ov\l), \quad \l\in\CR
\end{equation}
together with the Stieljes formula \eqref{5.18} establishes a bijective correspondence between all admissible boundary parameters $\tau$ of the form \eqref{2.16} and all pseudospectral functions $\Si(\cd)=\Si_\tau(\cd)$ of the system.

Moreover, $V_{0,\Si}(\in [\gH_0,\LS])$ is a unitary operator  if and only if $\tau$ is a self-adjoint (admissible) boundary parameter. If this condition is satisfied, then equality \eqref{5.5} defines an extension $\wt T^\tau\in\Selo$ and the operators $\wt T_0^\tau$ (the operator part of $\wt T^\tau$) and $\L_\Si $ are unitarily equivalent by means of $V_{0,\Si}$.
\end{theorem}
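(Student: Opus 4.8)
The plan is to obtain Theorem~\ref{th5.5} as the specialization of Theorem~\ref{th5.4} to the case of equal deficiency indices. First I would record the simplifications that occur when $n_+(\Tmi)=n_-(\Tmi)$: by Proposition~\ref{pr3.6} one has $\wt\cH_b=\cH_b$, hence $\cH_0=\cH_1=:\cH=H_0\oplus\cH_b$, $\cH_2=\{0\}$, $P_1=I_\cH$, $P_2=0$; the decomposing boundary triplet $\Pi_+$ becomes the ordinary boundary triplet $\Pi=\bt$ for $\Tma$; its Weyl function $M_+(\cd)$ coincides with the Nevanlinna function $M(\cd)$ of the form \eqref{3.21}--\eqref{3.23}; the solutions $v_0(\cd,\l)$ and $u(\cd,\l)$ of Proposition~\ref{pr3.6} are defined for all $\l\in\CR$; and a boundary parameter $\tau$ takes the form \eqref{2.16}, so the limits of Definition~\ref{def5.3} reduce to \eqref{2.33}, \eqref{2.34}. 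Observe also that the operator function $S(\l)$ in the statement is exactly $S_1(\l)$ of \eqref{5.14} with $M_{2+}(\l)$ replaced by $M_2(\l)$.

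The key step is the identity $S_2(\l)=S^*(\ov\l)$, $\l\in\CR$, where $S_2(\l)$ is the block operator from \eqref{5.14}. Since $M(\cd)$ is a Nevanlinna operator function, $M^*(\ov\l)=M(\l)$, which, read off blockwise from \eqref{3.21}, gives $m_0^*(\ov\l)=m_0(\l)$ and $M_2^*(\ov\l)=M_3(\l)$; together with $P_{H_0,H}^*=I_{H,H_0}$ and $(\tfrac i 2 P_{\wh H})^*=-\tfrac i 2 P_{\wh H}$, a direct computation of the $2\times2$ block adjoint of $S(\ov\l)$ shows that $S^*(\ov\l)$ and $S_2(\l)$ coincide as operators from $\bH=H_0\oplus H$ to $\cH_1=\cH=H_0\oplus\cH_b$. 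Consequently \eqref{5.17} becomes $\Om_\tau(\l)=\Om_0(\l)+S(\l)(C_0(\l)-C_1(\l)M(\l))^{-1}C_1(\l)S^*(\ov\l)$ for $\l\in\bC_+$; and since $\Om_\tau(\cd)$ is a Nevanlinna function (Theorem~\ref{th4.14}) while, using $\Om_0^*(\ov\l)=\Om_0(\l)$, $M^*(\ov\l)=M(\l)$ and the relation between the representing operators of $\tau(\l)$ and $\tau(\ov\l)=\tau(\l)^*$, the right-hand side obeys the same reflection, the equality \eqref{5.20} extends from $\bC_+$ to all of $\CR$ (it is the equal-index form of the formula from \cite[Theorem~4.6]{Mog14} used in the proof of Theorem~\ref{th5.4}). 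Now Theorem~\ref{th5.4}, whose hypotheses hold here, yields that \eqref{5.20} together with \eqref{5.18} establishes the asserted bijective correspondence between admissible boundary parameters $\tau$ of the form \eqref{2.16} and pseudospectral functions $\Si(\cd)=\Si_\tau(\cd)$.

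For the last statement I would combine three already-established facts. By Theorem~\ref{th4.17}, $V_{0,\Si}\in[\gH_0,\LS]$ is unitary if and only if $n_+(\Tmi)=n_-(\Tmi)$ and $\Si(\cd)=\Si_{\wt T}$ with $\wt T\in\Selo$; since equal deficiency indices are assumed and $\Si=\Si_\tau$ corresponds to $\wt T=\wt T^\tau$ under the bijection, this means $\wt T^\tau\in\Selo$. On the other hand, $\wt T^\tau\in\Sel$ precisely when $\tau$ is admissible (this is used in the proof of Theorem~\ref{th5.4} via Theorem~\ref{th2.12.3}), while by Theorem~\ref{th5.2} the resolvent $R_\tau(\cd)$ is canonical, i.e.\ $\wt T^\tau\in{\rm Self}(\Tmi)$, exactly when $\tau$ is a self-adjoint boundary parameter \eqref{2.17}, in which case $\wt T^\tau$ is given by \eqref{5.5}. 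Since $\Selo=\Sel\cap{\rm Self}(\Tmi)$, it follows that $V_{0,\Si}$ is unitary if and only if $\tau$ is a self-adjoint admissible boundary parameter, and then \eqref{5.5} defines $\wt T^\tau\in\Selo$; the unitary equivalence of its operator part $\wt T_0^\tau$ and $\L_\Si$ by means of $V_{0,\Si}$ is then exactly Corollary~\ref{cor4.13}.

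The main obstacle is the bookkeeping in the key step: one must keep the block decompositions of $S(\l)$, $S_2(\l)$ and of $M(\cd)$ consistent with the splittings $\bH=H\oplus\wh H\oplus H$, $H_0=H\oplus\wh H$, $\cH=H_0\oplus\cH_b$, so that $S_2(\l)=S^*(\ov\l)$ is verified as an honest identity between operators on the correctly identified spaces $\bH\to\cH$ and $\cH\to\bH$, and one must make sure the passage of \eqref{5.20} from $\bC_+$ to $\CR$ is legitimate. Once this is settled, the remainder is a direct assembly of Theorems~\ref{th5.4}, \ref{th4.17}, \ref{th5.2} and Corollary~\ref{cor4.13}.
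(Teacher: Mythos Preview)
Your proposal is correct and follows essentially the same route as the paper: specialize Theorem~\ref{th5.4} to the equal-index case to get the bijection, and then combine the last statements of Theorems~\ref{th4.17} and~\ref{th5.2} (together with Corollary~\ref{cor4.13}) for the remaining assertions. The only difference is one of presentation: where the paper invokes \cite[Theorem~4.9]{Mog14} as a black box to pass from \eqref{5.17} to \eqref{5.20}, you carry out this reduction by hand, verifying the identity $S_2(\l)=S^*(\ov\l)$ from the Nevanlinna relation $M^*(\ov\l)=M(\l)$ (blockwise $m_0^*(\ov\l)=m_0(\l)$, $M_2^*(\ov\l)=M_3(\l)$, $P_{H_0,H}^*=I_{H,H_0}$) and then extending from $\bC_+$ to $\CR$ via the reflection $\Om_\tau^*(\ov\l)=\Om_\tau(\l)$. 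Your argument is thus a self-contained unpacking of the citation rather than a genuinely different method; the extra care you flag about matching the block decompositions of $\bH=H_0\oplus H$ and $\cH=H_0\oplus\cH_b$ is exactly the bookkeeping hidden inside \cite[Theorem~4.9]{Mog14}.
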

\begin{proof}
According to \cite[Theorem 4.9]{Mog14} in the case $n_+(\Tmi)=n_-(\Tmi)$  equality \eqref{5.17} admits the representation \eqref{5.20}. This and Theorem \ref{th5.4} yield the first statement. Moreover, combining the last statements of Theorems \ref{th4.17} and \ref{th5.2} one obtains other statements of the theorem.
\end{proof}
The following corollary is immediate from Theorem \ref{th4.18}.
\begin{corollary}\label{cor5.6}
If $\mul\Tmi=\{0\}$, then Theorems  \ref{th5.4} and \ref{th5.5} are valid for spectral functions $\Si(\cd)$ (instead of pseudospectral ones). Moreover, in this case equality \eqref{5.5} defines the operator $\wt T^\tau$ and the last statements of Theorem \ref{th5.5} hold with $V_\Si$ and $\wt T^\tau$ in place of $V_{0,\Si}$ and $\wt T_0^{\tau}$ respectively.
\end{corollary}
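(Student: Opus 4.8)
The plan is to read the statement off directly from Theorem~\ref{th4.18}, applied to Theorems~\ref{th5.4} and~\ref{th5.5}, once one unwinds what the hypothesis $\mul\Tmi=\{0\}$ does to the objects occurring there.

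First I would invoke Theorem~\ref{th4.18}: since $\mul\Tmi=\{0\}$, the set of spectral functions of~\eqref{3.1} (Definition~\ref{def4.7}) coincides with the set of pseudospectral functions. Hence the bijective correspondences between admissible boundary parameters $\tau$ and pseudospectral functions $\Si_\tau(\cd)$ furnished by Theorems~\ref{th5.4} and~\ref{th5.5}---given by~\eqref{5.14},~\eqref{5.17} (resp.~\eqref{5.20}) together with the Stieltjes formula~\eqref{5.18}---become verbatim bijective correspondences between admissible boundary parameters and spectral functions. This already yields the first assertion of the corollary; nothing in the construction of $\Om_\tau(\cd)$ or $\Si_\tau(\cd)$ changes.

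For the ``moreover'' part I would record the elementary consequences of $\mul\Tmi=\{0\}$: by~\eqref{4.10.1} one has $\gH_0=\gH$, hence by~\eqref{4.10.2} $V_{0,\Si}=V_\Si$; by~\eqref{4.10.3} $\wt\gH_0=\wt\gH$; and every $\wt T\in\Sel$ satisfies $\mul\wt T=\mul\Tmi=\{0\}$, so $\wt T$ is an operator coinciding with its operator part. In particular, for an admissible $\tau$ the extension $\wt T^\tau$ is an operator, and for a self-adjoint admissible $\tau$ the canonical extension~\eqref{5.5} is an operator $\wt T^\tau\in\Selo$ (this is precisely the replacement prescribed by Theorem~\ref{th4.18}, together with the last statement of Theorem~\ref{th5.2}). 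Substituting $V_\Si$, $\gH$ and $\wt T^\tau$ for $V_{0,\Si}$, $\gH_0$ and $\wt T_0^\tau$ in the last assertions of Theorem~\ref{th5.5} then gives exactly what is claimed: $V_\Si$ is a unitary operator from $\gH$ onto $\LS$ if and only if $\tau$ is a self-adjoint (admissible) boundary parameter, and in that case $\wt T^\tau$ and $\L_\Si$ are unitarily equivalent by means of $V_\Si$.

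Since every step is a substitution into results proved earlier, there is no genuine obstacle here; the only point deserving a moment's care is to verify that the identifications $\gH_0=\gH$, $V_{0,\Si}=V_\Si$ and $\wt T_0^\tau=\wt T^\tau$ forced by $\mul\Tmi=\{0\}$ are mutually consistent with the replacements dictated by Theorem~\ref{th4.18}, so that the transfer from the pseudospectral to the spectral setting is legitimate.
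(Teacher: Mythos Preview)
Your proposal is correct and follows exactly the paper's approach: the paper declares the corollary ``immediate from Theorem~\ref{th4.18}'' with no further argument, and your write-up simply spells out the substitutions $\gH_0=\gH$, $V_{0,\Si}=V_\Si$, $\wt T_0^\tau=\wt T^\tau$ that make this immediacy explicit.
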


\end{document}